\def\NZQ{\Bbb}               % the font for N,Z,Q,R,C
\def\NN{{\NZQ N}}
\def\ZZ{{\NZQ Z}}
\def\AA{{\NZQ A}}
\def\frk{\frak}               % font for "Fraktur"
\def\mm{{\frk m}}
\def\Phi{{\frk n}}
\def\Phi{{\frk N}}
\def\MA{{\mathcal A}}
\def\MB{{\mathcal B}}
\def\MP{{\mathcal P}}
\def\MQ{{\mathcal Q}}
\def\MK{{\mathcal K}}
\def\MF{{\mathcal F}}
\def\MY{{\mathcal Y}}
\def\ab{{\bold a}}
\def\bb{{\bold b}}
\def\cb{{\bold c}}
\def\eb{{\bold e}}
\def\opn#1#2{\def#1{\operatorname{#2}}} % to make operators
\opn\chara{char} \opn\length{\ell} \opn\pd{pd} \opn\rk{rk}
\opn\projdim{proj\,dim} \opn\injdim{inj\,dim} \opn\rank{rank}
\opn\depth{depth} \opn\grade{grade} \opn\height{height}
\opn\embdim{emb\,dim} \opn\codim{codim}
\opn\Tr{Tr} \opn\bigrank{big\,rank}
\opn\superheight{superheight}\opn\lcm{lcm}
\opn\trdeg{tr\,deg}%\emph{
\opn\reg{reg} \opn\lreg{lreg} \opn\ini{in} \opn\lpd{lpd}
\opn\size{size}\opn\bigsize{bigsize}
\opn\cosize{cosize}\opn\bigcosize{bigcosize}
\opn\sdepth{sdepth}\opn\sreg{sreg}
\opn\link{link}\opn\fdepth{fdepth}
\opn\index{index}
\opn\index{index}
\opn\indeg{indeg}
\opn\N{N}
\opn\SSC{SSC}
\opn\SC{SC}
\opn\lk{lk}
\opn\div{div} \opn\Div{Div} \opn\cl{cl} \opn\Cl{Cl}
\opn\Spec{Spec} \opn\Supp{Supp} \opn\supp{supp} \opn\Sing{Sing}
\opn\Ass{Ass} \opn\Min{Min}\opn\Mon{Mon} \opn\dstab{dstab} \opn\astab{astab}
\opn\Syz{Syz}
\opn\reg{reg}
\opn\Ann{Ann} \opn\Rad{Rad} \opn\Soc{Soc}
\opn\Im{Im} \opn\Ker{Ker} \opn\Coker{Coker} \opn\Am{Am}
\opn\Hom{Hom} \opn\Tor{Tor} \opn\Ext{Ext} \opn\End{End}\opn\Der{Der}
\opn\Aut{Aut} \opn\id{id}
\opn\nat{nat}
\opn\pff{pf}%   \pf exists already
\opn\Pf{Pf} \opn\GL{GL} \opn\SL{SL} \opn\mod{mod} \opn\ord{ord}
\opn\Gin{Gin} \opn\Hilb{Hilb}\opn\sort{sort}
\opn\initial{init}
\opn\ende{end}
\opn\height{height}
\opn\type{type}
\opn\aff{aff} \opn\con{conv} \opn\relint{relint} \opn\st{st}
\opn\lk{lk} \opn\cn{cn} \opn\core{core} \opn\vol{vol}
\opn\link{link} \opn\star{star}\opn\lex{lex}
\opn\gr{gr}
\def\pot#1#2{#1[\kern-0.28ex[#2]\kern-0.28ex]}
\opn\dirlim{\underrightarrow{\lim}}
\opn\inivlim{\underleftarrow{\lim}}
\let\union=\cup
\let\sect=\cap
\let\dirsum=\oplus
\let\tensor=\otimes
\let\iso=\cong
\let\Union=\bigcup
\let\to=\rightarrow
\def\Implies{\ifmmode\Longrightarrow \else
        \unskip${}\Longrightarrow{}$\ignorespaces\fi}
\def\implies{\ifmmode\Rightarrow \else
        \unskip${}\Rightarrow{}$\ignorespaces\fi}
\def\iff{\ifmmode\Longleftrightarrow \else
        \unskip${}\Longleftrightarrow{}$\ignorespaces\fi}
\newtheorem{Theorem}{Theorem}[section]
 \newtheorem{Lemma}[Theorem]{Lemma}
 \newtheorem{Corollary}[Theorem]{Corollary}
 \newtheorem{Proposition}[Theorem]{Proposition}
 \newtheorem{Example}[Theorem]{Example}
 \newtheorem{Examples}[Theorem]{Examples}
\let\epsilon\varepsilon
\let\kappa=\varkappa
\def\qed{\ifhmode\textqed\fi
      \ifmmode\ifinner\quad\qedsymbol\else\dispqed\fi\fi}
\def\textqed{\unskip\nobreak\penalty50
       \hskip2em\hbox{}\nobreak\hfil\qedsymbol
       \parfillskip=0pt \finalhyphendemerits=0}
\def\dispqed{\rlap{\qquad\qedsymbol}}
\opn\dis{dis}
\def\pnt{{\raise0.5mm\hbox{\large\bf.}}}
\opn\Lex{Lex}
\begin{document}

\title{Algebraically rigid simplicial complexes and graphs}
\author{Klaus Altmann, Mina Bigdeli, J\"urgen Herzog and Dancheng Lu}

\address{Klaus Altmann, Institut f\"ur Mathematik, Freie Universit\"at Berlin,
Arnimallee 3, D-14195 Berlin, Germany}
\email{altmann@math.fu-berlin.de}

\address{Mina Bigdeli, Department of Mathematics,  Institute for Advanced Studies in Basic Sciences (IASBS), Gava zang,
45195-1159 Zanjan, 45137-66731 Iran}
\email{m.bigdelie@iasbs.ac.ir}

%\urladdr{http://secondauthorwebaddress}
%\thanks{The second author ... thanks}
%%%%%%%%

\address{J\"urgen Herzog, Fakult\"at f\"ur Mathematik, Universit\"at Duisburg-Essen,
45117 Essen, Germany}
\email{juergen.herzog@gmail.com}

\address{Dancheng Lu, Department of Mathematics, Soochow University, 215006 Suzhou, P.R.China}
\email{ludancheng@suda.edu.cn}

\keywords{infinitesimal deformations, rigidity, Stanley-Reisner ideals, edge ideals}

\subjclass[2010]{Primary 13D10, 13C13; Secondary 05E40.}

\thanks{The paper was written while the second and the fourth authors were visiting the Department of
Mathematics of University Duisburg-Essen. They want to express their thanks for the hospitality.}
\thanks{Corresponding author: Dancheng Lu}

\begin{abstract}
We call a  simplicial complex algebraically rigid if its Stanley-Reisner ring admits no nontrivial infinitesimal deformations, and call it inseparable if it does not allow any deformation  to other simplicial complexes. Algebraically rigid   simplicial complexes are inseparable. In this paper we study inseparability and rigidity of Stanley-Reisner rings, and apply the general theory to letterplace ideals as well as to edge ideals of graphs. Classes of algebraically rigid simplicial complexes and graphs are identified.
\end{abstract}

\maketitle

\section*{Introduction}

In the study of monomial ideals it is a popular technique to polarize in order to obtain squarefree monomial ideals.  Let $S=K[x_1,\ldots,x_n]$ be the polynomial ring in $n$ indeterminates over the field $K$. Given a monomial ideal $I\subseteq S$, the polarized ideal $I^\wp$ of $I$ is a squarefree monomial ideal defined in a larger polynomial ring $S^\wp$ and $S/I$ is obtained from $S^\wp/I^\wp$ by reduction modulo a regular sequence  of linear forms consisting of differences of variables, see \cite[page 19]{HHBook} for details.  In other words, $S^\wp/I^\wp$ may be viewed  an unobstructed deformation of $S/I$ over a suitable  affine space.

 The natural question arises whether $S^\wp/I^\wp$ or any other $K$-algebra defined by a squarefree monomial ideal admits further unobstructed deformations, or at least non-trivial infinitesimal deformations. This may  be indeed the case as the third author learned from Fl{\o}ystad.
\medskip

{\it\noindent Separation.} Let $I\subseteq S$ be  a squarefree  monomial ideal, and let $y$ be an indeterminate over $S$. Fl{\o}ystad (see \cite{FGH}) calls a monomial  ideal $J\subseteq S[y]$  a {\em separation} of $I$ for the variable $x_i$ if  the following conditions hold:
\begin{enumerate}
\item[(i)] the ideal $I$ is the image of $J$ under  the $K$-algebra homomorphism $S[y]\to S$ with $y\mapsto x_i$ and $x_j\mapsto x_j $ for all $1\leq j\leq n$;

\item[(ii)] $x_i$ and $y$ divide some minimal generators of $J$;

\item[(iii)] $y-x_i$ is a non-zero divisor of $S[y]/J$.
\end{enumerate}
The ideal $I$ is called {\em separable}  if it admits a separation, otherwise it is called {\em inseparable}.

If $J\subseteq S[y]$ is a separation of $I$ for the variable $x_i$, then $K[t]\to S[y]/J$, $t\mapsto y-x_i$,  is a non-trivial unobstructed deformation of $S/I$ over $\AA^1=\Spec K[t]$. The simplest example of a squarefree monomial ideal which admits a separation is the ideal $I=(x_1x_2,x_1x_3,x_2x_3)$. One possible separation of $I$ is the ideal $J=(x_1y,x_1x_3,x_2x_3)$.

\medskip
{\it\noindent Infinitesimal deformations.}  But even if $I\subseteq S$ is inseparable, it still may admit infinitesimal deformations. We denote by $\epsilon$ a non-zero term with $\epsilon^2=0$. Let $J\subseteq S[\epsilon]$ be  an ideal. Then $S[\epsilon]/J$ is called an {\em infinitisimal deformation} of $S/I$ if the canonical $K$-algebra homomorphism $K[\epsilon]\to S[\epsilon]/J$ is flat and if $S/I$ is obtained from  $S[\epsilon]/J$ by reduction modulo $\epsilon$.  Thus if $I=(f_1,\ldots,f_m)$, then $J=(f_1+g_1\epsilon,\ldots, f_m+g_m\epsilon)$ and $K[\epsilon]\to S[\epsilon]/J$ is flat if and only if $\varphi\: I\to S/I$  with  $f_i\mapsto g_i+I$ is a
well-defined $S$-module homomorphism. In other words, the infinitesimal deformations of $S/I$ are in bijection to the elements of $I^*=\Hom_S(I,S/I)$.

\medskip
{\it\noindent Cotangent functors.} Recall that a $K$-linear map $\partial\: S\to S$ is called a {\em $K$-derivation} if $\partial fg=f\partial g+g\partial f$. The set of $K$-derivations has a natural structure as an $S$-module, and is denoted $\Der_K(S)$. In fact, $\Der_K(S)$ is a free $S$-module whose basis consists of the partial derivatives $\partial/\partial x_i$.  The infinitesimal deformation $S[\epsilon]/J$ of $S/I$ is called {\em trivial} if there exists a $K$-algebra  automorphism $S[\epsilon]\to S[\epsilon]$ which maps  $J$ to $IS[\epsilon]$. This is the case, if and only if there exists $\partial\in \Der_K(S)$ such that   $J=(f_1+\partial f_1\epsilon, \ldots, f_m+\partial f_m\epsilon)$. Thus if we consider the natural map $\delta^*\: \Der_K(S)\to I^*$ which assigns to $\partial \in \Der_K(S)$ the element $\delta^*(\partial)$ with $\delta^*(\partial)(f_i)=\partial f_i+I$, then  the non-zero elements of  $\Coker \delta^*$ are in bijection to the isomorphism classes  of non-trivial infinitesimal deformations of $S/I$, see for example \cite[Lemma 2.4]{H}.  This cokernel is denoted by $T^1(S/I)$ and is called the {\em first cotangent functor} of $S/I$.  The cotangent functors have been first introduced by Lichtenbaum and Schlessinger \cite{LS}.  The $K$-algebra $S/I$ is called {\em rigid} if it does not admit any non-trivial infinitesimal deformations. Hence $S/I$ is rigid if and only if $T^1(S/I)=0$. For simplicity we call $I$  rigid if $S/I$ is rigid. The simplest example of a squarefree monomial  ideal which is inseparable but not rigid, is the ideal $I=(x_1x_2,x_2x_3,x_3x_4)$,  see Proposition~\ref{sufficient}(a) and Theorem~\ref{chordal}. We recommend   the reader to  consult \cite[Section 3]{SBook} for general basic facts about deformation theory.

\medskip
{\it \noindent Rigidity of simplicial complexes.}  In the case that $I\subseteq S$ is a monomial ideal, $T^1(S/I)$ is a $\ZZ^n$-graded $S$-module. If moreover $I$ is a squarefree monomial ideal, then  the $\ZZ^n$-graded components of $T^1(S/I)$ have a  combinatorial interpretation as was shown by the first author and Christophersen in \cite{AC1} and \cite{AC2}. We recall some of these results in Section 1 of this paper because they are crucial for later  applications.

 Recall that a simplicial complex $\Delta$ on the vertex set $V$ is a collection of subset of $V$ such that whenever $F\in \Delta$ and $G\subset F$, then $G\in \Delta$. Frequently we denote the vertex set of $\Delta$ by $V(\Delta)$.

Since $I$ is a squarefree monomial ideal, there exists a unique simplicial complex $\Delta$ such that $I=I_\Delta$, where $I_\Delta$ is the Stanley--Reisner ideal of $\Delta$. As usual, $S/I_\Delta$ is denoted by $K[\Delta]$ and is called the Stanley--Reisner ring of $\Delta$. For simplicity we will write $T^1(\Delta)$ for $T^1(K[\Delta])$. We  say that $\Delta$ is {\em algebraically rigid} (with respect to $K$) if $K[\Delta]$ is rigid. Actually it is shown in Section~\ref{rigidity}  that algebraic rigidity of simplicial complexes does not depend on $K$.  There is the concept of rigid simplicial complexes, meaning that the simplicial complex does not admit any non-trivial automorphism. In this paper rigidity means algebraic rigidity, and will simply say that $\Delta$ is rigid if it is algebraically rigid.

\medskip
{\it \noindent Description of contents.} The aim of this paper is to characterize  rigid simplicial complexes in combinatorial terms and exhibit classes of them. By what we said before,  it follows that $\Delta$ is rigid if  and only if $T^1(\Delta)_\cb=0$ for all $\cb\in \ZZ^n$. The  important facts, shown in \cite{AC1}, regarding the graded components of $T^1(\Delta)$ that will be used throughout the paper, are the following: write $\cb=\ab-\bb$ with $\ab,\bb\in\NN^{n}$ and $\supp \ab\sect \supp \bb=\emptyset$. Here for a vector $\ab$, $\supp \ab$ is defined to be the set $\{i\in [n]\:\; a_i\neq 0\}$ where the $a_i$ are the components of $\ab$. Then
\begin{enumerate}
\item[(i)]  $T^1(\Delta)_{\ab-\bb}=0$ if $\bb\not\in \{0,1\}^{n}$, and if $\bb\in \{0,1\}^{n}$, then $T^1(\Delta)_{\ab-\bb}$ depends only on $\supp\ab$ and $\supp\bb$;
\item[(ii)] $T^1(\Delta)_{\ab-\bb}=T^1(\link_\Delta \supp \ab)_{-\bb}$.
\end{enumerate}
We say that $\Delta$ is {\em $\emptyset$-rigid}  if  $T^1(\Delta)_{-\bb}=0$ for all $\bb\in \{0,1\}^n$. Thus, by (ii), $\Delta$ is rigid, if and only if all its links are $\emptyset$-rigid. These and other facts are recalled in Section~\ref{rigidity}. We close the section by applying the general theory to characterize inseparable simplicial complexes. Say, $\Delta$ is a simplicial complex on the vertex set $[n]$. To each vertex $i$ of $\Delta$ one attaches a graph $G_{\{i\}}(\Delta)$ whose vertices are those faces $F\in \Delta$ for which $F\union\{i\}\not\in  \Delta$. The edges of $G_{\{i\}}(\Delta)$  are those $\{F,G\}$ for which  $F\subsetneq G$ or $G\subsetneq F$. In Theorem~\ref{insep} we show that $\Delta$ is inseparable if and only if $G_{\{i\}}(\Delta)$ is connected for $i=1,\ldots, n$, and that this is equivalent to the condition that $T^1(\Delta)_{-\eb_i}=0$ for $i=1,\ldots,n$. Here $\eb_i$ denotes the $i$th canonical basis vector of $\ZZ^n$.

Let $\dim_K T^1(\Delta)_{-\eb_i}=k$. Iterating simple separation steps one can construct a simplicial complex $\widetilde{\Delta}$  on the vertex set $([n]\setminus\{i\})\union \{v_0,v_1,\ldots,v_k\}$ defined as $k$-separation of $\Delta$ for the vertex $i$ having the  property that $T^1(\widetilde{\Delta})_{-\bb}=0$ for all $\bb$ with $\supp \bb\subseteq \{v_0,\ldots,v_k\}$,  and such that $K[\Delta]$ is obtained from $K[\widetilde{\Delta}]$ by cutting down by a regular sequence consisting of differences of variables.

In Section ~\ref{joinsetc} we consider various operations on simplicial complexes and study their behaviour with respect to rigidity. In Proposition~\ref{join} it is shown that the join $\Delta_1*\Delta_2$ of the simplicial complexes $\Delta_1$ and $\Delta_2$ is rigid if and only if this is the case for $\Delta_1$ and $\Delta_2$.

More complicated is the situation for the disjoint union of two simplicial complexes $\Delta_1$ and $\Delta_2$. Here we assume that none of the two simplicial complexes is the empty set and that their $0$-dimensional faces correspond to their vertex set, a condition that we do not require  in general. Under these (very weak) assumptions it is shown in Theorem~\ref{union} that $\Delta_1\union \Delta_2$ is inseparable  if and only if $\Delta_1$ and $\Delta_2$ are simplices, and that $\Delta_1\union \Delta_2$ is rigid if and only in addition one  of the simplices has positive dimension. As a consequence we see that a disconnected simplicial complex of positive dimension is never rigid, unless all its components are simplices.

Finally in Theorem~\ref{circ} we consider what we call the {\em circ} of two simplicial complexes, denoted by $\Delta_1\circ \Delta_2$. Suppose that  $V_i$ is the vertex set of $\Delta_i$ and that $V_1\sect V_2=\emptyset$. Then, by definition,  $F\subseteq V_1\union V_2$ is a face of  $\Delta_1\circ \Delta_2$ if and only if  either   $F\cap V_1$ is a face of $\Delta_1$ or  $F\cap V_2$ is a face of $\Delta_2$. Note that $\Delta_1\circ \Delta_2=(\Delta_1*\langle V_2\rangle)\union (\langle V_1\rangle*\Delta_2)$ and that $I_{\Delta_1\circ \Delta_2}=(I_{\Delta_1}I_{\Delta_2})$.
It turns out that $\Delta_1$ and $\Delta_2$ are rigid if $\Delta_1\circ \Delta_2$ is rigid and $I_{\Delta_1}, I_{\Delta_2}\neq 0$.  The  converse is true only under some additional assumptions.

A motivation for us to study  the circ-operation resulted  from the desire to classify the rigid letterplace ideals, see \cite{FGH}. Given two finite posets $\MP$ and $\MQ$, one assigns a monomial ideal $L(\MP,\MQ)$, which in the case $\MP=[n]$ or $\MQ=[n]$  is called a letterplace ideal or a co-letterplace ideal, respectively. Letterplace  and  co-letterplace  ideals have been considered before
in \cite{EHM}. In the paper \cite{FGH} it is shown that all letterplace ideals are inseparable. Here we show that $L(\MP,\MQ)$ is rigid if and only if no two elements of $\MP$ are comparable, see Theorem~\ref{dan}. In the proof of one direction of this theorem we need the circ-construction.

The last section of this paper is concerned with the rigidity of edge ideals. Given a finite simple graph on the vertex set $[n]$ one assigns  to it the so-called edge ideal $I(G)$ generated by the monomials $x_ix_j$ with $\{i,j\}$ an edge of $G$. Obviously $I(G)=I_{\Delta(G)}$ for some simplicial complex $\Delta(G)$. This simplicial complex is called the {\em independence complex} of $G$. Indeed, its faces are the independent sets of $G$, that is, the subsets of $[n]$  which do not contain any edge of $G$. We say that $G$ is rigid if $\Delta(G)$ is rigid. Again there exist already various concepts of rigid graphs which should not be confused with the definition of rigidity used in this paper. Similarly, we say that $G$ is inseparable if $I(G)$ is inseparable. The ultimate goal would be to classify all rigid and inseparable graphs. It is not clear whether a nice description of these classes of  graphs is possible. However with some additional assumptions on the graphs, inseparable or rigid graphs can be  characterized combinatorially.
Recall that a vertex $i$ of $G$ is called a {\em free vertex} if it belongs to only one edge, and an edge is called a {\em leaf} if it has a free vertex. Finally an  edge $e$ of $G$ is called a {\em branch}, if there exists a leaf $e'$  with $e'\neq e$ such that $e\sect e'\neq \emptyset$. Our main result on rigidity of graphs is formulated in Theorem~\ref{chordal}: Let $G$ be a   graph on the vertex set $[n]$ such that  $G$ does not contain any induced cycle of length $4$, $5$ or $6$. Then $G$ is   rigid  if and only if each edge of $G$ is a branch and each vertex of a $3$-cycle of $G$  belongs to a leaf. Theorem~\ref{chordal}  has several  consequences. In Corollary~\ref{againchordal} it is shown that a chordal graph $G$ is   rigid if and only if each edge of $G$ is a branch and each vertex of a $3$-cycle of $G$  belongs to a leaf. Another consequence is the fact that a graph with the property that all cycles have length $\geq 7$ is rigid if and only if each of its edges is a branch, see Corollary~\ref{forest}. This result implies in particular that a forest consisting only of branches is rigid.   Finally we notice in Corollary~\ref{rigidcycle} that a cycle is rigid if and only if it is a $4$- or $6$-cycle.

\section{The cotangent functor $T^1$ and  rigid and inseparable  Stanley--Reisner rings}
\label{rigidity}

\noindent
{\it The cotangent functor $T^1$}.  Let $\Delta$ be a simplicial complex on the vertex set $V(\Delta)=[n]$ where $[n]=\{1,\ldots,n\}$.
We denote by $[\Delta]$ the set of elements $i\in [n]$ with $\{i\}\in \Delta$. Let $F_1,\ldots, F_m\subseteq [n]$. We denote by $\langle F_1,\ldots, F_m\rangle$ the smallest simplicial complex $\Delta$ with $F_i\in \Delta$ for $i=1,\dots, m$. The elements of $\Delta$ are called {\em faces}. A {\em facet} of $\Delta$ is a face of $\Delta$ which is maximal with respect to inclusion. The set of facets of $\Delta$ will be denoted by $\MF(\Delta)$.

 We fix a field $K$. The ideal $I_\Delta$ denotes the Stanley-Reisner ideal in $S=K[x_1,\ldots,x_n]$, that is, the ideal generated by the monomial $x_N$ with $N\subseteq [n]$ a non-face of $\Delta$. Here $x_N=\prod_{i\in N}x_i$. The $K$-algebra $K[\Delta]=S/I_\Delta$ is called the Stanley-Reisner ring of $\Delta$.

The cotangent cohomology modules $ T^i(K[\Delta])$ which we denote by
$T^i(\Delta)$ are $\ZZ^{n}$-graded.  We quote several facts about the $\ZZ^n$-graded components of $T^i(\Delta)$ which were shown in \cite{AC1}.

\medskip
We write $\cb\in \ZZ^{n}$ as $\ab-\bb$ with $\ab,\bb\in\NN^{n}$ and $\supp \ab\sect \supp \bb=\emptyset$, and  set  $A=\supp \ab$ and $B=\supp \bb$. Here $\NN$ denotes the set of non-negative integers, and as in the introduction the support of a vector $\ab \in \NN^n$ is defined to be the set $\supp \ab=\{i\in [n]\:\; a_i\neq 0\}$.

\begin{Theorem}[\cite{AC1}, Theorem 9]
\label{theorem9}
{\em (a)} $T^i(\Delta)_{\ab-\bb}=0$ if $\bb\not\in \{0,1\}^{n}$.

{\em (b)} Assuming $\bb\in \{0,1\}^{n}$, $T^i(\Delta)_{\ab-\bb}$ depends only  on $A$ and $B$.
\end{Theorem}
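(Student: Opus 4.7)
My proof plan proceeds by analyzing the $\ZZ^n$-graded pieces of the cochain complex that computes $T^i(\Delta)$. Realize $T^i(\Delta)$ as the $i$-th cohomology of $\Hom_S(L_\bullet, K[\Delta])$, where $L_\bullet$ is a semi-free $S$-module resolution of the cotangent complex $L_{K[\Delta]/K}$. Because $I_\Delta$ is generated by squarefree monomials, the terms $L_j$ may be chosen as free $S$-modules whose basis elements lie in squarefree multidegrees $\eb_N$, $N\subseteq [n]$. Concretely, one can start from the Taylor or Lyubeznik resolution of $I_\Delta$ and iterate the Lichtenbaum--Schlessinger construction, or invoke Hochster's formula for the minimal resolution and its higher analogs. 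This squarefree-multidegree support is the only combinatorial input one needs.

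For part (a), an $S$-linear map of degree $\cb=\ab-\bb$ from a summand $S(-\eb_N)$ of $L_j$ to $K[\Delta]$ sends the generator to a homogeneous element of multidegree $\eb_N+\ab-\bb$ in $K[\Delta]$. For a nonzero image the multidegree must lie in $\NN^n$, so $(\eb_N)_i+a_i-b_i\geq 0$ for all $i$. For any index $i\in \supp \bb$ we have $a_i=0$ by the hypothesis $\supp \ab\cap \supp \bb=\emptyset$, and $(\eb_N)_i\leq 1$. Hence if $\bb\notin \{0,1\}^n$, some $b_i\geq 2$ forces every such map to vanish on every generator of $L_\bullet$; the entire $(\ab-\bb)$-graded piece of the cochain complex is zero, so all $T^i(\Delta)_{\ab-\bb}=0$.

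For part (b), assume $\bb\in \{0,1\}^n$ and let $\ab,\ab'$ be vectors sharing the common support $A$. In each cohomological degree, the $(\ab-\bb)$-graded piece of the cochain complex has a $K$-basis indexed by those generators $\eb_N$ of $L_j$ for which $x^{\eb_N+\ab-\bb}$ represents a nonzero element of $K[\Delta]$. The nonvanishing conditions --- namely $\eb_N+\ab-\bb\in \NN^n$ and $\supp(\eb_N+\ab-\bb)=(N\cup A)\setminus B$ being a face of $\Delta$ --- depend only on the sets $A$, $B$, $N$, not on the numerical values of $\ab$. The differentials of $\Hom_S(L_\bullet, K[\Delta])$ are induced by monomial multiplications and combinatorial signs, and these again depend only on supports. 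Identifying the basis vector for $\eb_N$ in the $(\ab-\bb)$-piece with the one in the $(\ab'-\bb)$-piece then produces an isomorphism of cochain complexes, from which part (b) follows.

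The principal obstacle is the choice of a resolution of $L_{K[\Delta]/K}$ whose terms have basis elements in squarefree multidegrees and whose differentials obey the claimed support-only dependence. For $i=1$ this is essentially immediate from $T^1=\Coker(\Der_K(S)\to \Hom_S(I_\Delta, K[\Delta]))$ together with the squarefree generation of $I_\Delta$; for $i=2$ the explicit Lichtenbaum--Schlessinger construction from a squarefree presentation suffices; for higher $i$ a more careful invocation of the cotangent complex machinery is required, but in every case the squarefreeness of $I_\Delta$ is the single pivotal input.
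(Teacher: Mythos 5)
First, note that the paper does not prove this statement at all: it is quoted verbatim from Altmann--Christophersen \cite{AC1}, Theorem 9, so there is no in-paper argument to compare yours with. Judged on its own terms, your multidegree bookkeeping is sound wherever it applies, and for $i=1$ it is essentially a complete proof: $T^1(\Delta)_{\ab-\bb}$ sits inside $\Hom_S(F_1,K[\Delta])_{\ab-\bb}=\bigoplus_N K[\Delta]_{\eb_N+\ab-\bb}$ with $F_1$ free on the minimal non-faces $N$ (squarefree degrees); the summand for $N$ vanishes as soon as some $b_i\geq 2$ because $a_i=0$ and $(\eb_N)_i\leq 1$; and when $\bb\in\{0,1\}^n$ the nonvanishing conditions $B\subseteq N$ and $(N\cup A)\setminus B\in\Delta$, as well as the matrices of the multigraded differentials restricted to this degree, depend only on $A$ and $B$. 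This is the same flavour of argument as in the quoted source, and it covers everything the present paper actually uses.

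The genuine gap is the assertion that the cotangent complex $L_{K[\Delta]/K}$ admits a model whose terms are free $S$-modules with \emph{all} basis elements in squarefree multidegrees; for $i\geq 2$ that assertion is essentially the whole content of the theorem, and you do not prove it. It is not automatic: the Lichtenbaum--Schlessinger construction and any Tate-type resolution naturally introduce elements in non-squarefree degrees --- for instance the Koszul relation between $x_1x_2$ and $x_1x_3$ lives in degree $(2,1,1,0,\dots)$, and the variables adjoined to kill such cycles inherit those degrees. For $i=2$ the claim can be rescued, since the relation module of a squarefree monomial ideal is generated by the Taylor relations, whose degrees are the (squarefree) least common multiples, so $F_2$ may be chosen with squarefree-degree generators modulo the trivial relations; but for $i\geq 3$ the sentence ``a more careful invocation of the cotangent complex machinery is required'' is exactly where the proof is missing. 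Since the paper only ever invokes the $i=1$ case, your argument suffices for all applications made here, but as a proof of the theorem as stated it is incomplete.
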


Recall that for a subset $A$ of $[n]$, the {\em link} of $A$ is defined to be
\[
\link_\Delta A=\{F\in \Delta\:\; F\sect A=\emptyset, \; F\union A\in \Delta\}
\]
with vertex set $V(\link_\Delta A)=[n]\setminus A$.

\medskip
We will also need the following result:

\begin{Proposition}[\cite{AC1}, Proposition 11]
\label{Proposition11}
{\em (a)} $T^i(\Delta)_{\ab-\bb}=0$,  unless
\[
A\in \Delta\quad \mathrm{and} \quad  \emptyset\neq B\subseteq [\link_{\Delta}A].
\]

{\em (b)}  $T^i(\Delta)_{\ab-\bb}=T^i(\link_\Delta A)_{-\bb}$.
\end{Proposition}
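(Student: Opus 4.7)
The plan is to reduce both parts to two structural facts about $\ZZ^n$-graded cotangent cohomology of Stanley--Reisner rings: (i) compatibility of $T^i$ with localization, and (ii) direct monomial bookkeeping in the $\ZZ^n$-graded Lichtenbaum--Schlessinger complex. By Theorem~\ref{theorem9}, I may restrict to $\bb\in\{0,1\}^n$ throughout, and write $A=\supp\ab$, $B=\supp\bb$.

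First I would establish (b), because (a) will largely follow from it. For $A\in\Delta$, the monomial $x^A=\prod_{i\in A}x_i$ is a non-zerodivisor on $K[\Delta]$, and one has the canonical identification
$$
K[\Delta]\bigl[(x^A)^{-1}\bigr]\;\iso\;K[\link_\Delta A]\otimes_K K\bigl[x_i^{\pm 1}: i\in A\bigr].
$$
Since $T^i$ commutes with localization, and since the Laurent tensor factor is smooth over $K$ and $\ZZ^A$-graded with acyclic cotangent complex, the only effect of that factor is to absorb the $A$-coordinates of the grading freely. Hence the $\ZZ^n$-graded component of $T^i(\Delta)$ at degree $\ab-\bb$ with $\supp\ab\subseteq A$ and $\supp\bb\cap A=\emptyset$ is canonically isomorphic to the $\ZZ^{[n]\setminus A}$-graded component of $T^i(\link_\Delta A)$ at degree $-\bb$, which is exactly (b).

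Next I would dispatch the three vanishing conditions in (a). If $A\notin\Delta$, pick any minimal non-face $N\subseteq A$; then $x^N=0$ in $K[\Delta]$, and a direct inspection of the Lichtenbaum--Schlessinger cochains in multidegree $\ab-\bb$ shows each entry lands in a piece of $K[\Delta]$ annihilated by the vanishing of $x^{\ab}$, yielding $T^i(\Delta)_{\ab-\bb}=0$. If $A\in\Delta$ but some $j\in B$ satisfies $A\cup\{j\}\notin\Delta$, then $\{j\}$ is a non-face of $\link_\Delta A$, so $x_j=0$ in $K[\link_\Delta A]$; the negative weight at coordinate $j$ in $T^i(\link_\Delta A)_{-\bb}$ is killed by this relation, and (b) gives $T^i(\Delta)_{\ab-\bb}=0$. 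Finally, if $B=\emptyset$, then for any generator $x^N$ of $I_\Delta$ the image degree $N+\ab$ has support containing the non-face $N$, so $(K[\Delta])_{N+\ab}=0$; consequently every $S$-linear map $I_\Delta\to K[\Delta]$ of degree $\ab$ is zero, and the same grading argument propagates up the LS complex to kill all higher $T^i(\Delta)_{\ab}$.

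The main obstacle is making the degree-graded localization isomorphism in step (b) precise: one has to verify that the flat base change $K[\Delta]\to K[\Delta][(x^A)^{-1}]$ induces isomorphisms on every $\ZZ^n$-graded piece of $T^i$ in the indicated range, and that the Laurent factor really contributes trivially in every relevant grading (the subtle point being that the $A$-coordinates of the grading become gauge, allowing $\ab$ to be shifted to $0$). Once this bookkeeping is secured, the remaining vanishings in (a) reduce to routine monomial computations.
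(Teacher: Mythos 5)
The first thing to say is that the paper does not prove this statement at all: it is quoted from \cite{AC1} (Proposition 11) and used as a black box, so there is no in-house proof to compare against. Your localization strategy for (b) is nevertheless close in spirit to what the paper does elsewhere: Lemma~\ref{local} proves precisely the ungraded $T^1$-version of your key step (that $T^1$ commutes with localization and that $K[\Delta]$ localized at $P_{\overline{F}}$ becomes the Stanley--Reisner ring of the link tensored with a Laurent factor), and the authors remark after that lemma that it could instead be deduced from Proposition~\ref{Proposition11}(b) --- you are running that implication in the opposite direction.

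Two points in your sketch need genuine repair. First, in (b) the identification $T^i(\Delta)_{\ab-\bb}\iso\bigl(T^i(\Delta)[x_A^{-1}]\bigr)_{\ab-\bb}$ requires that the transition maps of the localization colimit, namely multiplication by $x_A$ on the graded pieces $T^i(\Delta)_{\ab-\bb}\to T^i(\Delta)_{\ab+\sum_{i\in A}\eb_i-\bb}$, be isomorphisms. Theorem~\ref{theorem9}(b) as quoted only says these pieces ``depend only on $A$ and $B$'', i.e.\ are abstractly isomorphic; you need the stronger fact (true in \cite{AC1}, but not recorded in this paper) that the isomorphism is realized by multiplication by the variables. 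This is exactly the ``subtle point'' you flag, and it is not automatic. Second, in (a) the case $A\in\Delta$, $B=\{j\}$ with $A\union\{j\}\notin\Delta$ is \emph{not} a vanishing of the graded cochain group: after reducing to $\Gamma=\link_\Delta A$ via (b), the space $\Hom_S(I_\Gamma,K[\Gamma])_{-\eb_j}$ is one-dimensional, spanned by the map sending $x_j\mapsto 1$ and every other minimal generator to $0$, and $T^1(\Gamma)_{-\eb_j}=0$ only because this cochain equals $\delta^*(\partial/\partial x_j)$ and hence dies in the quotient by derivations. Your phrase ``the negative weight at coordinate $j$ is killed by this relation'' reads as a degree argument and would be false as such; compare Lemma~\ref{Bnot}(b) and Corollary~\ref{useful}(b), where the dimension drop by $1$ for $|B|=1$ is exactly this phenomenon. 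The remaining vanishings ($A\notin\Delta$, and $B=\emptyset$) are correct degree computations for $T^1$, though ``propagates up the LS complex'' for $i\geq 2$ deserves the explicit observation that all free modules in the cotangent complex are generated in multidegrees whose supports contain non-faces.
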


In the present paper, we are only interested in $T^1$. Because of Proposition~\ref{Proposition11}(b) it is important to know how to compute $T^1(\Delta)_{-\bb}$ for $B\subseteq {[\Delta]}$. For this purpose we introduce some notation.

Let  $\MY$  be a collection of subsets of $[n]$. We set  $\MK^0(\MY)=\{\lambda:\MY\to K\}$ and
$$
\MK^1(\MY)=\big\{\lambda:\{(Y_0,Y_1)\in \MY^2:\  Y_0\cup Y_1\in \MY\}\to K\big\}
$$
and define the $K$-linear map $d:\MK^0(\MY)\to \MK^1(\MY)$  by $(d\lambda)(Y_0,Y_1)=\lambda(Y_1)-\lambda(Y_0)$.

Next given $B\subseteq [n]$ and $\Delta$, we define
\begin{eqnarray*}
N_B(\Delta)&=&\{F\in \Delta :\  F\cap B=\emptyset,\; F\cup B\notin \Delta\},\\
\widetilde{N}_B(\Delta)&=&\{F\in N_B(\Delta):  \mbox{there exists} \ B'\subsetneq B
\mbox{ with }F\cup B'\notin \Delta\}.
\end{eqnarray*}

With the notation introduced one has
\begin{Proposition}[\cite{AC1}, Corollary 6]
\label{compute}
{\em (a)} Suppose $|B|\geq 2$. Then
$$T^1(\Delta)_{-\bb}=\Ker\big (\MK^0(N_B(\Delta))\stackrel{(d,r)}{\longrightarrow} \MK^1(N_B(\Delta))\oplus \MK^0(\widetilde{N}_B(\Delta))\big),$$
where $d:\MK^0(N_B(\Delta))\to \MK^1(N_B(\Delta))$ is the map as defined above and $r:\MK^0(N_B(\Delta))\to \MK^0(\widetilde{N}_B(\Delta))$ is the restriction map.

{\em (b)} For $|B|=1$,
the $K$-dimension of $T^1(\Delta)_{-\bb}$ is one less than the $K$-dimension of the kernel given in {\em (a)}.
\end{Proposition}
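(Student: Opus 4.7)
The plan is to unpack $T^1(\Delta)_{-\bb}=\Coker(\delta^*)_{-\bb}$, where $\delta^*\colon \Der_K(S)\to \Hom_S(I_\Delta,S/I_\Delta)$ is the natural map, by computing both the $\Hom$-group in this degree and the image of $\delta^*$.

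First I would set up a bijection between $\Hom_S(I_\Delta,S/I_\Delta)_{-\bb}$ and the kernel appearing in the statement. For $\varphi$ of multidegree $-\bb$ and $F\in N_B(\Delta)$, the monomial $x_{F\cup B}$ lies in $I_\Delta$ (because $F\cup B\notin\Delta$) and $\varphi(x_{F\cup B})$ sits in the one-dimensional piece $(S/I_\Delta)_{\eb_F}=K\cdot x_F$ (because $F\in\Delta$), so $\varphi$ determines scalars $\lambda(F)\in K$ via $\varphi(x_{F\cup B})=\lambda(F)\,x_F$. The restriction condition $r\lambda=0$ is forced as follows: for $F\in\widetilde{N}_B(\Delta)$, choose $B'\subsetneq B$ with $F\cup B'\notin\Delta$; then $x_{F\cup B}=x_{B\setminus B'}\,x_{F\cup B'}$, while $\varphi(x_{F\cup B'})$ must vanish because its multidegree has negative entries on $B\setminus B'$. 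The cocycle condition $d\lambda=0$ follows by applying $\varphi$ to the Koszul relation $x_{Y_1\setminus Y_0}\,x_{Y_0\cup B}=x_{Y_0\setminus Y_1}\,x_{Y_1\cup B}$, which reduces to $(\lambda(Y_0)-\lambda(Y_1))\,x_{Y_0\cup Y_1}=0$ in $S/I_\Delta$; since $Y_0\cup Y_1\in N_B(\Delta)\subseteq\Delta$, the scalar must vanish.

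Conversely, I would have to verify that every $\lambda$ in the kernel extends to a well-defined $\varphi$ via $\varphi(x_N):=\lambda(N\setminus B)\,x_{N\setminus B}$ when $B\subseteq N$ and $\varphi(x_N):=0$ otherwise on minimal generators, and then check the Koszul relations on all pairs of minimal non-faces. When both $N,N'$ contain $B$ the relation reduces immediately to $d\lambda=0$. The delicate case is $B\subseteq N$ but $B\not\subseteq N'$: the relation reads $\lambda(N\setminus B)\,x_{(N\cup N')\setminus B}=0$, and setting $B'':=N'\cap B\subsetneq B$ and $Y:=(N\cup N')\setminus B$ one checks that $Y\cup B''\supseteq N'$, so $Y\in\widetilde{N}_B(\Delta)$ whenever $Y\in\Delta$; the restriction and cocycle conditions together then give $\lambda(N\setminus B)=\lambda(Y)=0$, which is exactly what the Koszul relation requires. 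Propagating the restriction condition to every minimal-pair Koszul constraint in this way is the main technical obstacle.

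Finally I would account for the image of $\delta^*$. For $|B|\geq 2$, no partial derivative $\partial/\partial x_j$ has multidegree $-\bb$, so $\delta^*$ vanishes in this degree and $T^1(\Delta)_{-\bb}$ equals the kernel, yielding~(a). For $B=\{i\}$ one first notes that $\widetilde{N}_B(\Delta)=\emptyset$ (since the only $B'\subsetneq B$ is $\emptyset$, and $F\cup\emptyset=F\in\Delta$), and $\delta^*(\partial/\partial x_i)$ corresponds to the constant function $\lambda\equiv 1$ on $N_B(\Delta)$, a nonzero element of the kernel that spans a one-dimensional subspace; cokering by this subspace yields the dimension formula in~(b).
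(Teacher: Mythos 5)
Your argument is correct, and it is worth noting that the paper itself offers no proof of this statement: it is imported verbatim from Altmann--Christophersen as \cite[Corollary 6]{AC1}. What you have written is a sound, self-contained verification via the identification $T^1(\Delta)_{-\bb}=\bigl(\Coker(\delta^*\colon \Der_K(S)\to\Hom_S(I_\Delta,S/I_\Delta))\bigr)_{-\bb}$: the assignment $\varphi\mapsto\lambda$ with $\varphi(x_{F\cup B})=\lambda(F)x_F$ is injective into $\Ker(d,r)$ (your derivations of $r\lambda=0$ from the degree obstruction on $\varphi(x_{F\cup B'})$ and of $d\lambda=0$ from the Koszul relation are both correct), and your treatment of the delicate surjectivity case --- where $B\subseteq N$ but $B\not\subseteq N'$, forcing $Y=(N\cup N')\setminus B$ into $\widetilde{N}_B(\Delta)$ via $B''=N'\cap B$ --- is exactly the point where a sloppier attempt would fail; checking the Taylor relations on pairs of minimal generators does suffice since they generate the first syzygies of a monomial ideal. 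The degree count $(\Der_K(S))_{-\bb}=0$ for $|B|\geq 2$, and the identification of $(\Im\delta^*)_{-\eb_i}$ with the span of the constant function $1$ on $N_{\{i\}}(\Delta)$ for part (b), complete the argument (modulo the degenerate case $N_{\{i\}}(\Delta)=\emptyset$, where the dimension formula is vacuous in the same way it is in the cited source). In substance this reconstructs the computation of \cite{AC1}, which likewise analyzes the graded pieces of $\Hom(I/I^2,S/I)$ face by face, so nothing essential is missing.
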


\bigskip
\noindent
{\it Rigidity}.
The simplicial complex $\Delta$ is called {\em $\emptyset$-rigid}  if  $T^1(\Delta)_{-\bb}=0$ for all $\bb\in \{0,1\}^n$.
For $\emptyset$-rigidity it is enough to check the vanishing of $T^1(\Delta)_{-\bb}$ for  $\bb\in \{0,1\}^n$ with  $\supp \bb\subseteq  [\Delta]$. It follows from Proposition~\ref{Proposition11} that $\Delta$ is rigid if and only if $\link_{\Delta}A$ is $\emptyset$-rigid  for all $A\in \Delta$.  Thus we will assume $\ab=0$ from now on.

As an immediate consequence of Proposition~\ref{compute} one obtains

\begin{Corollary}
\label{useful}
{\em (a)} Suppose that $|B|\geq 2$. Then
$
T^1(\Delta)_{-\bb}=\Lambda_{B}(\Delta)$, where
$$\Lambda_{B}(\Delta)=\{\lambda:N_B(\Delta)\to K \: \;
\lambda |_{\widetilde{N}_B(\Delta)}=0 \; \mathrm{and} \;
\lambda(F)=\lambda(G)\; \mathrm{ whenever  }\; F\subseteq G\}.
$$

{\em (b)}  $\dim_KT^1(\Delta)_{-\bb}=\dim_K\Lambda_{B}(\Delta)-1$ if $|B|=1$.
\end{Corollary}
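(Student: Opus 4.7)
The plan is to unpack the kernel description in Proposition~\ref{compute} and observe that, after renaming, it is exactly $\Lambda_B(\Delta)$. An element $\lambda\in\MK^0(N_B(\Delta))$ lies in $\Ker(d,r)$ if and only if (i) $\lambda(Y_1)=\lambda(Y_0)$ for every pair $(Y_0,Y_1)\in N_B(\Delta)^2$ with $Y_0\cup Y_1\in N_B(\Delta)$, and (ii) $\lambda|_{\widetilde{N}_B(\Delta)}=0$. Condition (ii) is the first defining condition of $\Lambda_B(\Delta)$, so the only thing to check is that condition (i) is equivalent to: $\lambda(F)=\lambda(G)$ whenever $F\subseteq G$ in $N_B(\Delta)$.

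First I would note the easy direction: if $F\subseteq G$ are both in $N_B(\Delta)$, then $F\cup G=G\in N_B(\Delta)$, so $(F,G)$ is an admissible argument of $d$ and (i) gives $\lambda(F)=\lambda(G)$. For the converse, take any $Y_0,Y_1\in N_B(\Delta)$ with $Y:=Y_0\cup Y_1\in N_B(\Delta)$. Since $Y_0\subseteq Y$ and $Y_1\subseteq Y$ with all three sets in $N_B(\Delta)$, the monotonicity condition yields $\lambda(Y_0)=\lambda(Y)=\lambda(Y_1)$. This establishes (a).

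For (b), I would first observe that when $|B|=1$ the set $\widetilde{N}_B(\Delta)$ is empty: the only proper subset of $B$ is $\emptyset$, and any $F\in N_B(\Delta)$ satisfies $F=F\cup\emptyset\in\Delta$ by definition, so no $F$ can meet the defining condition of $\widetilde{N}_B(\Delta)$. Thus condition (ii) is vacuous and the kernel in Proposition~\ref{compute}(a) still coincides with $\Lambda_B(\Delta)$ by the argument above. Proposition~\ref{compute}(b) then says that $T^1(\Delta)_{-\bb}$ has $K$-dimension one less than this kernel, giving $\dim_K T^1(\Delta)_{-\bb}=\dim_K\Lambda_B(\Delta)-1$.

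There is no real obstacle here — the content is a straightforward translation of the kernel of $(d,r)$ into the intrinsic language of ``constant on chains, vanishing on $\widetilde{N}_B(\Delta)$''. The one subtle bookkeeping point is the verification that $\widetilde{N}_B(\Delta)=\emptyset$ when $|B|=1$, which explains why the restriction piece plays no role in (b) and why only the $-1$ correction from Proposition~\ref{compute}(b) is needed.
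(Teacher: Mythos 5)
Your argument is correct and is exactly the unpacking the paper intends: Corollary~\ref{useful} is stated there as an immediate consequence of Proposition~\ref{compute}, with no further proof given, and your identification of $\Ker(d,r)$ with $\Lambda_B(\Delta)$ (including the two-sided equivalence between $d\lambda=0$ and constancy along inclusions, via $Y_0,Y_1\subseteq Y_0\cup Y_1$) together with the observation that $\widetilde{N}_B(\Delta)=\emptyset$ when $|B|=1$ is precisely the intended verification. Nothing is missing.
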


\medskip
 Let $B$ be a subset of $[n]$. We define $G_B(\Delta)$ to be the graph whose vertex set is $N_B(\Delta)$ and for which $\{F,G\}$ is an edge of $G_{B}(\Delta)$  if and only if $F\subsetneq G$ or $G\subsetneq F$. It follows that $\lambda\in \Lambda_{B}(\Delta)$ is constant on the connected components of $G_{B}(\Delta)$. Note that if $|B|=1$, then $\widetilde{N}_B(\Delta)=\emptyset$.
\medskip
We see that if $|B|\geq 2$, then
\begin{eqnarray}
\label{number1}
\dim_KT^1(\Delta)_{-\bb}&=&\text{number of connected components of $G_{B}(\Delta)$}\\
&& \text{ which contain no element of $\widetilde{N}_B(\Delta)$,}\nonumber
\end{eqnarray}
and if $B=\{i\}$, then
\begin{eqnarray}
\label{number2}
\dim_KT^1(\Delta)_{-\eb_i}=\text{number of connected components of $G_{\{i\}}(\Delta)$}-1
\end{eqnarray}

Hence the rigidity of a simplicial complex is independent of the field $K$.

\begin{Examples}{\em
(a) Let $2^{[n]}=\langle [n]\rangle$ be the simplex on the vertex set $[n]$. For each $B\subseteq [n]$
we have $\widetilde{N}_B(2^{[n]})=\emptyset$.
This implies that  $2^{[n]}$ is $\emptyset$-rigid. Moreover,
for each $A\in 2^{[n]}$, its link is a  simplex, too.
Thus, $2^{[n]}$ is rigid.
Of course, this is known before because  $K[2^{[n]}]=K[x_1,\ldots,x_n]$.

(b) Fix $n\geq 2$,  and let  $\Gamma=2^{[n]}\setminus\{[n]\}$ be the boundary of $2^{[n]}$.  Then
$N_{[n]}(\Gamma)=\{\emptyset\}$, but $\widetilde{N}_{[n]}(\Gamma)=\emptyset$. In particular,
$\dim_K T^1(\Gamma)_{-\bb}=1$ for $\bb=(1,\ldots,1)$. Again this follows also directly  from the fact that $K[\Gamma]=K[x_1,\ldots,x_n]/(x_1x_2\cdots x_n)$.

(c) let  $\Delta=\langle \{1\},\ldots,\{n\}\rangle$. Then $\Delta$   is $0$-dimensional. The set
$B=\{1\}$ yields $N_{B}(\Delta)=\{\{2\},\ldots,\{n\}\}$ and
$\widetilde{N}_{B}(\Delta)=\emptyset$. Hence,
$T^1(\Delta)_{-\bb}\neq 0$ for $n\geq 3$, and so $\Delta$ is not rigid.
Note that $K[\Delta]=K[x_1,\ldots,x_n]/(x_ix_j\:\;  i\neq j)$.

The ideal $(x_ix_j\:\;  i\neq j)$ may be interpreted as the edge ideal of the complete graph on the vertex set $[n]$. Rigidity of edge ideals will be discussed in details in Section~\ref{graphs}.}
\end{Examples}

The following lemma tells us  when  $T^1(\Delta)_{-\bb}$  vanishes if  $\supp \bb\not\in \Delta$. We denote by $2^B$ the simplex on the vertex set $B$.

\begin{Lemma}
\label{Bnot}
Let $\Delta$ be a simplicial complex on the vertex set $[n]$, and let $\bb\in \{0,1\}^n$. Let $B=\supp \bb$ and assume that $B\not\in \Delta$.
\begin{enumerate}
\item[(a)] Suppose that $|B|\geq 2$. Then $T^1(\Delta)_{ -\bb}= 0$ if and only if  $\widetilde{N}_B(\Delta)\neq \emptyset$.  Moreover, $T^1(\Delta)_{-b}\neq 0$ implies that
     the boundary $2^B\setminus \{B\}$ of $2^B$ is contained in  $\Delta$.
\item[(b)]  Suppose that $|B|=1$. Then $T^1(\Delta)_{-\bb}=0$.
\end{enumerate}
\end{Lemma}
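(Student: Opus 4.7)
My plan is to base the entire argument on a single structural observation: since $B\notin\Delta$, the empty face satisfies $\emptyset\cap B=\emptyset$ and $\emptyset\cup B=B\notin\Delta$, so $\emptyset\in N_B(\Delta)$. For every other $F\in N_B(\Delta)$ the strict inclusion $\emptyset\subsetneq F$ produces an edge $\{\emptyset,F\}$ of $G_B(\Delta)$, so the graph $G_B(\Delta)$ is connected. Consequently every $\lambda\in\Lambda_B(\Delta)$ is forced to be a constant function on $N_B(\Delta)$.

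For part (a), I would combine this observation with Corollary~\ref{useful}(a). The constraint $\lambda|_{\widetilde{N}_B(\Delta)}=0$ pins the constant to zero precisely when $\widetilde{N}_B(\Delta)\neq\emptyset$, giving $\Lambda_B(\Delta)=0$; when $\widetilde{N}_B(\Delta)=\emptyset$ any constant is admissible and $\Lambda_B(\Delta)\iso K$. This yields the claimed equivalence $T^1(\Delta)_{-\bb}=0\iff\widetilde{N}_B(\Delta)\neq\emptyset$. For the \emph{moreover} clause I would argue by contrapositive: suppose some nonempty $B'\subsetneq B$ fails to lie in $\Delta$; taking $F=\emptyset\in N_B(\Delta)$ one finds $F\cup B'=B'\notin\Delta$, hence $\emptyset\in\widetilde{N}_B(\Delta)$, and the equivalence just proved forces $T^1(\Delta)_{-\bb}=0$.

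For part (b), with $B=\{i\}$ and $\{i\}\notin\Delta$, downward closure of $\Delta$ forces $F\cup\{i\}\notin\Delta$ for every $F\in\Delta$ (else $\{i\}$ would be a face), so $N_B(\Delta)=\Delta$. Moreover $\widetilde{N}_B(\Delta)=\emptyset$ since the only proper subset of $B$ is $\emptyset$ and $F\cup\emptyset=F\in\Delta$. The connectedness observation applied to $G_{\{i\}}(\Delta)$ (which again contains $\emptyset$ as a vertex comparable to all others) gives $\dim_K\Lambda_B(\Delta)=1$, so Corollary~\ref{useful}(b) yields $\dim_K T^1(\Delta)_{-\bb}=0$.

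I anticipate no serious obstacle; the one item to be careful about is the \emph{moreover} clause in (a), where one must remember to invoke a nonempty proper subset $B'$ of $B$ (not $B'=\emptyset$, which trivially satisfies $F\cup B'=F\in\Delta$) in order to place $\emptyset$ into $\widetilde{N}_B(\Delta)$.
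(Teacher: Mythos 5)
Your proposal is correct and follows essentially the same route as the paper: both arguments rest on the single observation that $B\notin\Delta$ forces $\emptyset\in N_B(\Delta)$, so every $\lambda\in\Lambda_B(\Delta)$ is constant, whence $\Lambda_B(\Delta)$ is at most one-dimensional and vanishes exactly when $\widetilde{N}_B(\Delta)\neq\emptyset$; the ``moreover'' clause and part (b) then follow just as you describe. (The paper also notes that (b) can alternatively be read off from Proposition~\ref{Proposition11}(a), but your derivation via Corollary~\ref{useful}(b) matches its primary argument.)
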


\begin{proof} Since $B\not\in \Delta$ it follows that $\emptyset \in N_B(\Delta)$. Therefore, $\lambda(F)=\lambda(\emptyset)$ for all $\lambda \in \Lambda_B(\Delta)$. Thus the $K$-vector space  $\Lambda_B(\Delta)$ is generated by one element $\lambda_0$ which is forced to be the $0$-element if $\widetilde{N}_B(\Delta)\neq \emptyset$ and which may be chosen to be the constant map with $\lambda_0(\emptyset)=1$, otherwise. This proves (a).  Also (b) follows from this considerations keeping in mind Corollary~\ref{useful}(b). Alternatively,  statement (b)  follows from Lemma~\ref{Proposition11}(a).
\end{proof}

\bigskip
\noindent
{\it Separation.}  Let, as before,  $\Delta$ be a simplicial complex on the vertex set $[n]$. We say that $\Delta$ is {\em separable}, if for some $i$, $I_\Delta$ admits a separation for $x_i$. Otherwise, we say that $\Delta$  is {\em inseparable}. In the further discussions we refer to the conditions (i), (ii) and (iii) for separation, as given  in the introduction.

Let $I=I_\Delta$ be minimally generated by the monomials $u_1,\ldots, u_m$. We first observe:
\begin{Lemma}\label{lemma1.7}
   If $J$ is a separation of $I$ for the variable $x_i$, then $T^1(\Delta)_{-\eb_i}\neq 0$.
   \end{Lemma}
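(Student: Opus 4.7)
The strategy is to turn the $1$-parameter flat family afforded by the separation $J$ into a first-order deformation and to read off a non-trivial element of $T^1(\Delta)_{-\eb_i}$. Condition (iii) says that $y-x_i$ is a non-zero divisor on $S[y]/J$, hence $S[y]/J$ is flat over $K[t]$ via $t\mapsto y-x_i$. Reducing modulo $t^2$, the ring
\[
R_\epsilon \;=\; S[y]/(J+(y-x_i)^2S[y])
\]
is flat over $K[\epsilon]=K[t]/(t^2)$ and reduces modulo $\epsilon$ to $S/I$. Substituting $y=x_i+\epsilon$ identifies $R_\epsilon\iso S[\epsilon]/J_\epsilon$, where $J_\epsilon$ is generated by lifts of the minimal generators of $I$.

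Next I would analyse these lifts in the $\ZZ^n$-grading. Since $I$ is squarefree, every minimal generator $v_k$ of $J$ satisfies $v_k|_{y=x_i}=u_k\in G(I)$, and $v_k$ is obtained from $u_k$ either by leaving it unchanged or by substituting $x_i\mapsto y$; the latter only occurs if $x_i\mid u_k$. Writing $u_k=x_iw_k$ in that case, we get $v_k=w_k(x_i+\epsilon)=u_k+w_k\epsilon$ in $S[\epsilon]$. Hence $J_\epsilon=(u_1+g_1\epsilon,\dots,u_m+g_m\epsilon)$ with
\[
g_k\;=\;\begin{cases}u_k/x_i, & \text{if } y\mid v_k,\\ 0,&\text{otherwise,}\end{cases}
\]
and the corresponding homomorphism $\varphi\in I^*=\Hom_S(I,S/I)$ with $\varphi(u_k)=g_k+I$ is well-defined (because $R_\epsilon$ is flat) and homogeneous of $\ZZ^n$-degree $-\eb_i$.

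It remains to check that the class of $\varphi$ in $T^1(\Delta)_{-\eb_i}=\Coker\delta^*$ is non-zero. Any $K$-derivation of $S$ of degree $-\eb_i$ must vanish on every $x_j$ with $j\neq i$ (their image would have an impossible negative $i$-exponent), hence is of the form $c\,\partial/\partial x_i$ for some $c\in K$, and so $\delta^*(c\,\partial/\partial x_i)(u_k)=c(u_k/x_i)+I$ when $x_i\mid u_k$, and $0$ otherwise. By condition (ii) there exist minimal generators $v_{k_1},v_{k_2}\in G(J)$ divisible by $x_i$ and by $y$ respectively. A generator divisible by $x_i$ must be of the unchanged type, so $g_{k_1}=0$ while $x_i\mid u_{k_1}$; a generator divisible by $y$ is of the substituted type, so $g_{k_2}=u_{k_2}/x_i=w_{k_2}$ with $x_i\mid u_{k_2}$. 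If $\varphi=\delta^*(c\,\partial/\partial x_i)$, comparing on $u_{k_1}$ forces $c\,w_{k_1}\in I$, hence $c=0$ since $w_{k_1}\notin I$ by minimality of $u_{k_1}$; comparing on $u_{k_2}$ forces $(1-c)w_{k_2}\in I$, hence $c=1$. This contradiction yields $\varphi\neq 0$ in $T^1(\Delta)_{-\eb_i}$.

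The main point to be careful about is ensuring the existence of the two distinct generator types (one unchanged, one substituted) both divisible by $x_i$: this is exactly what the apparently modest condition (ii) provides, and it is the whole reason separation produces a class in the single graded piece $T^1(\Delta)_{-\eb_i}$ rather than a cohomologically trivial perturbation.
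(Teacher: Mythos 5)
Your proof is correct and follows essentially the same route as the paper's: both identify the class $\varphi\in I^*$ of degree $-\eb_i$ with $\varphi(u_k)=u_k/x_i+I$ precisely on the generators whose lifts in $J$ involve $y$, and both obtain the contradiction (your $c=0$ versus $c=1$, the paper's $\lambda=1$ versus $\varphi(u_j)=0$) by evaluating a putative preimage $c\,\partial/\partial x_i$ on one generator of each type supplied by condition (ii). The only cosmetic difference is that you make the reduction modulo $(y-x_i)^2$ explicit, whereas the paper reads off the induced element of $T^1$ directly from the presentation $J=(u_1+(u_1/x_i)(y-x_i),\ldots,u_k+(u_k/x_i)(y-x_i),u_{k+1},\ldots,u_m)$.
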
  \begin{proof}
    By condition (iii), $S/I$ is obtained from $S[y]/J$ by reduction modulo a linear form  which is a regular element on $S[y]/J$. This implies that $I$ and $J$ are minimally generated by the same number of generators.  Let $J$  be minimally generated by $v_1,\ldots,v_m$. We may assume that $y$ divides $v_1,\ldots,v_k$ but does not divide the other generators of $J$. We may furthermore assume that for all $i$,  $v_i$ is mapped to $u_i$ under the $K$-algebra homomorphism (i). Then we may write
\[
J=(u_1+(u_1/x_i)(y-x_i),\ldots, u_k+(u_k/x_i)(y-x_i), u_{k+1},\ldots,u_m).
\]
From this presentation and by (iii)  it follows  that $S[y]/J$ is an unobstructed deformation of $S/I$ induced by the element
$[\varphi]\in T^1(S/I)_{-\eb_i}$, where $\varphi\in I^*$ is the $S$-module homomorphism with $\varphi(u_j)=u_j/x_i+I$ for $j=1,\ldots,k$ and $\varphi(u_j)=0$, otherwise.

Condition (ii)  makes sure that $S[y]/J$  is a non-trivial deformation of $S/I$. Indeed,  suppose $[\varphi]=0$.  Observe, that $\deg \varphi=-\eb_i$. Therefore,   $\varphi\in (\Im \delta^*)_{-\eb_i}$,  which is the $K$-vector space spanned by $\varphi_i=\delta^*(\partial/\partial x_i)$.  Here $\delta^*\: \Der_K(S)\to I^*$ is the map as defined in the introduction with $\delta^*(\partial)(f)=\partial f+I$ for $\partial \in \Der_K(S)$ and $f\in I$.
It follows that $\varphi= \lambda \varphi_i$ for some $\lambda \in K$. Since $\varphi(u_1)= u_1/x_i+I=\lambda \varphi_i(u_1)$ it follows that $\lambda=1$. On the other hand, by condition (ii),  there exists $j>k$ such that $x_i|u_j$  and  $\varphi(u_j)= I\neq u_j/x_i+I=\varphi_i(u_j)$. This is a contradiction. \end{proof}

It follows from the above result  that $\Delta$ is inseparable if $T^1(\Delta)_{-\eb_i}=0$ for $i=1,\ldots,n$.
Moreover the deformation $K[y]\to  S[y]/J$  induces an infinitesimal deformation $K[\epsilon]\to  S[\epsilon]/\bar{J}$ by reduction modulo $y^2$. As explained in the introduction, this infinitesimal deformation yields an element in $T^1(\Delta)$. This assignment is called the {\it Kodaira-Spencer map}. The arguments given above, even show that the image of the deformation $K[y]\to  S[y]/J$ in $T^1(\Delta)$ via the Kodaira-Spencer map is non-trivial.

\medskip
Finally we obtain

\begin{Theorem}
\label{insep}
The following conditions are equivalent:
\begin{enumerate}
\item[(a)] $\Delta$ is inseparable.
\item[(b)]  $G_{\{i\}}(\Delta)$ is connected for $i=1,\ldots,n$.
\item[(c)] $T^1(\Delta)_{-\eb_i}=0$ for $i=1,\ldots,n$.
\end{enumerate}
\end{Theorem}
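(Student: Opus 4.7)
The equivalence (b)$\Leftrightarrow$(c) is immediate from (\ref{number2}): whenever $N_{\{i\}}(\Delta)\neq\emptyset$, $\dim_K T^1(\Delta)_{-\eb_i}$ equals the number of connected components of $G_{\{i\}}(\Delta)$ minus one, so it vanishes exactly when the graph is connected. In the degenerate cases $\{i\}\notin\Delta$ (where $G_{\{i\}}(\Delta)$ is a star centred at $\emptyset$) and $i$ a cone point (where $N_{\{i\}}(\Delta)=\emptyset$), both (b) and (c) hold automatically, using Lemma~\ref{Bnot}(b) and direct inspection. The implication (c)$\Rightarrow$(a) is the contrapositive of Lemma~\ref{lemma1.7}.

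The substantive direction is (b)$\Rightarrow$(a), which I prove by contraposition: assuming $G_{\{i\}}(\Delta)$ is disconnected for some $i$, I construct an explicit separation of $I_\Delta$ for $x_i$. Let $u_1,\ldots,u_p$ be the minimal generators of $I_\Delta$ divisible by $x_i$, write $u_j=x_i w_j$, and set $F_j=\supp w_j$. The $F_j$'s are precisely the minimal elements of $N_{\{i\}}(\Delta)$, and since each $F\in N_{\{i\}}(\Delta)$ is joined by an edge to some $F_j\subseteq F$, every component of $G_{\{i\}}(\Delta)$ contains at least one $F_j$. Using disconnectedness, partition $\{1,\ldots,p\}$ into non-empty sets $A_1\sqcup A_2$ compatible with the component structure (indices whose $F$'s lie in the same component are placed together). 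Writing $u_{p+1},\ldots,u_m$ for the generators of $I_\Delta$ not divisible by $x_i$, define
\[
J=\bigl(\,\{u_j:j\in A_1\}\cup\{w_j y:j\in A_2\}\cup\{u_{p+1},\ldots,u_m\}\,\bigr)\subseteq S[y].
\]
Conditions (i) and (ii) of the separation definition are immediate; condition (iii), that $y-x_i$ is a non-zero-divisor on $S[y]/J$, is the crux.

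Since $J$ is a monomial ideal, $(y-x_i)v\in J$ for a monomial $v\in S[y]$ forces both $yv\in J$ and $x_iv\in J$. Writing $v=y^a v_0$ with $v_0\in S$, the case $a\geq 1$ is straightforward: any generator of $J$ dividing $yv$ either has no $y$, hence divides $v_0$ and so $v$; or is $w_j y$ with $w_j\mid v_0$, which together with $a\geq 1$ gives $w_j y\mid v$. For $a=0$, i.e.\ $v=v_0\in S$: no generator $w_j y$ divides $x_iv_0$ (since $y\nmid x_iv_0$), so $x_iv_0\in J$ yields either $v_0\in J$ directly or $w_j\mid v_0$ for some $j\in A_1$; similarly $yv_0\in J$ yields $v_0\in J$ or $w_l\mid v_0$ for some $l\in A_2$. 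In the only non-trivial remaining case, $\lcm(w_j,w_l)=x_{F_j\cup F_l}$ divides $v_0$. If $F_j\cup F_l$ were in $\Delta$, then from $i\notin F_j\cup F_l$ and $F_j\cup F_l\cup\{i\}\notin\Delta$ (since this set contains the non-face $F_j\cup\{i\}$) we would get $F_j\cup F_l\in N_{\{i\}}(\Delta)$; the strict inclusions $F_j\subsetneq F_j\cup F_l\supsetneq F_l$ (forced because $F_j$ and $F_l$ live in different components and are therefore incomparable) would give edges in $G_{\{i\}}(\Delta)$ placing $F_j$ and $F_l$ in a common component, contradicting the choice of partition. Hence $F_j\cup F_l\notin\Delta$; some $u_t$ with $t>p$ divides $x_{F_j\cup F_l}$, hence $v_0$, giving $v_0\in J$. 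The main obstacle is precisely this verification of (iii): it is exactly where the combinatorial hypothesis feeds in, through the observation that $F_j\cup F_l\in\Delta$ would force a path in $G_{\{i\}}(\Delta)$ between $F_j$ and $F_l$.
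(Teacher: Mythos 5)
Your treatment of (b)$\iff$(c) and (c)$\implies$(a) coincides with the paper's, and your proof of (b)$\implies$(a) by contraposition is, up to relabelling, the paper's proof of (a)$\implies$(b): both split the generators of $I_\Delta$ divisible by $x_i$ according to the components of $G_{\{i\}}(\Delta)$ and replace $x_i$ by $y$ in the generators coming from one part. Your checks of conditions (i) and (ii), and your argument that $F_j\cup F_l\notin\Delta$ when $F_j$ and $F_l$ lie in different components, are correct; the latter is exactly the paper's observation that $I(\MA)I(\MB)\subseteq I_\Delta$.

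The gap is in the verification of (iii). Your opening sentence, that $(y-x_i)v\in J$ for a \emph{monomial} $v$ forces $yv\in J$ and $x_iv\in J$, is true, and your subsequent computation correctly shows that $yv\in J$ and $x_iv\in J$ imply $v\in J$. But this only proves $(J:x_i)\cap(J:y)=J$, i.e.\ that no monomial outside $J$ is killed by $y-x_i$. A zero-divisor witness need not be a monomial, and $(J:(y-x_i))$ is not a monomial ideal in general: for $J=(x^2,y^2)$ the element $y-x$ annihilates $x+y$ modulo $J$. So the reduction to monomials is precisely the step that needs an argument, and it is omitted. The implication ``$(J:x_i)\cap(J:y)=J$ implies $y-x_i$ is a nonzerodivisor on $S[y]/J$'' does hold for any monomial ideal $J$, but proving it requires work: one passes to a witness $f$ that is homogeneous for the grading with $\deg y=\deg x_i$, decomposes its monomials into chains $m,\,ym/x_i,\,y^2m/x_i^2,\dots$, and inducts on the chain length, using that the monomials $ym_s$ form a shorter bad chain. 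As written, your proof of the crucial condition (iii) is therefore incomplete. The paper sidesteps this entirely by a prime-ideal argument: if $y-x_i$ were a zero-divisor it would lie in a monomial prime $P\supseteq J$, forcing $y,x_i\in P$; then $I(\MA)I(\MB)\subseteq J\subseteq P$ gives $I(\MA)\subseteq P$ or $I(\MB)\subseteq P$, and minimality of $P$ over $J$ shows that $y$ or $x_i$ could be dropped from $P$, a contradiction. Either route works, but your write-up needs the monomial-reduction lemma made explicit (or replaced by the localization argument).
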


\begin{proof}
(c) \implies (a) follows from Lemma~\ref{lemma1.7}, and (b)\iff (c) follows from  the equality (\ref{number2})  in the preceding subsection.

(a)\implies (b):  Suppose $G_{\{i\}}(\Delta)$ is not connected for some $i$. Then the vertex set of $G_{\{i\}}(\Delta)$ can be written as a disjoint union  $V(G_{\{i\}}(\Delta) )=\MA\union \MB$ such that  for all $F\in \MA$ and all $G\in \MB$, neither  $F\subsetneq G$ nor $G\subsetneq F$.

Let $I=I_\Delta$,  $I(\MA)= (x_F\:\; F\in \MA)$ and $I(\MB)=(x_G\:\; G\in \MB)$. Then
\[
I=(x_i I(\MA),  x_iI(\MB), u_1,u_2,\ldots,u_t),
\]
where none of the $u_j$  is  divisible by $x_i$.

Note that if $\{x_{F_1},\ldots,x_{F_r}\}$ is the minimal set of monomial generators of $I(\MA)$ and $\{x_{G_1},\ldots,x_{G_s}\}$ is the minimal set of monomial generators of $I(\MB)$, then
\[
\{x_ix_{F_1},\ldots,x_ix_{F_r}, x_ix_{G_1},\ldots,x_ix_{G_s}\}
\]
is the set of monomials of the minimal monomial set of generators  of $I$  which are divisible by $x_i$.
Thus, the ideal
$
J=(yI(\MA), x_iI(\MB), u_1,u_2,\ldots,u_t)\subseteq S[y]$ satisfies the conditions (i) and (ii) of a separation of $I$. We will show that $y-x_i$ is a non zero-divisor of $S[y]/J$. This will then imply that $\Delta$ is separable, yielding a contradiction.

Indeed, suppose $y-x_i$  is a zero-divisor  of $S[y]/J$. Then $y-x_i$ belongs to a minimal prime ideal $P$ of $J$. Since $P$ is a monomial prime ideal it follows that $y,x_i\in P$.

Now let $F\in \MA$ and $G\in \MB$ and suppose that $F\union G\in \Delta$. Then $F\union G\in N_{\{i\}}(\Delta)$, and hence  $F\union G\in \MA$ since $F\subsetneq F\union G$, and similarly $F\union G\in \MB$ since $G\subsetneq F\union G$. This is a contradiction. Therefore, $F\union G\not\in \Delta$ for all $F\in \MA$ and $G\in \MB$. This implies that
$I(\MA) I(\MB)\subseteq I$.  It follows that $I(\MA) I(\MB)\subseteq J$, since $x_i$ does  not divide any of the generators of $I(\MA) I(\MB)$.  Now since $I(\MA) I(\MB)\subseteq P$ we conclude  that $I(\MA)\subseteq P$ or $I(\MB)\subseteq P$. As  $P$ is a minimal prime ideal of $J$,  we see  that $y\not\in P$ if  $I(\MA)\subseteq P$ and $x_i\not\in P$ if $I(\MB)\subseteq P$. In any case we obtain a contradiction.
\end{proof}

\medskip
\noindent
{\it $k$-Separation.}
Let $\MA$ and $\MB$ be two finite collections of sets with $F\sect G=\emptyset$ for all $F\in \MA$ and $G\in \MB$.  As it is common, we denote by $\MA*\MB$ the  {\em join} of $\MA$ and $\MB$, where   $\MA*\MB=\{F\union G\:\; F\in \MA, G\in \MB\}$. If  $\Gamma$ and $\Sigma$ are simplicial complexes, then  the join   $\Gamma*\Sigma$ is again a simplicial complex. The vertex set of   $\Gamma*\Sigma$ is the set $V(\Gamma)\union V(\Sigma)$.  In \cite[Lemma 4.3]{AC2}, it was shown that all $T^1_{-\eb_i}(X)$ vanish
whenever $X$ is a combinatorial manifold without boundary.
Now we will show that beyond this special case
the deformations in degree $-\eb_i$ are unobstructed. More precisely, we prove the following statement:

\begin{Proposition}  Assume that $V(\Delta)=[n]$ and $\dim_KT^1(\Delta)_{-\eb_i}=k$.  Then there exists a simplicial complex $\widetilde{\Delta}$ on the vertex set $([n]\setminus \{i\})\union \{v_0,\ldots,v_k\}$ such that $I_{\widetilde{\Delta}}$ is obtained from $I_{\Delta}$ by $k$ times separations for the variable $x_i$. Moreover $T^1(\widetilde{\Delta})_{-\bb}=0$ for any $\bb$ whose support $\mathrm{supp}\ \bb\subseteq \{v_{0},\ldots,v_k\}$.
\end{Proposition}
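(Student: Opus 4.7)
The plan is to exploit equation~(\ref{number2}), which translates $\dim_KT^1(\Delta)_{-\eb_i}=k$ into the assertion that $G_{\{i\}}(\Delta)$ has exactly $k+1$ connected components $\mathcal{C}_0,\mathcal{C}_1,\ldots,\mathcal{C}_k$. Let $\MA_j$ denote the set of minimal elements of $\mathcal{C}_j$; then the minimal monomial generators of $I_\Delta$ divisible by $x_i$ partition as $\bigsqcup_{j=0}^k\{x_ix_F : F\in\MA_j\}$, and I would define $\widetilde\Delta$ on the vertex set $([n]\setminus\{i\})\cup\{v_0,\ldots,v_k\}$ via
\[
I_{\widetilde\Delta}=\bigl(u : u\text{ is a minimal generator of }I_\Delta,\ x_i\nmid u\bigr)+\sum_{j=0}^{k}\bigl(v_jx_F : F\in\MA_j\bigr).
\]
To see that $I_{\widetilde\Delta}$ arises from $I_\Delta$ by $k$ successive separations for $x_i$, I would iterate: at step $\ell\in\{1,\ldots,k\}$, peel off the component $\mathcal{C}_\ell$ by introducing a fresh variable $v_\ell$ that replaces $x_i$ only in the generators indexed by $\MA_\ell$, leaving the generators from the remaining $\MA_j$ attached to $x_i$. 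The argument from the proof of Theorem~\ref{insep}, applied to the bipartition $(\MA,\MB)=\bigl(\mathcal{C}_\ell,\bigcup_{j\neq\ell}\mathcal{C}_j\bigr)$ in the current intermediate complex, verifies the three separation conditions; the key point is that $I(\MA)\cdot I(\MB)$ lies inside the current ideal (faces from distinct components of $G_{\{i\}}$ have non-face unions), so a minimal prime containing $v_\ell-x_i$ would contain both $v_\ell$ and $x_i$ and hence $I(\MA)$ or $I(\MB)$, violating minimality. After $k$ steps, relabel the residual $x_i$ as $v_0$.

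For the vanishing of $T^1(\widetilde\Delta)_{-\bb}$ with $B:=\supp\bb\subseteq\{v_0,\ldots,v_k\}$, I split by $|B|$. When $|B|=1$, say $B=\{v_j\}$, by~(\ref{number2}) it suffices to show that $G_{\{v_j\}}(\widetilde\Delta)$ is connected. For any $F\in N_{\{v_j\}}(\widetilde\Delta)$, set $F^\circ:=F\cap([n]\setminus\{i\})$. Removing the $v_{j'}\in F$ with $j'\neq j$ one at a time produces a chain of inclusions in $N_{\{v_j\}}(\widetilde\Delta)$ from $F$ to $F^\circ$; inspection of the generators of $I_{\widetilde\Delta}$ shows that $F^\circ$ is a face of $\Delta$ containing some element of $\MA_j$, hence $F^\circ\in\mathcal{C}_j$, and the subgraph of $G_{\{v_j\}}(\widetilde\Delta)$ induced on $\mathcal{C}_j$ coincides with the subgraph of $G_{\{i\}}(\Delta)$ induced on $\mathcal{C}_j$, which is connected by construction.

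When $|B|\ge 2$, take any $F\in N_B(\widetilde\Delta)$. Since $F\in\widetilde\Delta$ but $F\cup B\notin\widetilde\Delta$, some minimal generator of $I_{\widetilde\Delta}$ dividing $x_F\cdot x_B$ must have the form $v_{j_l}x_{F''}$ with $v_{j_l}\in B$ and $F''\in\MA_{j_l}$, forcing $F''\subseteq F$. Then already $F\cup\{v_{j_l}\}\notin\widetilde\Delta$, while $\{v_{j_l}\}\subsetneq B$, so $F\in\widetilde N_B(\widetilde\Delta)$. Hence $\widetilde N_B(\widetilde\Delta)=N_B(\widetilde\Delta)$, and Corollary~\ref{useful}(a) gives $\Lambda_B(\widetilde\Delta)=0$, so $T^1(\widetilde\Delta)_{-\bb}=0$. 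The main technical hurdle is the verification of condition (iii) of separation at each intermediate step, which is handled by iterating the Theorem~\ref{insep} argument applied to the appropriate bipartition of the remaining components.
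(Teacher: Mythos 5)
Your proposal is correct and follows essentially the same route as the paper: the same simplicial complex $\widetilde{\Delta}$ (described via its Stanley--Reisner ideal rather than via the decomposition $\Omega*\link_{\Delta}\{i\}\cup\bigcup_{\ell}(\Omega_{\ell}*A_{\ell})$, but these agree), with the $k$-fold separation obtained by iterating the bipartition argument from the proof of Theorem~\ref{insep}, exactly as the paper indicates. In fact you supply more detail than the paper on the final vanishing claim, which the paper only asserts: your verification that $\widetilde{N}_B(\widetilde{\Delta})=N_B(\widetilde{\Delta})$ for $|B|\geq 2$ and that $G_{\{v_j\}}(\widetilde{\Delta})$ is connected via the chain down to $F^{\circ}\in\mathcal{C}_j$ is a sound way to make that step explicit.
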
\begin{proof}
We may decompose $\Delta$ as a disjoint union
\[
\{\emptyset, \{i\}\}*\link_\Delta\{i\}\union N_{\{i\}}(\Delta).
\]
Since  $\dim_KT^1(\Delta)_{-\eb_i}=k$, we have $N_{\{i\}}(\Delta)$ splits into $k+1$ connected components
\[
N_{\{i\}}(\Delta)=A_0\union A_1\union\ldots \union A_k.
\]
As in the proof of Theorem~\ref{insep}, we can express $I_{\Delta}$ as
\[
I_{\Delta}=(\{x_ix_F\:\; F\in \Union_{\ell=0}^k A_{\ell}\}, u_1,\ldots,u_t),
\]
where the generators $u_j$ are not divisible by $x_i$.

We define a new simplicial complex $\widetilde{\Delta}$  on the vertex set $([n]\setminus\{i\})\union \{v_0,v_1,\ldots,v_k\}$ by
\[
\widetilde{\Delta}=\Omega*\link_{\Delta}\{i\}\union \Union_{\ell=0}^{k}(\Omega_{\ell}*A_{\ell}).
\]
Here $\Omega=\langle \{v_0,\ldots,v_k\}\rangle$, and $\Omega_{\ell}=\langle \{v_0,\ldots,v_k\}\setminus \{v_{\ell}\}\rangle$ for $\ell=0,\ldots,k$.

 Set $T=K[x_1,\ldots,x_{i-1},x_{i+1},\ldots,x_n,y_0,\ldots,y_k]$.
 Then $I_{\widetilde{\Delta}}$ is an ideal of $T$ and
$$I_{\widetilde{\Delta}}=(\{y_{\ell}x_F\:\; \ell=0,\ldots, k, F\in A_{\ell}\},  u_1,\ldots,u_t).$$

\noindent
Theorem 1.7 provides the induction step of proving the $k$-separability. Thus applying induction
it can be shown that $S/I$ is isomorphic to $T/I_{\widetilde{\Delta}}$ modulo the regular sequence $y_1-y_0,\ldots,y_k-y_{k-1}$,  and furthermore $T^1(\widetilde{\Delta})_{-\bb}=0$ for any $\bb$ whose support $\mathrm{supp}\ \bb\subseteq \{v_{0},\ldots,v_k\}$.
\end{proof}

We  call  $I_{\widetilde{\Delta}}$ a {\it $k$-separation} of $I_{\Delta}$.

\section{Joins, disjoint unions and  circs of simplicial complexes}
\label{joinsetc}

In this section we consider simplicial complexes arising from pairs of simplicial complexes and study their behaviour with respect to rigidity. Part of the results will be applied to classify rigid  algebras defined by letterplace ideals.

\bigskip
\noindent
{\it Monomial localization.}
In the following localization will be one of the tools in the proofs.  Let $K$ be a field, $S=K[x_1,\ldots,x_n]$ the polynomial ring over $K$, $I\subseteq S$  a monomial ideal and $P\subseteq S$ a monomial prime ideal.  Then $P=P_F$ where $F\subseteq [n]$ and $P_F=(x_i\: i\in F)$.

 We observe that if $(S/I)_P$ denotes ordinary localization of $S/I$ with respect to the prime ideal $P$, then  $(S/I)_P=S_P/I(P)S_P$, where $I(P)\subseteq S(P):=K[x_i\:\; i\in F]$ is the monomial ideal which is obtained from $I$ by the substitution $x_i\mapsto 1$ for $i\not \in F$. The ideal $I(P)$ is called the {\em monomial localization} of $I$ with respect to $P$.

\begin{Lemma}
\label{local}
Let $I\subseteq S$ be a monomial ideal, $P\subseteq S$ a monomial prime ideal, and $F\subseteq [n]$.
\begin{enumerate}
\item[(a)] $T^1(S/I)_P\iso (T^1(S(P)/I(P))[x_i\:\; i\not\in P])_P$.
\item[(b)] Suppose that  $T^1(S(P)/I(P))\neq 0$.  Then $T^1(S/I)\neq 0$.
\item[(c)] Let $I$ be a squarefree monomial ideal. Let $\Delta$ be  the simplicial complex  on the vertex set $[n]$ with  $I_{\Delta}=I$. Then  $S(P_{\overline{F}})/I_{\Delta}(P_{\overline{F}})=S(P_{\overline{F}})/I_\Gamma$, where $\overline{F}=[n]\setminus F$ and $\Gamma=\link_{\Delta}(F)$. In particular, $\Gamma$ is rigid if $\Delta$ is rigid.
\end{enumerate}
\end{Lemma}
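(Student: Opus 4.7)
The plan is to treat the three parts in order, leveraging two standard functorial properties of the cotangent functor $T^1(-/K)$: it commutes with localization at a prime (since $T^1(S/I)$ is the cokernel of a map of finitely generated $S/I$-modules, and $\Hom$ is compatible with localization for finitely presented modules), and it satisfies $T^1(R[y_1,\ldots,y_s]/K)\iso T^1(R/K)\otimes_R R[y_1,\ldots,y_s]$ for every polynomial extension (a consequence of the smoothness of $R\to R[y_1,\ldots,y_s]$).

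For (a), each minimal monomial generator of $I$ factors uniquely as $u\cdot v$ with $u$ involving only the variables in $P$ and $v$ involving only the remaining variables, and $v$ is a unit in $S_P$. Hence $IS_P=I(P)S_P$, and therefore
\[
(S/I)_P \iso \bigl((S(P)/I(P))[x_i : i\notin P]\bigr)_{P'},
\]
where $P'$ is the extension of $P$ to the polynomial ring. Applying $T^1$ and invoking the two properties above gives
\[
T^1(S/I)_P \iso T^1\bigl((S/I)_P\bigr) \iso \bigl(T^1(S(P)/I(P))[x_i : i\notin P]\bigr)_{P'},
\]
which is (a).

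For (b), set $R=S(P)/I(P)$ and assume $T^1(R)\neq 0$. The module $T^1(R)$ is $\ZZ^F$-graded over the positively $\ZZ^F$-graded ring $R$, where $F=\{i : x_i\in P\}$, and $\mathfrak{m}_R=(x_i : i\in F)R$ is the unique maximal homogeneous ideal. Any nonzero homogeneous element of $T^1(R)$ has a proper homogeneous annihilator, which is therefore contained in $\mathfrak{m}_R$, so $T^1(R)_{\mathfrak{m}_R}\neq 0$. The composition $R_{\mathfrak{m}_R}\to \bigl(R[x_i : i\notin P]\bigr)_{P'}$ is a flat local homomorphism, hence faithfully flat, so the tensor product $T^1(R)_{\mathfrak{m}_R}\otimes_{R_{\mathfrak{m}_R}}\bigl(R[x_i : i\notin P]\bigr)_{P'}$ remains nonzero. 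By (a) this module equals $T^1(S/I)_P$, forcing $T^1(S/I)\neq 0$.

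For (c), the identification $I_\Delta(P_{\overline{F}})=I_\Gamma$ is a direct combinatorial calculation. Substituting $x_i\mapsto 1$ for $i\in F$ sends a generator $x_N$ of $I_\Delta$ (with $N\notin\Delta$) to $x_{N\setminus F}$; since $(N\setminus F)\cup F\supseteq N$ is still a non-face of $\Delta$, we have $N\setminus F\notin\Gamma=\link_\Delta F$, giving $x_{N\setminus F}\in I_\Gamma$. Conversely, every minimal non-face $M$ of $\Gamma$ satisfies $M\cup F\notin\Delta$, so $x_M$ is the image of $x_{M\cup F}\in I_\Delta$ under the substitution. The concluding statement that $\Gamma$ inherits rigidity from $\Delta$ is then the contrapositive of (b) applied to the prime $P_{\overline{F}}$, together with the identification $S(P_{\overline{F}})/I_\Gamma=K[\Gamma]$, whose vanishing $T^1$ is exactly the rigidity of $\Gamma$. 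The only conceptually delicate point is the functorial groundwork for (a); once it is in place, (b) and (c) are quick.
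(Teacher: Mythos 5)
Your proposal is correct and follows essentially the same route as the paper: part (a) via localization-compatibility of $T^1$ together with $T^1(R[y])\iso T^1(R)[y]$, part (b) via non-vanishing at the graded maximal ideal of $S(P)$ transported along the (faithfully) flat map to $S_P$, and part (c) by identifying $I_\Delta(P_{\overline F})$ with $I_\Gamma$. The only cosmetic difference is in (c), where you verify the identity generator-by-generator through minimal non-faces while the paper compares the primary decompositions $\bigcap_{G\in\MF(\Delta),\,F\subseteq G}P_{\overline G}$; both are routine and equivalent.
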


\begin{proof}
(a)  Note that $\Der_K(S)=\Hom_S(\Omega_{S/K},S)$, where $\Omega_{S/K}$ is module of differentials of $S/K$, see \cite[Definition, p. 384]{Ei}. Since $\Omega_{S/K}$ localizes (see \cite[Proposition 16.6]{Ei}, the same holds true for $\Der_K(S)$. In other words, $\Der_K(S)_P=\Der_K(S_P)$. From this fact one easily deduces that $T^1(S/I)$ localizes, that is $T^1(S/I)_P=T^1((S/I)_P)$. Therefore,
\begin{eqnarray*}
T^1({S}/{I})_P&\iso& T^1(({S}/{I})_P)\iso T^1(({S}/{I(P)}S)_P)\iso  T^1((S(P)/I(P))[x_i:\ x_i\notin P])_P\\
&\iso &(T^1(S(P)/I(P))[x_i\:\; i\not\in P])_P.
\end{eqnarray*}
The last isomorphism follows from the fact that $T^1(R[y])\iso T^1(R)\tensor_RR[y]=T^1(R)[y]$ for a polynomial extension $R\to R[y]$.

(b) Suppose that $T^1(S(P)/I(P))\neq 0$ and let $\mm_P$ be the graded maximal ideal of $S(P)$ . Then $T^1(S(P)/I(P))_{\mm_P}\neq 0$ because $\mm_P\in \Supp(T^1(S(P)/I(P)))$. It follows that
$(T^1(S(P)/I(P))[x_i\:\; i\not\in P])_P\neq 0$, since $\mm_PS=P$. Hence the assertion follows from part (a).

(c) Since $\mathcal{F}(\Gamma)=\{G\setminus F\:\; G\in \MF(\Delta), F\subseteq G\}$, we obtain that $I_\Gamma\subseteq S(P_{\overline{F}})$ is given by
\[
I_{\Gamma}=\bigcap_{G\in \mathcal{F}(\Delta),F\subseteq G} P_{\overline {F}\setminus (G\setminus F)}=\bigcap_{G\in \mathcal{F}(\Delta),F\subseteq G} P_{\overline{G}}.
\]
On the other hand,
\[
I_{\Delta}(P_{\overline{F}})=\bigcap_{G\in \mathcal{F}(\Delta)}P_{\overline{G}}(P_{\overline{F}})=\bigcap_{G\in \mathcal{F}(\Delta),F\subseteq G} P_{\overline{G}}.
\]
Hence $I_{\Gamma}=I_{\Delta}(P_{\overline{F}})$.
\end{proof}

Note that the fact stated in Lemma~\ref{local}(c) which says that each link of a rigid simplicial complex is again rigid can also be deduced from  Proposition~\ref{Proposition11}(b).

\medskip

\begin{Example}{\em Let $I$ be a squarefree monomial ideal of $K[x_1,\ldots,x_n]$ generated in degree $(n-1)$.

(a) If $n=3$, then $I$ is rigid if and only if $I$ is generated by two monomials.

(b) If $n\geq 4$, then $I$ is not rigid.}
\end{Example}

\begin{proof} In the following we may assume that $I$ is not a principal ideal, because in this case $I$ is a complete intersection generated in degree $\ge 2$, and hence $I$ is not rigid.

(a) For $n=3$, $I$ is an edge  ideal of a nonempty simple graph with 3 vertices. Edge ideals will be treated in detail on Section~\ref{graphs}. A nonempty simple graph with 3 vertices is an isolated edge or a path of length 2 or a triangle.  It follows from Theorem~\ref{chordal} that in those graph, only the second is rigid. This proves our result.

(b) We proceed by induction on $n$.  Assume $n=4$. If $I$ is generated by 4 monomials, then the monomial localization $I(P)$ of $I$ with respect to $P=(x_1,x_2,x_3)$ is the edge ideal of a triangle, and so $I(P)$ is not rigid because of Theorem~\ref{chordal}. By Lemma~\ref{local}(b) it follows that $I$ is not rigid.  If $I$ is generated by 3 monomials, then $I$ is the product of a variable, say $x_1$,  and the  edge ideal $(x_2x_3, x_2x_4,x_3x_4)$ of a triangle, and so it is not rigid. Indeed, this follows again from Lemma~\ref{local}(b) by monomial localization with respect to $P=(x_2,x_3,x_4)$. If $I$ is generated by 2 monomials, then it is the product of a monomial of degree 2  and an ideal generated by variables, and again $I$ is not rigid. Assume $n>4$. If $I$ is generated by $n$ monomials, then the monomial localization $I(P)$ of $I$ with respect to $P=(x_1,x_2,x_3)$ is the edge ideal of a triangle, and so $I$ is not rigid. If $I$ is generated by less than $n$ monomials, then $I$ is the product of a variable and an ideal $J$ which is generated  in degree $n-2$ in the remaining $n-1$ variables. It follows from the induction hypothesis that $J$ is not rigid and so $I$ is not rigid.
\end{proof}

\medskip
\noindent
{\it  Joins.} Note that the notation ``join" has  been defined in the first section. Thus,  if  $\Delta_1$ and $\Delta_2$ are simplicial complexes on disjoint vertex sets, then  the join   $\Delta_1*\Delta_2$ is  a simplicial complex on the vertex set $V(\Delta_1)\union V(\Delta_2)$.

In the remaining part of this subsection we always assume that
 $\Delta_1$ and $\Delta_2$ are simplicial complexes with disjoint vertex sets $E_1$ and $E_2$ respectively.
\begin{Proposition}\label{join}  Let $\mathbf{a}_i\in \mathbb{N}^{E_1\cup E_2}$ and $\mathbf{b}_i\in \{0,1\}^{E_1\cup E_2}$ for $i=1,2$ such that $\mathrm{supp}(\mathbf{a}_i)\cap \mathrm{supp}(\mathbf{b}_i)=\emptyset$,  $\mathrm{supp}(\mathbf{a}_i)\cup \mathrm{supp} (\mathbf{b}_i)\subseteq [\Delta_i]$.  Then
\[T^1(\Delta_1\ast \Delta_2)_{\mathbf{a}_1+\mathbf{a}_2-\mathbf{b}_1-\mathbf{b}_2}=\left\{
                                                                                   \begin{array}{ll}
                                                                                     0, & \hbox{$\mathbf{b}_1\neq 0$ and $\mathbf{b}_2\neq 0$;} \\
                                                                                     0, & \hbox{either $\mathrm{supp}(\mathbf{a}_1)\notin \Delta_1$ or $\mathrm{supp}(\mathbf{a}_2)\notin \Delta_2$;} \\
                                                                                    T^1(\Delta_1)_{\mathbf{a}_1-\mathbf{b}_1} , & \hbox{$\mathbf{b}_2=0$ and $\mathrm{supp}(\mathbf{a}_2)\in \Delta_2$;} \\

                                                                                    T^1(\Delta_2)_{\mathbf{a}_2-\mathbf{b}_2} , & \hbox{$\mathbf{b}_1=0$ and $\mathrm{supp}(\mathbf{a}_1)\in \Delta_1$.}
                                                                                   \end{array}
                                                                                 \right.
\]

 In  particular $\Delta_1 *\Delta_2$
 is rigid  if and only if $\Delta_1$ and $\Delta_2$ are rigid.
\end{Proposition}

\begin{proof}
 \medskip
 If either $\mathrm{supp}(\mathbf{a}_1)\notin \Delta_1$ or $\mathrm{supp}(\mathbf{a}_2)\notin \Delta_2$ then $T^1(\Delta_1\ast \Delta_2)_{\mathbf{a}_1+\mathbf{a}_2-\mathbf{b}_1-\mathbf{b}_2}=0$ by Proposition~\ref{Proposition11}. If  $\mathrm{supp}(\mathbf{a}_1)\in \Delta_1$ and $\mathrm{supp}(\mathbf{a}_2)\in \Delta_2$, we will prove the following equality in several steps.
$$T^1(\Delta_1\ast \Delta_2)_{\mathbf{a}}=(T^1(\Delta_1)[y_1,\ldots,y_m])_{\mathbf{a}}\oplus (T^1(\Delta_2)[x_1,\ldots,x_n])_{\mathbf{a}}.$$
Here, we denote $\mathbf{a}=\mathbf{a}_1+\mathbf{\mathbf{\mathbf{a}}}_2-\mathbf{b}_1-\mathbf{b}_2$, $E_1=\{x_1,\ldots,x_n\}$ and $E_2=\{y_1,\ldots,y_m\}$.  Note that Proposition~\ref{join} follows immediately from this equality.

 (i) First, Observing that links commute with joins, it is enough to prove that
\[
T^1(\Delta_1*\Delta_2)_{-(\mathbf{b}_1+\bb_2)}=(T^1(\Delta_1)[y_1,\ldots,y_m])_{-(\bb_1+\bb_2)}\dirsum (T^1(\Delta_2)[x_1,\ldots,x_n])_{-(\bb_1+\bb_2)},
\]
where $B_i:=\supp\bb_i\subseteq E_i$ for $i=1,2$.

\medskip
(ii) Then we show that $$N_{B_1\union B_2}(\Delta_1*\Delta_2)=[N_{B_1}(\Delta_1)*(\Delta_2\setminus B_2)]\union [(\Delta_1\setminus B_1)* N_{B_2}(\Delta_2)],$$
where $\Delta_i\setminus B_i=\{F\in\Delta_i\:\; F\sect B_i=\emptyset\}$ for $i=1,2$.

\medskip
 Let $F_1\cup F_2\in N_{B_1\union B_2}(\Delta_1*\Delta_2)$ with $F_i\in \Delta_i$ for $i=1,2$. Then $F_i\in \Delta_i\setminus B_i$ for $i=1,2$ and $(F_1\cup F_2)\cup(B_1\cup B_2)\notin \Delta_1*\Delta_2$ by definition. It follows that for at least one $i\in \{1,2\}$, $F_i\cup B_i\notin \Delta_i$, namely, $F_i\in N_{B_i}(\Delta_i)$. This actually proves the containment $\subseteq$. The other containment  can be proved similarly.

\medskip
(iii)  Suppose that $B_i\neq \emptyset$ for $i=1,2$.  We will show that both sides of the identity in step (i) vanishes.

Let $F_1\cup F_2\in N_{B_1\union B_2}(\Delta_1*\Delta_2)$ with $F_i\in \Delta_i$ for $i=1,2$. Then $F_i\in N_{B_i}(\Delta_i)$,  for at least one $i\in \{1,2\}$ by the proof of (ii), say $i=1$. It follows that $(F_1\cup F_2)\cup B_1\notin \Delta_1*\Delta_2$. Since $B_1\subsetneq B_1\cup B_2$, we have $F_1\cup F_2\in \widetilde{N}_{B_1\union B_2}(\Delta_1*\Delta_2)$. Hence $N_{B_1\union B_2}(\Delta_1*\Delta_2)=\widetilde{N}_{B_1\union B_2}(\Delta_1*\Delta_2)$. This implies that the left side of the identity in step (i) vanishes.

To see that the right side also vanishes, we note that a multi-homogeneous element in $T^1(\Delta_1)[y_1,\ldots,y_m]$ has the form $ty_1^{a_1}\cdots y_m^{a_m}$, where $t$ is a multi-homogeneous element in $T^1(\Delta_1)$ and $a_i\geq 0$ for $i=1,\ldots,m$. But $y_1^{a_1}\cdots y_m^{a_m}$ has the
multi-homogeneous degree for which each coordinate is greater or equal than zero. Hence $(T^1(\Delta_1)[y_1,\ldots,y_m])_{-(\bb_1+\bb_2)}=0$. Similarly  $(T^1(\Delta_2)[x_1,\ldots,x_n])_{-(\bb_1+\bb_2)}=0$.

\medskip
(iv) If one of the $B_i$ is the empty set, say $B_2 =\emptyset$, then  (ii) implies that
\[
N_{B_1}(\Delta_1*\Delta_2)=N_{B_1}(\Delta_1)*\Delta_2 \text{ and }  \widetilde{N}_{B_1}(\Delta_1*\Delta_2)=\widetilde{N}_{B_1}(\Delta_1)*\Delta_2.
\]
 This   yields $T^1(\Delta_1*\Delta_2)_{-\bb_1}= T^1(\Delta_1)_{-\bb_1}=(T^1(\Delta_1)[y_1,\ldots,y_m])_{-\bb_1}$, as desired.
 \end{proof}

Proposition~\ref{join} can be rewritten as the following nice formula, which is suggested by William Bitsch, whose email is  william.bitsch@fu-berlin.de.

\begin{Corollary} For all $\mathbf{z}\in\ZZ^{E_1\cup E_2}$, we have
\[
T^1(\Delta_1\ast \Delta_2)_{\mathbf{z}}=(T^1(\Delta_1)\otimes k[\Delta_2])_{\mathbf{z}}\oplus (T^1(\Delta_1)\otimes k[\Delta_2])_{\mathbf{z}}\qquad .
\]
\end{Corollary}

\begin{proof} First we write $\mathbf{z}$ as $\mathbf{a}_1+\mathbf{a}_2-\mathbf{b}_1-\mathbf{b}_2$ with $\mathrm{supp}(\mathbf{a}_i)\cap \mathrm{supp}(\mathbf{b}_i)=\emptyset$,  $\mathrm{supp}(\mathbf{\mathbf{a}}_i)\cup \mathrm{supp} (\mathbf{b}_i)\subseteq E_i$,  $\mathbf{a}_i\in \mathbb{N}^{E_1\cup E_2}$ and $\mathbf{b}_i\in \{0,1\}^{E_1\cup E_2}$ for $i=1,2$.

\vspace{3mm}

Note that as a multigraded linear $k$-space,  $$k[\Delta_2]\cong \bigoplus_{\mathbf{a}\in \mathbb{N}^{E_2}, \mathrm{supp}(\mathbf{a})\in \Delta_2} ky^{\mathbf{a}}.$$

Hence $$(T^1(\Delta_1)\otimes k[\Delta_2])_{\mathbf{a}_1+\mathbf{a}_2-\mathbf{\mathbf{b}}_1-\mathbf{b}_2}=\bigoplus_{\mathbf{a}\in \mathbb{N}^{E_2}, \mathrm{supp}(\mathbf{a})\in \Delta_2}T^1(\Delta_1)_{\mathbf{a}_1+\mathbf{a}_2-\mathbf{a}-\mathbf{b}_1-\mathbf{b}_2}$$

For $\mathbf{a}\in \mathbb{N}^{E_2}$, it is easy to see that $T^1(\Delta_1)_{\mathbf{a}_1+\mathbf{a}_2-\mathbf{a}-\mathbf{b}_1-\mathbf{b}_2}\neq 0$ if and only $T^1(\Delta_1)_{\mathbf{a}_1-\mathbf{b}_1}\neq 0$  and $\mathbf{a}= \mathbf{a}_2, \mathbf{b}_2=0.$ This implies  $$(T^1(\Delta_1)\otimes k[\Delta_2])_{\mathbf{a}_1+\mathbf{a}_2-\mathbf{b}_1-\mathbf{b}_2}\cong \left\{
                                                                                   \begin{array}{ll}
                                                                                     T^1(\Delta_1)_{\mathbf{a}_1-\mathbf{b}_1}, & \hbox{ $\mathbf{b}_2= 0$ and $\mathrm{supp}(\mathbf{a}_2)\in \Delta_2\  $;} \\
                                                                                     0, & \hbox{otherwise}
                                                                                    \end{array}
                                                                                 \right.
$$  From this the fomula follows in view of  Proposition~\ref{join}.
 \end{proof}

\bigskip
\noindent
{\it Disjoint unions.} Next we consider the simplicial complex $\Delta_1\union \Delta_2$ which is the disjoint union of $\Delta_1$ and $\Delta_2$. The vertex set of $\Delta_1\union \Delta_2$ is $V(\Delta_1)\union V(\Delta_2)$ and $F \in \Delta_1\union \Delta_2$ if and only if $F\in \Delta_1$ or $F\in \Delta_2$.

\begin{Theorem}
\label{union}
Let $\Delta_1\neq\{\emptyset\}$ and $\Delta_2\neq \{\emptyset\}$ be simplicial complexes with disjoint vertex sets.  Assume that for $i=1,2$, $V(\Delta_i)=[\Delta_i]$, that is, $\{j\}\in \Delta_i$ for all $j\in V(\Delta_i)$.
\begin{enumerate}
\item[(a)] The following conditions are equivalent:
\begin{enumerate}
\item[(1)] $\Delta_1\union \Delta_2$ is inseparable;
\item[(2)] $\Delta_1$ and $\Delta_2$ are simplices.
\end{enumerate}
\item[(b)] The following conditions are equivalent:
\begin{enumerate}
\item[(1)] $\Delta_1 \union \Delta_2$ is rigid;
\item[(2)] $\Delta_1\union \Delta_2$ is $\emptyset$-rigid;
\item[(3)] $\Delta_1$ and $\Delta_2$ are simplices with  $\dim \Delta_1 +\dim \Delta_2>0$.
\end{enumerate}
\end{enumerate}
\end{Theorem}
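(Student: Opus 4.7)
The plan is to prove (a) by directly analyzing the connectivity of the graphs $G_{\{i\}}(\Delta_1 \cup \Delta_2)$ via Theorem~\ref{insep}, and then reduce (b) to the key case $A=\emptyset$ by exploiting the fact that non-empty links inside $\Delta_1 \cup \Delta_2$ are simplices.

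For (a), I would fix a vertex, say $i \in V(\Delta_1)$, and decompose $N_{\{i\}}(\Delta_1 \cup \Delta_2)$ into the piece $N_{\{i\}}(\Delta_1)$ coming from $\Delta_1$ and the piece of all non-empty faces of $\Delta_2$ (since for any non-empty $F\in\Delta_2$ the set $F \cup \{i\}$ contains vertices from both sides and thus lies in neither $\Delta_1$ nor $\Delta_2$). Because any two faces coming from opposite sides have disjoint supports, there are no edges of $G_{\{i\}}(\Delta_1 \cup \Delta_2)$ between the two pieces. If $\Delta_1$ is not a simplex, choose a minimal non-face $S$ of $\Delta_1$ and any $i\in S$; then $S\setminus\{i\} \in N_{\{i\}}(\Delta_1)$, so both pieces are non-empty and the graph is disconnected. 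Conversely, if $\Delta_1,\Delta_2$ are both simplices, then for $i\in V(\Delta_1)$ the $\Delta_1$-piece is empty and the $\Delta_2$-piece consists of all non-empty subsets of $V(\Delta_2)$, each properly contained in $V(\Delta_2)\in \Delta_2$, so the graph is star-connected. Together with Theorem~\ref{insep}, this yields (a).

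For (b), the implication (1)$\Rightarrow$(2) is immediate from the definitions. For (2)$\Rightarrow$(3), $\emptyset$-rigidity forces $T^1(\Delta)_{-\eb_i}=0$, so $\Delta_1\cup\Delta_2$ is inseparable and (a) gives that both components are simplices; if in addition $\dim\Delta_1=\dim\Delta_2=0$, then $\Delta_1\cup\Delta_2=\{\emptyset,\{v\},\{w\}\}$ and taking $B=\{v,w\}$ yields $N_B(\Delta)=\{\emptyset\}$ with $\widetilde{N}_B(\Delta)=\emptyset$, so Lemma~\ref{Bnot}(a) gives $T^1(\Delta)_{-\bb}\neq 0$, a contradiction.

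For (3)$\Rightarrow$(1), Proposition~\ref{Proposition11}(b) reduces rigidity to $\emptyset$-rigidity of $\link_\Delta A$ for every $A\in\Delta$. When $A\neq\emptyset$, assume without loss of generality $A\in\Delta_1$; since $\Delta_2$-faces cannot extend $A$, one checks $\link_\Delta A$ is the simplex on $V(\Delta_1)\setminus A$, which is rigid. It remains to show $\Delta$ itself is $\emptyset$-rigid, which I would treat by cases on $B=\supp\bb$. The case $|B|=1$ follows from inseparability (part (a)) and Theorem~\ref{insep}. If $|B|\geq 2$ and $B\subseteq V(\Delta_j)$ for some $j$, then $N_B(\Delta)$ equals the set of non-empty faces of the other component, and for any such $F$ one may take a non-empty $B'\subsetneq B$ on the $j$-side so that $F\cup B'$ mixes the two sides and hence fails to lie in $\Delta$; thus $\widetilde{N}_B(\Delta)=N_B(\Delta)$, and Corollary~\ref{useful}(a) gives $T^1(\Delta)_{-\bb}=0$. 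If $|B|\geq 2$ with $B$ meeting both vertex sets, then $B\notin\Delta$, so by Lemma~\ref{Bnot}(a) it suffices to produce a member of $\widetilde{N}_B(\Delta)$: when $|B|\geq 3$, pick one vertex from each side of $B$ to form a proper non-face $B'\subsetneq B$, showing $\emptyset\in\widetilde{N}_B(\Delta)$; when $|B|=2$ with one vertex on each side, the dimension hypothesis lets me choose an extra vertex $v'$ on whichever side has $\dim\Delta_j\geq 1$, and then $\{v'\}\in\widetilde{N}_B(\Delta)$ via $B'$ equal to the opposite-side vertex of $B$. The main obstacle is this last sub-case, where the assumption $\dim\Delta_1+\dim\Delta_2>0$ is indispensable.
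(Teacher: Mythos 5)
Your proposal is correct, and parts (a), (b)(1)$\Rightarrow$(2) and (b)(2)$\Rightarrow$(3) follow the paper's proof almost verbatim: the same decomposition $N_{\{i\}}(\Delta)=N_{\{i\}}(\Delta_1)\cup(\Delta_2\setminus\{\emptyset\})$ with no edges between the two pieces, and the same $(xy)$-type counterexample when both simplices are points. Where you genuinely diverge is in (3)$\Rightarrow$(1): the paper simply observes that $I_\Delta=(x_iy_j)$ is a product of two monomial primes, at most one of which is principal, and invokes Corollary~\ref{product} --- a result that is itself derived from the circ machinery of Theorem~\ref{circ} and appears \emph{after} Theorem~\ref{union} in the text. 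You instead verify rigidity directly: non-empty links are simplices (up to ghost vertices from the other component, which are harmless by Proposition~\ref{Proposition11}(a)), and $\emptyset$-rigidity is checked by cases on $B$, using $\widetilde{N}_B=N_B$ when $B$ sits in one component and Lemma~\ref{Bnot}(a) when $B$ meets both. Your route is longer but self-contained, avoids the paper's forward reference, and has the virtue of isolating exactly where the hypothesis $\dim\Delta_1+\dim\Delta_2>0$ is used (the mixed $|B|=2$ sub-case); the paper's route is a one-liner once the circ theory is available and illustrates the intended use of that theory. Both arguments are sound.
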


\begin{proof}
Let $\Delta=\Delta_1\cup \Delta_2$.

(a) (1)\implies(2):  Since $\Delta$ is inseparable,  Theorem~\ref{insep} implies that the graph $G_{\{i\}}(\Delta)$ is connected for all $i\in V(\Delta_1)\cup V(\Delta_2)$. Let $i\in V(\Delta_1)$.  By assumption we have $\{i\}\in \Delta$. It follows that  $N_{\{i\}}(\Delta)= N_{\{i\}}(\Delta_1)\cup \Delta_2\setminus \{\emptyset\}$. Since  $N_{\{i\}}(\Delta_1)$ and $\Delta_2\setminus \{\emptyset\}$ belong to different connected components  of  $G_{\{i\}}(\Delta)$ and since $G_{\{i\}}(\Delta)$  is connected, it follows that   either $N_{\{i\}}(\Delta_1)=\emptyset$ or $\Delta_2\setminus \{\emptyset\}=\emptyset$. The second case is ruled out by assumption. Hence, $N_{\{i\}}(\Delta_1)=\emptyset$. This implies that $i\in F$ for all $F\in \MF(\Delta_1)$. Since $i$ is an arbitrary element in $V(\Delta_1)$ we see that $\MF(\Delta_1)=\{V(\Delta_1)\}$. Starting with $i\in V(\Delta_2)$, the same argument proves that $\Delta_2$ is also a simplex.

%By the definition of $G_{\{i\}}(\Delta)$, none of the elements of $N_{\{i\}}(\Delta_1)$ is connected to any element of $\Delta_2\setminus \{\emptyset\}$ in $G_{\{i\}}(\Delta)$. Thus either $N_{\{i\}}(\Delta_1)=\emptyset$ or $\Delta_2\setminus \{\emptyset\}=\emptyset$, because $G_{\{i\}}(\Delta)$ is connected. By assumption we have $\Delta_2\setminus \{\emptyset\}\neq \emptyset$. Therefore $N_{\{i\}}(\Delta_1)=\emptyset$. we have

(2)\implies (1): By Theorem~\ref{insep}, it is enough to show that $G_{\{i\}}(\Delta)$ is connected for all $i\in V(\Delta_1)\cup V(\Delta_2)$. Let $i\in V(\Delta_1)$.  As mentioned above we have $N_{\{i\}}(\Delta)= N_{\{i\}}(\Delta_1)\cup \Delta_2\setminus \{\emptyset\}$. Since   $\Delta_1$ is a simplex and $\{i\}\in \Delta_1$ it follows that $N_{\{i\}}(\Delta_1)=\emptyset$. Thus  $N_{\{i\}}(\Delta)= \Delta_2\setminus \{\emptyset\}$. Therefore $G_{\{i\}}(\Delta)$ is connected because $\Delta_2$ is a simplex. A similar argument shows that $G_{\{i\}}(\Delta)$ is also  connected for all $i\in V(\Delta_2)$.

(b) (1)\implies (2) is obvious.

(2)\implies (3):  Since $\Delta$ is $\emptyset$-rigid we have $T^1(\Delta)_{-\bb}=0$ for all $\bb\in \{0,1\}^{V(\Delta_1)\cup V(\Delta_2)}$. In particular, $T^1(\Delta)_{-\eb_i}=0$ for all $i\in V(\Delta_1)\cup V(\Delta_2)$. So by Theorem~\ref{insep}, $\Delta$ is inseparable. Thus using  part (a) we have that $\Delta_1$ and $\Delta_2$ are simplices. Suppose that  $\dim \Delta_1 +\dim \Delta_2=0$. Then $\dim \Delta_1=\dim \Delta_2=0$, and hence $I_\Delta$ is of the form $(xy)$. It follows that $T^1(\Delta)_{-(1,1)}\neq 0$, a contradiction.

(3)\implies (1): The assumptions imply that $I_\Delta$ is of the form $(x_iy_j\: i=1,\ldots,n, \; j=1,\ldots,m)$ with $n\geq 2$ or $m\geq 2$. Corollary~\ref{product} implies that $\Delta$ is rigid.
\end{proof}

\begin{Corollary}
\label{flower}
Let $\Delta$ be  simplicial complex. Then $\Delta$ is separable if $\Delta$ has more than two connected components.
\end{Corollary}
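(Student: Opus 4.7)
My plan is to reduce the problem directly to Theorem~\ref{union}(a). First I would write $\Delta$ as a disjoint union $\Delta = \Delta_1 \cup \Delta'$, where $\Delta_1$ is any single connected component of $\Delta$ and $\Delta' := \Delta_2 \cup \cdots \cup \Delta_r$ collects the remaining $r-1 \geq 2$ components. The two pieces live on disjoint vertex sets and are both non-empty (in particular, neither equals $\{\emptyset\}$). Under the standing convention $V(\Delta) = [\Delta]$, i.e., every vertex of $\Delta$ is itself a face, this property is inherited by $\Delta_1$ and $\Delta'$, so the hypotheses of Theorem~\ref{union}(a) are satisfied.

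The key observation is that $\Delta'$ cannot be a simplex: every simplex is connected, whereas $\Delta'$ is a disjoint union of at least two non-empty subcomplexes and hence disconnected. By the equivalence (1)$\iff$(2) in Theorem~\ref{union}(a), the complex $\Delta = \Delta_1 \cup \Delta'$ is inseparable if and only if both $\Delta_1$ and $\Delta'$ are simplices. Since this necessary condition fails for $\Delta'$, we conclude that $\Delta$ is separable, which is the desired statement.

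The only subtlety I foresee is checking the hypothesis $V(\Delta_i) = [\Delta_i]$ of Theorem~\ref{union} for the two pieces of the decomposition; this is immediate from the definition of a connected component of a simplicial complex (each connected component is a full subcomplex on its vertex set). No direct computation with $T^1$ is needed, since all of that work is already packaged inside Theorem~\ref{union}.
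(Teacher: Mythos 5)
Your proof is correct and matches the paper's (implicit) argument: the corollary is stated without proof immediately after Theorem~\ref{union}, and the intended deduction is exactly your grouping $\Delta=\Delta_1\cup\Delta'$ with $\Delta'$ disconnected, hence not a simplex, so that Theorem~\ref{union}(a) forces separability. Your remark on verifying $V(\Delta_i)=[\Delta_i]$ for the two pieces is a sensible extra check and does not change the argument.
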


\bigskip
\noindent
{\it Circs.} For $i=1,2$, let $\Delta_i$ be a  simplicial complex on the vertex set $V_i$ and assume that $V_1\sect V_2=\emptyset$.  Then  the {\em circ}  of $\Delta_1$ and $\Delta_2$ is the simplicial complex $\Delta_1\circ\Delta_2$  with vertex set $V_1\cup V_2$ whose faces  are those subsets $F$ of $V_1\cup V_2$ for which either   $F\cap V_1$ is a face of $\Delta_1$ or  $F\cap V_2$ is a face of $\Delta_2$.

It is worthwhile to note that if $I_{\Delta_1}\subseteq S_1=K[x_1,\ldots,x_n]$ and $I_{\Delta_2}\subseteq S_2=K[y_1,\ldots,y_m]$ then
$I_{\Delta_1\circ \Delta_2}=I_{\Delta_1}I_{\Delta_2}S$, where $S=K[x_1,\ldots,x_n,y_1\,\ldots,y_m]$.

\medskip
In the following we set  $M_B(\Delta)=\{F\subseteq V(\Delta)\:\; F\notin \Delta \mbox{\ and\ } F\cap B=\emptyset\}$ for any $B\subseteq V(\Delta)$.
For later use we list a few obvious facts in the next lemmata.
%The following lemma is an immediate consequence of the definitions of the above sets.
\medskip

%The following lemma is an immediate consequence of the definitions of the above sets.
\begin{Lemma}
 \label{N}

 \vspace{2mm}

Let $B=B_1\cup B_2$ with $\emptyset \neq B_i\subseteq V_i$ for $i=1,2$. Then
\begin{enumerate}
\item[(a)]
$ N_B(\Delta_1\circ\Delta_2)=N_{B_1}(\Delta_1)*N_{B_2}(\Delta_2)\union N_{B_1}(\Delta_1)*M_{B_2}(\Delta_2)\newline
\mbox{\qquad \qquad \qquad \quad} \union M_{B_1}(\Delta_1)*N_{B_2}(\Delta_2).$
\vspace{2mm}

\item[(b)]  $N_{B_1}(\Delta_1\circ\Delta_2)=N_{B_1}(\Delta_1)*M_{\emptyset}(\Delta_2)$ and $N_{B_2}(\Delta_1\circ\Delta_2)=M_{\emptyset}(\Delta_1)*N_{B_2}(\Delta_2)$.

\end{enumerate}
 \end{Lemma}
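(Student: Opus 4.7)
The plan is to reduce both statements to direct set-theoretic bookkeeping using the defining property of the circ: a subset $F\subseteq V_1\cup V_2$ lies in $\Delta_1\circ\Delta_2$ iff $F\cap V_1\in\Delta_1$ or $F\cap V_2\in\Delta_2$, and dually $F\notin \Delta_1\circ\Delta_2$ iff both $F\cap V_1\notin\Delta_1$ and $F\cap V_2\notin\Delta_2$. I would write any $F\subseteq V_1\cup V_2$ as $F=F_1\cup F_2$ with $F_i:=F\cap V_i$, and translate the three defining conditions of $N_B(\Delta_1\circ\Delta_2)$ (namely $F\in \Delta_1\circ\Delta_2$, $F\cap B=\emptyset$, and $F\cup B\notin \Delta_1\circ\Delta_2$) into conditions on the pair $(F_1,F_2)$.

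For (a), with $B=B_1\cup B_2$ and $B_i\subseteq V_i$ nonempty, the condition $F\cap B=\emptyset$ becomes $F_i\cap B_i=\emptyset$ for $i=1,2$, while $F\cup B\notin \Delta_1\circ\Delta_2$ becomes $F_1\cup B_1\notin\Delta_1$ \emph{and} $F_2\cup B_2\notin\Delta_2$. The remaining condition $F\in \Delta_1\circ\Delta_2$ splits into the disjunction $F_1\in\Delta_1$ or $F_2\in\Delta_2$. I would then do the three-case analysis: (i) $F_1\in\Delta_1$ and $F_2\in\Delta_2$ gives $F_1\in N_{B_1}(\Delta_1)$ and $F_2\in N_{B_2}(\Delta_2)$; (ii) $F_1\in\Delta_1$ and $F_2\notin\Delta_2$ gives $F_1\in N_{B_1}(\Delta_1)$ and $F_2\in M_{B_2}(\Delta_2)$ (the condition $F_2\cup B_2\notin\Delta_2$ is automatic from $F_2\notin\Delta_2$ together with the downward closure, since any enlargement of a non-face is a non-face); (iii) symmetric to (ii). This yields the inclusion $\subseteq$; the reverse inclusion is immediate by checking each of the three joins satisfies all three defining conditions of $N_B(\Delta_1\circ\Delta_2)$.

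For (b), I would just specialize the same analysis with $B=B_1$ (so $B_2=\emptyset$). The condition $F\cup B_1\notin\Delta_1\circ\Delta_2$ now reads $F_1\cup B_1\notin\Delta_1$ and $F_2\notin\Delta_2$. In particular $F_2\notin\Delta_2$, so the disjunction $F_1\in\Delta_1$ or $F_2\in\Delta_2$ forces $F_1\in\Delta_1$, hence $F_1\in N_{B_1}(\Delta_1)$ and $F_2\in M_{\emptyset}(\Delta_2)$. Conversely, any such pair satisfies all three conditions. The identity for $N_{B_2}(\Delta_1\circ\Delta_2)$ is obtained by exchanging the roles of $\Delta_1$ and $\Delta_2$.

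Since this is purely definitional unpacking, there is no real obstacle; the only point that needs care is keeping track that in (a) \emph{at least one} of $F_1\cup B_1\notin\Delta_1$ or $F_2\cup B_2\notin\Delta_2$ is not what is assumed — rather \emph{both} must hold — which is precisely why the three pieces on the right-hand side of (a) are joined with $N_{\bullet}$ in at least one of the two slots, but never both slots with $M_{\bullet}$.
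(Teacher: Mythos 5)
Your proof is correct: the paper states Lemma~\ref{N} without proof (it is listed among ``obvious facts''), and your definitional unpacking --- splitting $F=F_1\cup F_2$ with $F_i=F\cap V_i$, translating the three conditions defining $N_B(\Delta_1\circ\Delta_2)$ via the circ criterion, and doing the case analysis on the disjunction $F_1\in\Delta_1$ or $F_2\in\Delta_2$ --- is exactly the intended argument. Your closing remark also correctly explains why no $M_{B_1}(\Delta_1)*M_{B_2}(\Delta_2)$ term appears: such an $F$ would fail to be a face of $\Delta_1\circ\Delta_2$ at all.
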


\vspace{2mm}

\begin{Lemma}
\label{tildeN}
Let  $B=B_1\cup B_2$ with $\emptyset \neq B_i\subseteq V_i$ for $i=1,2$. Then
\begin{itemize}

\vspace{2mm}

\item[(a)]
$\widetilde{N}_B(\Delta_1\circ\Delta_2)=N_{B_1}(\Delta_1)*\widetilde{N}_{B_2}(\Delta_2)\cup \widetilde{N}_{B_1}(\Delta_1)*N_{B_2}(\Delta_2)\newline
 \mbox{\qquad \qquad \qquad\quad} \union  N_{B_1}(\Delta_1)*M_{B_2}(\Delta_2)\union M_{B_1}(\Delta_1)*N_{B_2}(\Delta_2).$
\vspace{2mm}

\item[(b)]  $\widetilde{N}_{B_1}(\Delta_1\circ\Delta_2)=\widetilde{N}_{B_1}(\Delta_1)*M_{\emptyset}(\Delta_2)$ and $\widetilde{N}_{B_2}(\Delta_1\circ\Delta_2)=M_{\emptyset}(\Delta_1)*\widetilde{N}_{B_2}(\Delta_2)$.
\end{itemize}
\end{Lemma}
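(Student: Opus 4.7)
The plan is to prove part (a) by starting from the decomposition of $N_B(\Delta_1\circ\Delta_2)$ supplied by Lemma~\ref{N}(a), and then determining, piece by piece, which elements belong to $\widetilde{N}_B(\Delta_1\circ\Delta_2)$. Note that the three pieces in Lemma~\ref{N}(a) are pairwise disjoint, because for $F=F_1\cup F_2$ with $F_i=F\cap V_i$, the pairs $(F_1\in\Delta_1,F_2\in\Delta_2)$, $(F_1\in\Delta_1,F_2\notin\Delta_2)$, $(F_1\notin\Delta_1,F_2\in\Delta_2)$ are mutually exclusive. So I will handle each piece separately.

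The main work lies in the first piece $N_{B_1}(\Delta_1)*N_{B_2}(\Delta_2)$. Here I claim $F=F_1\cup F_2\in\widetilde{N}_B(\Delta_1\circ\Delta_2)$ if and only if $F_1\in\widetilde{N}_{B_1}(\Delta_1)$ or $F_2\in\widetilde{N}_{B_2}(\Delta_2)$. For the ``if'' direction, given $B_1'\subsetneq B_1$ with $F_1\cup B_1'\notin\Delta_1$, take $B'=B_1'\cup B_2\subsetneq B$; then $(F\cup B')\cap V_1=F_1\cup B_1'\notin\Delta_1$ and $(F\cup B')\cap V_2=F_2\cup B_2\notin\Delta_2$ (as $F_2\in N_{B_2}(\Delta_2)$), so $F\cup B'\notin\Delta_1\circ\Delta_2$; the symmetric construction handles the other case. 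For the ``only if'' direction, write any $B'\subsetneq B$ witnessing $F\in\widetilde{N}_B$ as $B'=B_1'\cup B_2'$ with $B_i'=B'\cap B_i$; the condition $B'\subsetneq B$ forces $B_1'\subsetneq B_1$ or $B_2'\subsetneq B_2$, and since $F\cup B'\notin\Delta_1\circ\Delta_2$ requires both $F_1\cup B_1'\notin\Delta_1$ and $F_2\cup B_2'\notin\Delta_2$, the corresponding $F_i$ lies in $\widetilde{N}_{B_i}(\Delta_i)$.

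For the remaining two pieces I will check that every element already lies in $\widetilde{N}_B(\Delta_1\circ\Delta_2)$. Indeed, if $F_1\in N_{B_1}(\Delta_1)$ and $F_2\in M_{B_2}(\Delta_2)$, then choosing $B'=B_1\subsetneq B$ (recall $B_2\neq\emptyset$) gives $(F\cup B')\cap V_1=F_1\cup B_1\notin\Delta_1$ and $(F\cup B')\cap V_2=F_2\notin\Delta_2$ (using that $\Delta_2$ is closed under taking subsets, any superset of the non-face $F_2$ remains a non-face). The symmetric argument with $B'=B_2$ handles the third piece. Combining these three observations with Lemma~\ref{N}(a) yields the identity in (a).

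For part (b), assume without loss that $B=B_1\subseteq V_1$. By Lemma~\ref{N}(b), every $F\in N_{B_1}(\Delta_1\circ\Delta_2)$ has the form $F=F_1\cup F_2$ with $F_1\in N_{B_1}(\Delta_1)$ and $F_2\in M_\emptyset(\Delta_2)$, i.e.\ $F_2\notin\Delta_2$. Such an $F$ lies in $\widetilde{N}_{B_1}(\Delta_1\circ\Delta_2)$ iff there is $B'\subsetneq B_1$ with $F\cup B'\notin\Delta_1\circ\Delta_2$; since $F_2\notin\Delta_2$ implies $(F\cup B')\cap V_2=F_2\notin\Delta_2$ automatically, the surviving requirement is $F_1\cup B'\notin\Delta_1$, i.e.\ $F_1\in\widetilde{N}_{B_1}(\Delta_1)$. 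This gives (b), and the symmetric statement for $B=B_2$ is analogous. The only subtlety to keep in mind throughout is the subset-closure of simplicial complexes (a non-face stays a non-face after enlargement), which is used repeatedly to control the $V_2$-part of $F\cup B'$; this is the step that makes the second and third pieces of $N_B(\Delta_1\circ\Delta_2)$ automatically land in $\widetilde{N}_B(\Delta_1\circ\Delta_2)$, and it is the minor bookkeeping hurdle rather than a serious obstacle.
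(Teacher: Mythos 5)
Your proof is correct. The paper states Lemma~\ref{tildeN} (together with Lemma~\ref{N}) without proof, introducing both as ``obvious facts,'' so there is no argument in the paper to compare against; your case analysis over the three disjoint pieces of $N_B(\Delta_1\circ\Delta_2)$ from Lemma~\ref{N}(a), with the two ``mixed'' pieces landing automatically in $\widetilde{N}_B(\Delta_1\circ\Delta_2)$ via the witness $B'=B_1$ (resp.\ $B'=B_2$), is exactly the verification the authors left to the reader.
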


We use these lemmata to prove

\begin{Proposition}
\label{carneval}
Let $\Delta_1$ and $\Delta_2$ be two simplicial complexes on the vertex sets $V_1$ and $V_2$, respectively, and let $\bb_i\in \{0,1\}^{V_i}$,  $B_i=\supp \bb_i$ for $i=1,2$. Assume that $I_{\Delta_i}\neq 0$ for $i=1,2$.
\begin{enumerate}
\item[(a)] Suppose that $\bb_1, \bb_2\neq 0$. Then
\begin{eqnarray*}
T^1(\Delta_1\circ\Delta_2)_{-(\bb_1+ \bb_2)}\iso
\left\{
\begin{array}{ll}
K, &\mbox{if $\widetilde{N}_{B_1\cup B_2}(\Delta_1\circ\Delta_2)=\emptyset$ and $N_{B_1\cup B_2}(\Delta_1\circ\Delta_2)\neq\emptyset$, }\\
0, &\mbox{otherwise}.
\end{array}
\right.
\end{eqnarray*}

\item[(b)] $\dim_K T^1(\Delta_1\circ\Delta_2)_{-\bb_i}=\dim_K T^1(\Delta_i)_{-\bb_i }$ for $i=1,2$.
%Suppose that $b_1\neq 0$ and $b_2=0$. Then  $T^1(\Delta_1\circ\Delta_2)_{-(b_1+ b_2)}\iso T^1(\Delta_1)_{-b_1 }$. A similar statement holds if $b_1=0$ and $b_2\neq 0$.
\end{enumerate}
\end{Proposition}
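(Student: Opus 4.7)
My plan is to apply Corollary~\ref{useful} to both sides so that all quantities become the combinatorial spaces $\Lambda_B$, and then to use Lemmas~\ref{N} and~\ref{tildeN} to express $N_{B}$ and $\widetilde{N}_{B}$ of $\Delta_1\circ\Delta_2$ in terms of the pieces $N_{B_i}(\Delta_i)$, $\widetilde{N}_{B_i}(\Delta_i)$, $M_{B_i}(\Delta_i)$. A preliminary observation that I will invoke repeatedly is that every $F\in N_B(\Delta_1\circ\Delta_2)\setminus\widetilde{N}_B(\Delta_1\circ\Delta_2)$ decomposes uniquely as $F=F_1\cup F_2$ with $F_i\in N_{B_i}(\Delta_i)\setminus\widetilde{N}_{B_i}(\Delta_i)$; this is read off from Lemmas~\ref{N}(a) and~\ref{tildeN}(a).

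For part (a), since $B:=B_1\cup B_2$ satisfies $|B|\geq 2$, Corollary~\ref{useful}(a) gives $T^1(\Delta_1\circ\Delta_2)_{-(\bb_1+\bb_2)}=\Lambda_B(\Delta_1\circ\Delta_2)$. The case $N_B=\emptyset$ is trivial. If $\widetilde{N}_B=\emptyset$ and $N_B\neq\emptyset$, Lemma~\ref{tildeN}(a) forces all four of its summands to be empty; combined with $N_B\neq\emptyset$ this yields $N_{B_i}(\Delta_i)\neq\emptyset$ and $\widetilde{N}_{B_i}(\Delta_i)=M_{B_i}(\Delta_i)=\emptyset$ for $i=1,2$, so that $N_B=N_{B_1}(\Delta_1)*N_{B_2}(\Delta_2)$. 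Any two elements $F_1\cup F_2$ and $F_1'\cup F_2'$ of $N_B$ are both contained in the common ancestor $F_1\cup F_1'\cup F_2\cup F_2'$, which again lies in $N_B$: the condition $M_{B_i}=\emptyset$ guarantees $F_i\cup F_i'\in\Delta_i$, and containment of $F_i\cup B_i$ forbids $(F_i\cup F_i')\cup B_i$ from being a face. Hence $N_B$ is connected in $G_B(\Delta_1\circ\Delta_2)$ and $\Lambda_B(\Delta_1\circ\Delta_2)\cong K$. If instead $\widetilde{N}_B\neq\emptyset$, one of its four summands is non-empty; I pick a witness $G_j$ in the corresponding $\widetilde{N}_{B_j}(\Delta_j)$ or $M_{B_j}(\Delta_j)$ and show that for any $F=F_1'\cup F_2'\in N_B\setminus\widetilde{N}_B$, the augmented set $F\cup G_j$ lies in $\widetilde{N}_B$ and strictly contains $F$. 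Strictness is the delicate point: if $G_j\in\widetilde{N}_{B_j}$, then $G_j\subseteq F_j'$ would force $F_j'\in\widetilde{N}_{B_j}$, contradicting the decomposition, while if $G_j\in M_{B_j}$, then $G_j\subseteq F_j'\in\Delta_j$ would place a non-face inside $\Delta_j$. The resulting edge in $G_B$ propagates $\lambda(F\cup G_j)=0$ to $\lambda(F)=0$, so $\Lambda_B(\Delta_1\circ\Delta_2)=0$.

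For part (b), Lemmas~\ref{N}(b) and~\ref{tildeN}(b) give
\[
N_{B_1}(\Delta_1\circ\Delta_2)=N_{B_1}(\Delta_1)*M_\emptyset(\Delta_2),\qquad \widetilde{N}_{B_1}(\Delta_1\circ\Delta_2)=\widetilde{N}_{B_1}(\Delta_1)*M_\emptyset(\Delta_2),
\]
with $M_\emptyset(\Delta_2)\neq\emptyset$ because $I_{\Delta_2}\neq 0$. I define $\Phi\colon\Lambda_{B_1}(\Delta_1)\to\Lambda_{B_1}(\Delta_1\circ\Delta_2)$ by $\Phi(\lambda)(F_1\cup G)=\lambda(F_1)$. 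Since every edge $F_1\cup G\subsetneq F_1'\cup G'$ in $G_{B_1}(\Delta_1\circ\Delta_2)$ satisfies $F_1\subseteq F_1'$ in $G_{B_1}(\Delta_1)$, both the edge and the vanishing conditions transfer directly, so $\Phi$ is well-defined and injective. For surjectivity I fix $G_0\in M_\emptyset(\Delta_2)$ and set $\lambda(F_1):=\mu(F_1\cup G_0)$; the fact that a union containing the non-face $G_0$ is again a non-face shows $F_1\cup G\cup G_0\in N_{B_1}(\Delta_1\circ\Delta_2)$ for every $G\in M_\emptyset(\Delta_2)$, producing edges that force $\mu(F_1\cup G)=\mu(F_1\cup G_0)=\lambda(F_1)$, whence $\Phi(\lambda)=\mu$. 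Thus $\dim_K\Lambda_{B_1}(\Delta_1)=\dim_K\Lambda_{B_1}(\Delta_1\circ\Delta_2)$, and Corollary~\ref{useful} yields the desired equality of $T^1$-dimensions (the $-1$ correction in the $|B_1|=1$ case appearing on both sides). The case $i=2$ is symmetric.

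The main technical obstacle is the case $\widetilde{N}_B\neq\emptyset$ of part~(a): each of the four subcases requires an ad hoc choice of witness $G_j$, and one must verify in each that $F\cup G_j$ really lies in $\widetilde{N}_B$ and is strictly larger than $F$—the latter being where the hypothesis $F\notin\widetilde{N}_B$ is used in an essential way.
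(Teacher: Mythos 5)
Your proposal is correct and follows essentially the same route as the paper's proof: reduce everything to the spaces $\Lambda_B$ via Corollary~\ref{useful}, use Lemmas~\ref{N} and~\ref{tildeN} to deduce $N_{B_i}(\Delta_i)\neq\emptyset$ and $\widetilde{N}_{B_i}(\Delta_i)=M_{B_i}(\Delta_i)=\emptyset$ in the nonvanishing case of (a), kill $\lambda$ by adjoining a witness from $M_{B_j}$ or $\widetilde{N}_{B_j}$ in the vanishing case, and construct the same restriction/extension isomorphism $\lambda\mapsto\bigl(F_1\cup G\mapsto\lambda(F_1)\bigr)$ for (b). The only cosmetic differences are that you use pairwise common upper bounds where the paper uses the single maximal element $(V_1\setminus B_1)\cup(V_2\setminus B_2)$, and that you make the surjectivity step in (b) explicit via the intermediate element $F_1\cup G\cup G_0$.
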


\begin{proof} First we observe that for each $i=1,2$,  $I_{\Delta_i}\neq 0$ if and only if $V_i\notin\Delta_i$, which is equivalent to  $M_{\phi}(\Delta_i)\neq \emptyset$.

 (a) Assume  that $\widetilde{N}_{B_1\cup B_2}(\Delta_1\circ\Delta_2)=\emptyset$ and $N_{B_1\cup B_2}(\Delta_1\circ\Delta_2)\neq\emptyset$. Lemma~\ref{N}(a)  together with Lemma~\ref{tildeN}(a) imply that $N_{B_1}(\Delta_1)*N_{B_2}(\Delta_2)\neq \emptyset$. Therefore $N_{B_i}(\Delta_i)\neq \emptyset$ for $i=1,2$.
Since $\widetilde{N}_{B_1\cup B_2}(\Delta_1\circ\Delta_2)=\emptyset$  it follows from Lemma~\ref{tildeN}(a) that $N_{B_1}(\Delta_1)*\widetilde{N}_{B_2}(\Delta_2)=\emptyset$ and  $\widetilde{N}_{B_1}(\Delta_1)*N_{B_2}(\Delta_2)=\emptyset$, and so $\widetilde{N}_{B_i}(\Delta_i)=\emptyset$ for $i=1,2$. The same argument shows that  $M_{B_i}(\Delta_i)=\emptyset$ for $i=1,2$.

 Since $M_{B_i}(\Delta_i)=\emptyset $ for $i=1,2$, we have $V_i\setminus B_i\in \Delta_i$ and  hence $V_i\setminus B_i\in N_{B_i}(\Delta_i)$ for $i=1,2$.  It follows that  any $\lambda\in T^1(\Delta_1\circ\Delta_2)_{-(\bb_1+ \bb_2)}$ is constant on $N_{B_1\cup B_2}(\Delta_1\circ\Delta_2)$, because  for any $F_1\cup F_2\in N_{B_1\cup B_2}(\Delta_1\circ \Delta_2) $ with $F_i\subseteq V_i$ for $i=1,2$ we have  $F_1\cup F_2\subseteq (V_1\setminus B_1) \cup (V_2\setminus B_2).$
  Hence $T^1(\Delta_1\circ\Delta_2)_{-(\bb_1+ \bb_2)}\cong K$.

 If $N_{B_1\cup B_2}(\Delta_1\circ\Delta_2)=\emptyset,$ then  $T^1(\Delta_1\circ\Delta_2)_{-(\bb_1+ \bb_2)}=0$. Assume now that  $\widetilde{N}_{B_1\cup B_2}(\Delta_1\circ\Delta_2)\neq \emptyset$.  Let $\lambda\in \Lambda^1(\Delta_1\circ\Delta_2)_{-(\bb_1+ \bb_2)}$ (see its definition in  Corollary~\ref{useful})  and $F_1\cup F_2\in N_{B_1\cup B_2}(\Delta_1\circ \Delta_2)\setminus  \widetilde{N}_{B_1\cup B_2}(\Delta_1\circ\Delta_2)$ with $F_i\subseteq V_i,i=1,2$. It follows from Lemmata~\ref{N}(a) and \ref{tildeN}(a) that $F_i\in N_{B_i}(\Delta_i)\setminus \widetilde{N}_{B_i}(\Delta_i)$ for $i=1,2$. By Lemma~\ref{tildeN}(a), at least one of $\widetilde{N}_{B_i}(\Delta_i), i=1,2$ and
  $M_{B_i}(\Delta_i), i=1,2$ is nonempty.

 First suppose that $M_{B_1}(\Delta_1)\neq \emptyset$. Then we take  $G_1\in M_{B_1}(\Delta_1)$. Note that  $G_1\cup F_2\in \widetilde{N}_{B_1\cup B_2}(\Delta_1\circ\Delta_2)$ and $(G_1\cup F_2)\cup (F_1\cup F_2)\in \Delta_1\circ \Delta_2$. Therefore $\lambda(F_1\cup F_2)=0$. Similarly, we can conclude that  $\lambda(F_1\cup F_2)=0$ if $M_{B_2}(\Delta_2)\neq \emptyset$ or $\widetilde{N}_{B_i}(\Delta_i)\neq \emptyset$ for $i=1$ or $i=2$. So we have proved that  $\lambda=0$ in any case.  Thus $T^1(\Delta_1\circ\Delta_2)_{-(\bb_1+ \bb_2)}=0$.

(b) We prove the statement for $i=1$. The same argument holds for $i=2$.
Since $M_{\emptyset}(\Delta_2)\neq \emptyset$, we have $N_{B_1\cup B_2}(\Delta_1\circ\Delta_2)=\emptyset$ if and only if $N_{B_1}(\Delta_1)=\emptyset$ by Lemma~\ref{N}(b).
Hence we only need to consider the case when $N_{B_1\cup B_2}(\Delta_1\circ\Delta_2)\neq \emptyset$.

 We define a homomorphism $\varphi$ from $\Lambda^1(\Delta_1)_{-\bb_1}$ to   $\Lambda^1(\Delta_1\circ\Delta_2)_{-\bb_1}$  as follows: for $\lambda_1\in \Lambda^1(\Delta_1)_{-\bb_1}$, $\varphi(\lambda_1)$ is given by $\varphi(\lambda_1)(F_1\cup G_1)=\lambda_1(F_1)$ for any $F_1\in N_{B_1}(\Delta_1)$ and $G_1\in M_{\emptyset}(\Delta_2).$  One can check that $\varphi(\lambda_1)$ belongs to $\Lambda^1(\Delta_1\circ\Delta_2)_{-\bb_1}$ and that $\varphi$ is injective. To see that  $\varphi$ is surjective, one only need to notice that for any $\lambda\in \Lambda^1(\Delta_1\circ\Delta_2)_{-\bb_1}$, $\lambda(F_1\cup G_1)=\lambda(F_1\cup G_2)$ for any $F_1\in N_{B_1}(\Delta_1)$ and $G_1,G_2\in M_{\emptyset}(\Delta_2).$ Hence $\varphi$ is an isomorphism of $K$-vector spaces. This implies that  $\dim_K T^1(\Delta_1\circ\Delta_2)_{-\bb_1}= \dim_K T^1(\Delta_1)_{-\bb_1}$, using Corollary~\ref{useful}.
\end{proof}

For simplicity we say that a simplicial complex $\Delta$ on the vertex set $[n]$ is {\em special} if either $\Delta$ has a unique facet, which is a maximal proper subset of $[n]$,  or $\Delta$ has exactly two facets, one of which contains a unique element say $i$ and the other one of which is $[n]\setminus \{i\}$. We see that $\Delta$ is special if and only if  $I_{\Delta}$ is of the form $zP$,  where $z$ is a variable in $S$ with $z\not \in P$, and $P$ is either a monomial prime ideal of $S$ or $P=S$.
\medskip

\begin{Theorem}
\label{circ}
{\em (a)} Suppose that $\Delta_1\circ\Delta_2$ is rigid, and that $I_{\Delta_i}\neq 0$ for $i=1,2$. Then $\Delta_1$ and $\Delta_2$ are rigid.

{\em (b)} Conversely, suppose that $\Delta_1$ and $\Delta_2$ are  rigid simplicial complexes with disjoint  vertex sets $V_1$ and $V_2$, respectively. Then $\Delta_1\circ \Delta_2$ is rigid if and only if for either $j=1$ or $j=2$, $\link_{\Delta_j}F$ is not special for all  $F\in \Delta_j$. In algebraic terms,  for $j=1$ or $j=2$, none of the monomial localizations of $I_{\Delta_j}$  is of the form $zP$, where $z$ is a variable in $S_j$ with $z\not \in P$, and $P\subseteq S_j$ is either a monomial prime ideal or $P=S_j$.
\end{Theorem}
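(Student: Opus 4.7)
The plan is to treat (a) and (b) separately; the bulk of the work is in (b).

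\emph{For (a),} I reduce rigidity of $\Delta_i$ to $\emptyset$-rigidity of all its links via Proposition~\ref{Proposition11}(b). Directly from the definition of the circ one has
\[
\link_{\Delta_1\circ\Delta_2}A=(\link_{\Delta_1}A)\circ\Delta_2 \quad \text{for every } A\in\Delta_1.
\]
Since links of the rigid complex $\Delta_1\circ\Delta_2$ are rigid, the right-hand side is rigid for each $A\in\Delta_1$. If $\link_{\Delta_1}A$ is a simplex, rigidity is automatic; otherwise $I_{\link_{\Delta_1}A}\neq 0$ and also $I_{\Delta_2}\neq 0$, so Proposition~\ref{carneval}(b) yields
\[
\dim_K T^1(\link_{\Delta_1}A)_{-\bb}=\dim_K T^1((\link_{\Delta_1}A)\circ\Delta_2)_{-\bb}=0
\]
for all $\bb\in\{0,1\}^{V(\link_{\Delta_1}A)}$. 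Hence every link of $\Delta_1$ is $\emptyset$-rigid, i.e., $\Delta_1$ is rigid; the argument for $\Delta_2$ is symmetric.

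\emph{For (b),} the same link identity applied on both sides gives, for $A_i\in\Delta_i$,
\[
\link_{\Delta_1\circ\Delta_2}(A_1\cup A_2)=(\link_{\Delta_1}A_1)\circ(\link_{\Delta_2}A_2),
\]
while for faces of mixed type the link is a join of a link of one $\Delta_j$ with a simplex, rigid by Proposition~\ref{join} and the rigidity of $\Delta_j$. So rigidity of $\Delta_1\circ\Delta_2$ reduces to $\emptyset$-rigidity of $\Gamma_1\circ\Gamma_2$ for all $\Gamma_i:=\link_{\Delta_i}A_i$, $A_i\in\Delta_i$. Proposition~\ref{carneval}(b) together with rigidity of each $\Delta_i$ kills the partial-degree components, and Proposition~\ref{carneval}(a) reduces the cross components $T^1(\Gamma_1\circ\Gamma_2)_{-(\bb_1+\bb_2)}$ with $\bb_i\neq 0$ to the question of whether, for at least one $i$, no non-empty $B_i\subseteq V(\Gamma_i)$ satisfies
\[
(\star)\quad M_{B_i}(\Gamma_i)=\widetilde{N}_{B_i}(\Gamma_i)=\emptyset,\quad N_{B_i}(\Gamma_i)\neq\emptyset.
\]

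The technical heart and main obstacle is the following claim: if a rigid simplicial complex $\Gamma$ with $I_\Gamma\neq 0$ admits a non-empty $B$ satisfying $(\star)$, then $\Gamma$ has a special link. To prove this, rigidity of $\Gamma$ combined with the formula $(\ref{number1})$ forces $|B|=1$: if $|B|\geq 2$, the combination $\widetilde{N}_B=\emptyset$, $N_B\neq\emptyset$ would give $T^1(\Gamma)_{-\bb}\neq 0$. So $B=\{v\}$ for some $v$, and $M_{\{v\}}=\emptyset$ means every minimal non-face of $\Gamma$ contains $v$, i.e., $I_\Gamma=x_vJ$ for some non-zero squarefree monomial ideal $J$ not involving $x_v$. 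Choose any $l$ in the support of $J$ and monomially localize at $P=(x_v,x_l)$. The complement $V(\Gamma)\setminus\{v,l\}$ is a face of $\Gamma$ (it misses $v$ and hence contains no minimal non-face), so by Lemma~\ref{local}(c) the ideal $I_\Gamma(P)$ is the Stanley--Reisner ideal of $\link_\Gamma(V(\Gamma)\setminus\{v,l\})$. Substituting $x_j\mapsto 1$ for $j\notin\{v,l\}$ in each generator $x_v x_{S_k}$ of $I_\Gamma$ yields $x_v x_l$ when $l\in S_k$ and $x_v$ when $l\notin S_k$. Thus $I_\Gamma(P)$ equals $(x_v x_l)$ if every $S_k$ contains $l$ and $(x_v)$ otherwise; in either case it is of the form $zP'$ with $z=x_v$ a variable not in $P'$ and $P'$ a monomial prime or $P'=S$. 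The resulting link of $\Gamma$ is therefore special.

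Granting the claim, (b) follows. If both $\Delta_j$ admit special links $\link_{\Delta_j}F_j$, then $\Gamma_j:=\link_{\Delta_j}F_j$ are themselves special, and a direct check in each case of the definition — for $I=(x_v)$ take $B=\{v\}$, for $I=x_v(x_l:l\neq v)$ again take $B=\{v\}$ — shows that each $\Gamma_j$ admits some $B_j$ satisfying $(\star)$. Proposition~\ref{carneval}(a) then forces $T^1(\Gamma_1\circ\Gamma_2)_{-(\bb_1+\bb_2)}\neq 0$, so $\Delta_1\circ\Delta_2$ is not rigid. Conversely, if $\Delta_1\circ\Delta_2$ is not rigid, then in some cross degree some $\Gamma_1\circ\Gamma_2$ fails $\emptyset$-rigidity, giving $B_j$ satisfying $(\star)$ for both $j$; by the claim each $\Gamma_j$ has a special link, which by transitivity of links is itself a link of $\Delta_j$.
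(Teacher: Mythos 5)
Your proposal is correct in substance and shares the paper's overall framework (reduction to links via Proposition~\ref{Proposition11}, the bookkeeping of $N$, $\widetilde N$ and $M$ through Lemmata~\ref{N} and \ref{tildeN}, Proposition~\ref{carneval}, and the deduction that rigidity forces $|B_i|=1$ and $M_{B_i}=\emptyset$), but it diverges in two places in a genuinely different and arguably cleaner way. For part (a) the paper simply monomially localizes at $P_i=(x_i\:\, i\in V_i)$, notes $I_{\Delta_1\circ\Delta_2}(P_i)=I_{\Delta_i}$, and applies Lemma~\ref{local}(b); your route through the link identity and Proposition~\ref{carneval}(b) works but is longer. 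The real difference is in producing the special link: where the paper runs a case analysis on $|V(\Gamma_1)|\in\{1,2,\geq 3\}$ and on the facet structure $\Gamma_1=\langle V(\Gamma_1)\setminus\{x_1\},F_1,\dots,F_k\rangle$, you localize $I_\Gamma=x_vJ$ at the rank-two prime $(x_v,x_l)$ and observe that any two-variable ideal of the form $x_v\cdot(\text{something})$ is automatically $zP'$; this buys a shorter, more uniform argument. Likewise, for the direction "both $\Delta_j$ have special links $\Rightarrow$ not rigid" you verify $(\star)$ directly and invoke Proposition~\ref{carneval}(a), whereas the paper computes $I_{\link_{\Delta_1\circ\Delta_2}(F_1\cup F_2)}=(z_1z_2)P_1P_2S$ and reduces to the non-rigidity of $(z_1z_2)$. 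One small patch is needed: in your key claim, after writing $I_\Gamma=x_vJ$, the instruction "choose any $l$ in the support of $J$" breaks down when $J$ is the unit ideal, i.e.\ when $I_\Gamma=(x_v)$; but in that case $\Gamma=\link_\Gamma\emptyset$ is itself special, so the conclusion holds trivially — add one sentence to cover it. (Similarly, in the reduction you should note that if some $I_{\Gamma_i}=0$ then $\Gamma_1\circ\Gamma_2$ is the full simplex and there is nothing to check, since Proposition~\ref{carneval} assumes $I_{\Delta_i}\neq 0$.)
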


\begin{proof}  Let $S_1=K[x_1,\ldots,x_n]$, $S_2=K[y_1,\ldots,y_m]$ and $S=K[x_1,\ldots,x_n,y_1,\ldots,y_n]$.
We may assume that $I_{\Delta_1}\subseteq S_1$ and $I_{\Delta_2}\subseteq S_2$.

(a) Let $P_1=(x_1,\ldots,x_n)$ and $P_2=(y_1,\ldots,y_m)$.  Since $$I_{\Delta_1\circ \Delta_2}(P_i)=(I_{\Delta_1} I_{\Delta_2}S)(P_i)=I_{\Delta_i}$$ for $i=1,2$, we see  that
\[
T^1(K[\Delta_1\circ \Delta_2](P_i))\iso T^1(K[\Delta_i]).
\]
Thus the assertion follows from Lemma~\ref{local}(b).

(b) First suppose that $\Delta_1\circ\Delta_2$ is rigid, and suppose that there exist $F_1\in\Delta_1$ and $F_2\in \Delta_2$ such that the  Stanley--Reisner ideal $I_{\link_{\Delta_j}F_j}$ is  of the form $z_jP_j$ where  $z_j$ is a variable in $S_j$ with $z_j\not \in P_j$,
$P_j\subseteq S_j$ is either a monomial prime ideal or $P_j=S_j$. Since $\link_{\Delta_1\circ \Delta_2}F_1\cup F_2=\link_{\Delta_1}F_1\circ \link_{\Delta_2}F_2$, it follows that
$$(I_{\link_{\Delta_1\circ \Delta_2}F_1\cup F_2})S=I_{\link_{\Delta_1}F_1}I_{\link_{\Delta_2}F_2}S=(z_1z_2)P_1P_2S$$
is not rigid because the ideal $(z_1z_2)$ is not rigid, a contradiction (see Proposition~\ref{Proposition11}(b) or Lemma~\ref{local}(c)).

 Suppose that $\Delta_1\circ \Delta_2$ is not rigid. Then there exist $\ab,\bb\in \{0,1\}^{V_1\cup V_2}$ with $\supp \ab\sect \supp \bb= \emptyset$ such that  $T^1(\Delta_1\circ \Delta_2)_{\ab-\bb}\neq 0$. Let $\ab=\ab_1+ \ab_2$ and $\bb=\bb_1+ \bb_2$ with $\ab_1, \bb_1\in \{0,1\}^{V_1}$ and $\ab_2, \bb_2\in\{0,1\}^{V_2}$.  We set $B=\supp \bb$, $B_i=\supp \bb_i$ for $i=1,2$, and denote by $\Gamma_i$ the simplicial complex $\link_{\Delta_i}(\supp \ab_i)$ for $i=1,2$. Since $\link_{\Delta_1\circ\Delta_2}(\supp \ab)=\Gamma_1\circ \Gamma_2$,  we have
$T^1(\Gamma_1\circ \Gamma_2)_{-\bb} \neq 0$ by using Proposition~\ref{Proposition11}(b). Note that $\Gamma_i$ is rigid for $i=1,2$ by Lemma~\ref{local}(c).

Since $T^1(\Gamma_1\circ \Gamma_2)_{-\bb} \neq 0$ we have $(I_{\Gamma_1}I_{\Gamma_2})=I_{\Gamma_1\circ\Gamma_2}\neq 0$. This implies that $I_{\Gamma_i}\neq 0$ for $i=1,2$.
Suppose $\bb_1=0$. Since $T^1(\Gamma_1\circ \Gamma_2)_{-\bb_2}=T^1(\Gamma_2)_{-\bb_2}$ (see Proposition~\ref{carneval}(b)), we have $T^1(\Gamma_1\circ \Gamma_2)_{-\bb_2} = 0$, because $\Gamma_2$ is rigid. This is a contradiction. Therefore $\bb_1\neq 0$, and similarly $\bb_2\neq 0$.
Now Proposition~\ref{carneval}(a) implies that ${N}_{B_1\cup B_2}(\Gamma_1\circ \Gamma_2)\neq\emptyset$ and $\widetilde{N}_{B_1\cup B_2}(\Gamma_1\circ \Gamma_2)=\emptyset$.  By a similar argument used in Proposition~\ref{carneval}, we have $N_{B_i}(\Gamma_i)\neq \emptyset$  and $M_{B_i}(\Gamma_i)=\widetilde{N}_{B_i}(\Gamma_i)= \emptyset$ for $i=1,2$. Therefore, for $i=1,2$  the map $\lambda_i : N_{B_i}(\Gamma_i)\to K$, which maps each element of $N_{B_i}(\Gamma_i)$ to $1$, defines a nonzero element in $\Lambda(\Gamma_i)_{-\bb_i}$. Since  $\Gamma_i$ is rigid it follows from Corollary~\ref{useful}(a) that  $|B_i|=1$  for $i=1,2$. Without loss of generality we may assume that  $B_{1}=\{x_1\}$ and $B_2=\{y_1\}$. Then $V(\Gamma_1)\setminus \{x_1\}\in \Gamma_1$ and  $V(\Gamma_2)\setminus \{y_1\}\in \Gamma_2$ since $M_{B_i}(\Gamma_i)= \emptyset$ for $i=1,2$. Now we consider the following cases:

Case 1: $|V(\Gamma_1)|=1$. Since $I_{\Gamma_1}$ is not zero  we have $I_{\Gamma_1}=(x_1)$ and so  $\Gamma_1$ is special.

Case 2: $|V(\Gamma_1)|=2$. Assume $V(\Gamma_1)=\{x_1,x_2\}$. Then $\{x_2\}\in \Gamma_1$. Since $I_{\Gamma_1}\neq 0$, we have $\{x_1,x_2\}\notin \Gamma_1$ and since $I_{\Gamma_1}$ is rigid we have $\Gamma_1\neq \langle\{x_1\},\{x_2\}\rangle$. Hence $\Gamma_1=\langle\{x_2\}\rangle$ and $I_{\Gamma_1}=(x_1)$.  This implies that $\Gamma_1$ is special again.

Case 3:   $|V(\Gamma_1)|\geq 3$. Since $V(\Gamma_1)\setminus \{x_1\}\in \Gamma_1$, we have either $\Gamma_1=\langle V(\Gamma_1)\setminus \{x_1\} \rangle$ or $\Gamma_1=\langle V(\Gamma_1)\setminus \{x_1\}, F_1,\ldots, F_k \rangle$, where $k\geq 1$ and $x_1\in F_j$ for $j=1,\ldots, k$. In the first case we have $I_{\Gamma_1}=(x_1)$ and we are  done. In the second  one, we set $G=F_1\setminus \{x_1\}$. Then $\link_{\Gamma_1}G=\langle V(\Gamma_1)\setminus F_1,\{x_1\}\rangle$ and $V(\link_{\Gamma_1}G)=(V(\Gamma_1)\setminus F_1)\cup \{x_1\}$. Write $V(\Gamma_1)\setminus F_1=\{x_2,\ldots,x_s\}$. Since $\link_{\Gamma_1}G$ is rigid, we have $s\geq 3$ and so $I_{\link_{\Gamma_1}G}=(x_1)(x_2,\ldots, x_s)=x_1P$. This also implies that $\Gamma_1$ is special.

The similar argument is applied to $\Gamma_2$.
\end{proof}

\begin{Corollary}
\label{product}
{\em(a)} Let $I$ be a squarefree monomial ideal and let $P$ be a non-principal monomial prime ideal in a disjoint set of variables. Then $IP$ is rigid if and only if $I$ is rigid.

{\em(b)} Let $P_1,\ldots, P_t$ be monomial prime ideals generated by pairwise disjoint sets of variables.  Then $\prod_{i=1}^tP_i$ is rigid if and only if at most one $P_i$ is a principal ideal.
\end{Corollary}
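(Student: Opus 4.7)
My strategy is to identify the product $IP$ (respectively $\prod P_i$) with the Stanley--Reisner ideal of a circ of simplicial complexes and then apply Theorem~\ref{circ} together with the localization Lemma~\ref{local}.

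\emph{Part (a).} Let $\Delta_1,\Delta_2$ be the simplicial complexes with $I_{\Delta_1}=I$ and $I_{\Delta_2}=P$, so that $IP=I_{\Delta_1\circ\Delta_2}$. We may assume $I\neq 0$ (otherwise the assertion is trivial), so $I_{\Delta_1}\neq 0\neq I_{\Delta_2}$. Since $P$ is non-principal, $\Delta_2=\{\emptyset\}$ lives on a vertex set of cardinality $r\geq 2$, and a direct computation from Corollary~\ref{useful} shows $\Delta_2$ is rigid. The forward direction then follows at once from Theorem~\ref{circ}(a). For the converse I apply Theorem~\ref{circ}(b) with $j=2$: the only face of $\Delta_2$ is $\emptyset$, whose link is $\Delta_2$ itself, and $\Delta_2$ is not special because its unique facet $\emptyset$ is not a maximal proper subset of a set of cardinality $r\geq 2$.

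\emph{Part (b).} I argue by induction on $t$. The base case $t=1$ is immediate, since $S/P_1$ is a polynomial ring. For the forward implication, suppose two primes, say $P_1=(x)$ and $P_2=(y)$, are both principal. Monomial localization at the prime ideal $(x,y)$ (i.e., setting all variables outside $\{x,y\}$ equal to $1$) turns $\prod_{i=1}^t P_i$ into $(xy)\subseteq K[x,y]$. The Stanley--Reisner complex of $(xy)$ is the disjoint union of two isolated vertices, so Theorem~\ref{union}(b) implies it is not rigid (as $\dim\Delta_1+\dim\Delta_2=0$). Lemma~\ref{local}(b) then forces $\prod_i P_i$ to be non-rigid.

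Conversely, assume at most one $P_i$ is principal. If none is principal, then $\prod_{i<t}P_i$ is rigid by the induction hypothesis, and part (a) applied with $I=\prod_{i<t}P_i$ and $P=P_t$ yields rigidity of $\prod_{i=1}^tP_i$. If exactly one, say $P_t=(z)$, is principal, I set $\Delta_1,\Delta_2$ to be the complexes with $I_{\Delta_1}=\prod_{i<t}P_i$ (rigid by induction) and $I_{\Delta_2}=(z)$ (trivially rigid). To apply Theorem~\ref{circ}(b) with $j=1$, I must check that no link of $\Delta_1$ is special. By Lemma~\ref{local}(c) together with the fact that monomial localization of a product of prime ideals in pairwise disjoint sets of variables is computed factor-by-factor, $I_{\link_{\Delta_1}F}=\prod_{j\in J_F}P_j$ where $J_F=\{j<t:V_j\cap F=\emptyset\}$ is non-empty. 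Such a product can only be of the form $zQ$, with $z$ a variable and $Q$ either $S$ or a monomial prime, if $|J_F|\leq 2$ and some $P_j$ with $j\in J_F$ is principal; since every $P_j$ with $j<t$ is non-principal by assumption, this never happens. Hence Theorem~\ref{circ}(b) gives the desired rigidity.

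\emph{Main obstacle.} The least routine step is the link analysis for $\Delta_1$ in the mixed case of part (b): one must verify that a product $\prod_{j\in J}P_j$ of non-principal monomial primes in pairwise disjoint variables never takes the special form $zQ$. This amounts to a short degree-counting comparison between the minimal generators of the product and those of $zQ$, which is where the hypothesis that each $P_j$ is non-principal is used in an essential way.
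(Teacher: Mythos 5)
Your proof of part (a) is correct and is essentially the paper's own argument: the forward direction is Theorem~\ref{circ}(a), and the converse is Theorem~\ref{circ}(b) applied with $j=2$, the only difference being that you verify non-specialness of the single link of $\Delta_2=\{\emptyset\}$ combinatorially (its facet $\emptyset$ is not a maximal proper subset of a vertex set of size $\geq 2$), while the paper phrases the same fact algebraically (no monomial localization of $P$ has the form $(z)$ or $zQ$). For part (b) the paper says only that it ``follows immediately from (a)'', and your induction is a correct way to supply the missing details, though it is somewhat more roundabout than the intended route. The intended shortcut is to iterate (a) itself: multiplying by a non-principal prime factor never changes rigidity, so $\prod_i P_i$ is rigid if and only if the product of its principal factors is, and that product is a principal ideal $(x_{i_1}\cdots x_{i_k})$, which is rigid exactly when $k\leq 1$ (for $k\geq 2$ it is a complete intersection generated in degree $\geq 2$, cf.\ Example 1.5(b)). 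By contrast, you prove necessity by localizing down to $(xy)$ and invoking Theorem~\ref{union}(b) plus Lemma~\ref{local}(b), and you handle the mixed case by rerunning the link analysis of Theorem~\ref{circ}(b) for $\Delta_1$ equal to the product of the non-principal factors; both steps are sound --- in particular your key observation that a product of non-principal primes in pairwise disjoint variables is never special is correct, since the gcd of its minimal generators is $1$ whereas every ideal of the form $zQ$ has the common factor $z$. The two routes rest on exactly the same ingredients (Theorem~\ref{circ} and Lemma~\ref{local}); yours simply does a little extra work where a second appeal to part (a) would have sufficed.
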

\begin{proof}
(a) Let $\Delta_1$ and $\Delta_2$ be two simplicial complexes with $I_{\Delta_1}=I$ and $I_{\Delta_2}=P$. Suppose $IP$ is rigid.  It follows from Theorem~\ref{circ}(a) that $I$ is rigid.
Conversely, suppose that $I$ is rigid. Note that the links of $\Delta_2$ correspond to monomial localizations of $P$ by Lemma~\ref{local}(c).
Since monomial localization of $P$ with respect to any monomial prime ideal is never of the form $(z)$ or $zQ$  with $Q$  a monomial prime ideal, Theorem~\ref{circ}(b) yields the desired conclusion.

(b) follows immediately from (a).
\end{proof}

\bigskip
\noindent
{\it Letterplace ideals.} We conclude this section with applications to letterplace ideals.
In \cite{FGH}, letterplace and co-letterplace ideals are introduced and it is shown that these are all inseparable monomial ideals. In this section we consider rigidity of this class of ideals.

More generally, let $\MP$ and $\MQ$ be two partially ordered sets. A map $\varphi: \MP \to \MQ$
is called {\it isotone} or {\it order preserving}, if $p\leq p^\prime$
implies $\varphi(p) \leq \varphi(p^\prime)$. The set of isotone maps is denoted by
$\Hom(\MP,\MQ)$. Note that $\Hom(\MP,\MQ)$ is  again a partially ordered set
with $\phi \leq \psi$ if $\phi(p) \leq \psi(p)$ for all $p \in \MP$.

We fix a field $K$ and consider the polynomial ring  $S$ over $K$ in the variables $x_{p,q}$ with $p\in \MP$ and $q\in \MQ$. Attached to $\MP$ and $\MQ$ we define the monomial ideal $L(\MP,\MQ)\subseteq S$ generated by the monomials
\[
u_\varphi=\prod_{p\in \MP}x_{p,\varphi(p)}, \quad \varphi\in \Hom(\MP,\MQ).
\]

\begin{Theorem}
\label{dan}
Let $\MP$ and $\MQ$ be  finite posets. The following conditions  are equivalent:
\begin{enumerate}
\item[(a)]  $L(\MP, \MQ)$ is  rigid.
\item[(b)] No two distinct elements of $\MP$ are comparable.
\end{enumerate}
\end{Theorem}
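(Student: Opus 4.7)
The plan is to prove the theorem by handling the two implications separately.

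The implication (b) $\Rightarrow$ (a) is the easier direction. When $\MP$ is an antichain, every set map $\MP \to \MQ$ is vacuously isotone, so $L(\MP,\MQ)$ is generated by all monomials $\prod_{p \in \MP} x_{p, q_p}$ with arbitrary $(q_p)_{p\in\MP}$. Equivalently, $L(\MP, \MQ) = \prod_{p \in \MP} P_p$, where $P_p := (x_{p,q} : q \in \MQ)$ is a monomial prime ideal and the $P_p$ lie in pairwise disjoint variable sets. Rigidity then follows at once from Corollary~\ref{product}(b), which is precisely where the circ-construction enters the argument.

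For the converse I would argue by contrapositive: assume $p < p'$ in $\MP$. The main step is a reduction via monomial localization. Let $P^\star$ be the monomial prime generated by all variables of the form $x_{p, q}$ or $x_{p', q}$ with $q \in \MQ$. By Lemma~\ref{local}, the monomial localization $L(\MP, \MQ)(P^\star)$ is obtained by setting the remaining variables to $1$, hence is generated by the monomials $x_{p, \varphi(p)}\, x_{p', \varphi(p')}$ as $\varphi$ ranges over isotone maps $\MP \to \MQ$. Conversely, for any pair $q \leq q'$ in $\MQ$ the assignment $\varphi(r) = q'$ if $r \geq p'$ and $\varphi(r) = q$ otherwise is easily checked to be isotone, with $\varphi(p) = q$ and $\varphi(p') = q'$. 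Hence
\[
L(\MP, \MQ)(P^\star) \;=\; L(\{p < p'\}, \MQ) \;=\; \bigl( x_{p,q}\, x_{p', q'} \;:\; q \leq q' \text{ in } \MQ\bigr).
\]
By Lemma~\ref{local}(b) it then suffices to show that the latter ideal fails to be rigid for every finite poset $\MQ$.

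This sub-claim is handled by cases. If $|\MQ| = 1$ the ideal is principal of the form $(xy)$, i.e.\ the Stanley--Reisner ideal of the boundary of an edge, and so non-rigid by Example~(b) in Section~\ref{rigidity}. If $\MQ$ decomposes as a disjoint union $\MQ_1 \sqcup \MQ_2$ of posets then $L(\{p<p'\}, \MQ)$ splits as a sum of the two sub-ideals in disjoint variables; the associated simplicial complex is the join of the two smaller ones, so non-rigidity is detected through Proposition~\ref{join}. This reduces us to connected $\MQ$, which must contain a cover $q_0 \lessdot q_1$. For such $\MQ$ I would iterate further monomial localizations to extract a sub-ideal isomorphic to the edge ideal $(x_{p,q_0} x_{p', q_0},\, x_{p, q_0} x_{p', q_1},\, x_{p, q_1} x_{p', q_1})$ of the path $P_4$, whose non-rigidity is verified directly by Theorem~\ref{chordal}.

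The hardest part is making that last step uniform in $\MQ$. The naive four-variable localization at $\{x_{p,q_0}, x_{p,q_1}, x_{p',q_0}, x_{p',q_1}\}$ collapses to the unit ideal as soon as $\MQ$ contains a third element $q^\ast$, because the constant map $\varphi \equiv q^\ast$ is isotone on $\{p<p'\}$ and sends its generator to $1$. My plan to circumvent this is either to localize at a larger prime (containing an entire $p$-row, so that no constant map can escape it) and carefully track which single-variable generators appear in the localized ideal, or to bypass localization altogether and compute $T^1(\Delta)_{-\bb}$ directly via Proposition~\ref{compute}, using the chain $p < p'$ to produce a non-constant element of $\Lambda_B(\Delta)$ for an appropriately chosen $B$ of size $|\MP|$.
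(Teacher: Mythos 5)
Your direction (b) $\Rightarrow$ (a) and the first half of (a) $\Rightarrow$ (b) coincide with the paper: the antichain case is exactly $\prod_p(x_{p,q}:q\in\MQ)$ plus Corollary~\ref{product}(b), and the paper localizes at the very same prime $P=(x_{p,q},x_{p',q}:q\in\MQ)$ to reduce to showing that $L(\{p<p'\},\MQ)$ is never rigid (your isotone map $\varphi(r)=q'$ for $r\geq p'$, $\varphi(r)=q$ otherwise, is a legitimate variant of the paper's linear-extension argument). The reduction to connected $\MQ$ via Proposition~\ref{join} is also sound.

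The genuine gap is the final sub-claim, and you have correctly diagnosed it yourself: for connected $\MQ$ with $|\MQ|\geq 3$ the four-variable localization that would exhibit the $P_4$ edge ideal returns the unit ideal, and neither of your two proposed repairs is carried out. The first (localizing at a prime containing a whole $p$-row) produces an ideal with degree-one generators whose non-rigidity you would still have to establish; the second (a direct computation of $T^1(\Delta)_{-\bb}$ with $|B|=|\MP|$) is not the right degree — the obstruction lives in a mixed degree $\ab-\bb$ with $|B|=2$, not in degree $-\bb$ with $|B|=|\MP|$. The paper closes this hole with one observation that makes all case analysis unnecessary: $L(\{p<p'\},\MQ)$ is precisely the edge ideal of the Cohen--Macaulay bipartite graph attached to the poset $\MQ$ in the sense of Herzog--Hibi ($E(G)=\{\{x_{p,c},x_{p',d}\}: c\leq d\}$), and Corollary~\ref{CM} already shows such graphs are never rigid: choosing a minimal element $c_1\in\MQ$ and the independent set $A=\{x_{p,c}: c\neq c_1\}$, the graph $G\setminus N[A]$ is the single isolated edge $\{x_{p,c_1},x_{p',c_1}\}$, violating condition~($\beta$). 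If you want to salvage your route, transplant exactly this choice of $A$ into your framework; as written, the proof is incomplete at its decisive step. (A shared caveat, affecting the paper equally: when $|\MQ|=1$ and $|\MP|\geq 2$ the antichain case gives the principal ideal $(\prod_p x_{p,q})$, which Corollary~\ref{product}(b) declares non-rigid, so the statement implicitly assumes $|\MQ|\geq 2$.)
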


\begin{proof}

 (a)\implies (b): Assume there exist $a,b\in \MP$ with $a<b$. We consider the monomial prime ideal
 $P=(x_{p,q}:\; p\in\{a,b\},q\in \MQ)$, and  claim that $L(\MP,\MQ)(P)=L(\{a,b\},\MQ)$, where $\{a,b\}$ is the  poset with $a<b$.  In fact, for any minimal generator $u\in L(\MP,\MQ)(P)$, there exists  $\varphi\in \Hom(\MP,\MQ)$ such that $u$ is obtained from $u_{\varphi}$ by setting $x_{p,\varphi(p)}=1$ if $p\notin \{a,b\}$, that is, $u=x_{a,\varphi(a)}x_{b,\varphi(b)}$. This proves $L(\MP,\MQ)(P)\subseteq L(\{a,b\},\MQ)$.

Conversely, let $u=x_{a,c}x_{b,d}\in L(\{a,b\},\MQ)$, where $c,d\in\MQ$ and $c\leq d$. Let $n=|\MP|$. Since any finite partial order can be extended to a total order,  there exists  an isotone bijective map from $\MP$ to $[n]$, which we denote by $f$. We now  define a map $\varphi:\MP\to \MQ$ as follows:
\[
\varphi(p)=
\left\{
\begin{array}{ll}
c, &\mbox{if $f(p)<f(b)$},\\
d, &\mbox{otherwise}.
\end{array}
\right.
\]
Then   $\varphi\in\Hom(\MP,\MQ)$ and $\varphi(a)=c,\varphi(b)=d$, and  hence $u\in L(\MP,\MQ)(P)$. Thus our claim follows.

It follows from Corollary~\ref{CM} and its proof that $S(P)/L(\{a,b\},\MQ)$ is not rigid. Therefore, Lemma~\ref{local}(b) implies that   $S/L(\MP,\MQ)$ is not rigid.

(b)\implies(a): Let $\MP=\{p_1,\ldots,p_m\}$ with $p_i$ and $p_j$ incomparable for all $i\neq j$ and let $\MQ=\{q_1,\ldots,q_n\}$. Then
\[
L(\MP,\MQ)=\prod_{i=1}^m(x_{p_i,q_1},x_{p_i,q_2},\ldots,x_{p_i,q_n}).
\]
Thus the assertion follows from Corollary~\ref{product}(b).
\end{proof}

For an integer $n\in \NN$ we denote by $[n]$ the totally ordered set $\{1<2<\cdots <n\}$. The ideal $L([n],\MP)$ is called the {\it $n$th  letterplace ideal}, while $L(\MP,[n])$ is called the {\it $n$th co-letterplace ideal}. They are Alexander dual to each other  if we identify $x_{i,p}$ with $x_{p,i}$ for any $i\in [n]$ and $p\in \MP$. In particular, the facets of the simplicial complex associated with $L([n],\MP)$ are in bijection with the generators of $L(\MP,[n])$, and vice versa.

\begin{Corollary}
\label{letter}
Let $\MP$  a finite poset.
\begin{enumerate}
\item[(a)] $L([n],\MP)$ is rigid if and only if $n=1$.
\item[(b)] $L(\MP,[n])$ is rigid if and only if no two distinct elements of $\MP$ are comparable.
\end{enumerate}
\end{Corollary}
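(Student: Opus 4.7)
The proof is a direct application of Theorem~\ref{dan} to the two specializations of the first poset argument.

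For part (a), I would specialize Theorem~\ref{dan} to the case $\MP=[n]$ (so the letterplace ideal is $L([n],\MP)$ with the roles renamed). Theorem~\ref{dan} then says that $L([n],\MP)$ is rigid if and only if no two distinct elements of the totally ordered set $[n]=\{1<2<\cdots<n\}$ are comparable. Since $[n]$ is a chain, any two distinct elements are comparable, so this condition holds precisely when $[n]$ has at most one element, i.e., $n=1$. (One should also note the degenerate case $n=1$ genuinely yields a rigid ideal, which follows from Theorem~\ref{dan} since the single-element poset $[1]$ trivially has no two distinct comparable elements.)

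For part (b), I would simply invoke Theorem~\ref{dan} directly with $\MQ=[n]$. The condition characterizing rigidity of $L(\MP,\MQ)$ in Theorem~\ref{dan} involves only the first argument $\MP$, so the rigidity of $L(\MP,[n])$ is equivalent to the statement that no two distinct elements of $\MP$ are comparable, independently of $n$.

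There is essentially no obstacle here, since Theorem~\ref{dan} has already been established. The only point worth making explicit in the write-up is that the rigidity criterion in Theorem~\ref{dan} depends only on the first poset and not on the second, which is what allows part (b) to be formulated without any condition on $n$, and forces the stringent condition $n=1$ in part (a) where the first argument is a chain.
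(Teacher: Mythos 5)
Your proposal is correct and is exactly the intended argument: the paper states Corollary~\ref{letter} without proof as an immediate consequence of Theorem~\ref{dan}, and your reading — that the rigidity criterion depends only on the first poset argument, so that the chain $[n]$ in part (a) forces $n=1$ while part (b) is a verbatim specialization — is precisely what is meant. Nothing further is needed.
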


\section{Rigidity of graphs}
\label{graphs}

In this section we apply the results of Section~\ref{rigidity} to study the rigidity of edge ideals of a graph.

\bigskip
\noindent
{\it Inseparable graphs.} Let $G$ be a finite simple graph with vertex set $[n]$. The edge set of $G$ will be denoted by $E(G)$. Let $K$ be a field and $S=K[x_1,\ldots, x_n]$ the polynomial ring over $K$ in $n$ indeterminates $x_1,\ldots,x_n$.
The {\em edge ideal} $I(G)\subseteq S$
 of $G$ is defined to be the ideal generated by all products $x_ix_j$ with
$\{i,j\}\in E(G)$. Let $\Delta(G)$ be the simplicial complex with $I(G)=I_{\Delta(G)}$ Then
$$
\Delta(G)=\{F\subseteq [n]\:\; F \mbox{ does not contain any edges of } G\},
$$
The simplicial complex $\Delta(G)$ is called the {\em independence complex} of $G$.  The faces of $\Delta(G)$ are called the {\em independent sets} of $G$.

We call $G$ {\em inseparable} if $I(G)$ is inseparable. Let $i\in [n]$. Then $N(i)=\{j\:\; \{j,i\}\in E(G)\}$ is called the {\em neighborhood} of $i$. We denote by $G^{(i)}$ the complementary graph of the restriction $G_{N(i)}$ of $G$ to $N(i)$. In other words, $V(G^{(i)})=N(i)$ and $E(G^{(i)})=\{\{j,k\}\:\; j\neq k, j,k\in N(i) \text{ and } \{j,k\}\not\in E(G)\}$.  Note that $G^{(i)}$ is disconnected if and only if  $N(i)=A\cup B$, where $A,B\neq \emptyset$, $A\sect B=\emptyset$ and all  vertices of $A$ are adjacent to those of $B$.

\begin{Theorem}
\label{combinatorially}
 The following conditions are equivalent:
\begin{enumerate}
\item[(a)] The  graph $G$ is inseparable;

\item[(b)] $G^{(i)}$ is connected for all $i$;

\item[(c)]  $T^1(S/I(G))_{-\eb_i}=0$ for all $i$.
\end{enumerate}
\end{Theorem}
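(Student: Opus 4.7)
The plan is to apply Theorem~\ref{insep} to the independence complex $\Delta=\Delta(G)$, which already gives (a)$\iff$(c) together with their equivalence to the connectedness of every $G_{\{i\}}(\Delta(G))$. The task therefore reduces to the purely combinatorial statement that $G_{\{i\}}(\Delta(G))$ is connected if and only if $G^{(i)}$ is connected.

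Unwinding the definitions, a set $F\subseteq [n]\setminus\{i\}$ lies in $N_{\{i\}}(\Delta(G))$ precisely when $F$ is independent in $G$ and $F\cap N(i)\neq \emptyset$, and two such sets are joined by an edge of $G_{\{i\}}(\Delta(G))$ iff one is a proper subset of the other. In particular every singleton $\{j\}$ with $j\in N(i)$ is a vertex. The first step is to observe that every $F\in N_{\{i\}}(\Delta(G))$ can be reduced to some $\{j\}$ with $j\in F\cap N(i)$ via a descending chain in $G_{\{i\}}(\Delta(G))$: fix such a $j$ and delete the remaining elements of $F$ one at a time; each intermediate set remains independent and still contains $j$, hence stays in $N_{\{i\}}(\Delta(G))$. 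Consequently the connected components of $G_{\{i\}}(\Delta(G))$ are detected by the singletons $\{j\}$, $j\in N(i)$, that they contain.

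The core of the argument is then the claim that two singletons $\{j\}$ and $\{k\}$ with $j,k\in N(i)$ lie in the same component of $G_{\{i\}}(\Delta(G))$ if and only if $j$ and $k$ lie in the same component of $G^{(i)}$. For the backward direction, any edge $\{j,k\}$ of $G^{(i)}$ means $\{j,k\}\notin E(G)$, so $\{j,k\}$ is independent in $G$ and meets $N(i)$; hence $\{j\}\subsetneq \{j,k\}\supsetneq \{k\}$ is a path in $G_{\{i\}}(\Delta(G))$. For the forward direction, given a path $\{j\}=F_0,F_1,\ldots,F_m=\{k\}$ in $G_{\{i\}}(\Delta(G))$, consecutive sets $F_\ell,F_{\ell+1}$ are comparable, so their union is again independent in $G$; therefore $(F_\ell\cup F_{\ell+1})\cap N(i)$ is a clique in $G^{(i)}$. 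Picking any $j_\ell\in F_\ell\cap N(i)$ with $j_0=j$ and $j_m=k$ yields a walk $j_0,j_1,\ldots,j_m$ in $G^{(i)}$, each consecutive pair being equal or joined by an edge.

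One remaining subtlety is the case $N(i)=\emptyset$, i.e.\ $i$ is isolated in $G$: then $x_i$ does not divide any generator of $I(G)$ so no separation for the variable $x_i$ can exist, while $G^{(i)}$ and $G_{\{i\}}(\Delta(G))$ are simultaneously empty, so both (b) and (c) hold trivially at $i$. The main obstacle is the forward direction of the singleton correspondence, where a zig-zag of strict inclusions must be converted into a walk in $G^{(i)}$; this is handled cleanly by the observation that comparable independent sets in $G$ restrict to cliques in $G^{(i)}$ on their intersection with $N(i)$.
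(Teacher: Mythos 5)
Your proposal is correct and follows essentially the same route as the paper: both reduce the statement via Theorem~\ref{insep} to the purely combinatorial claim that $G_{\{i\}}(\Delta(G))$ is connected if and only if $G^{(i)}$ is, and then translate paths between the two graphs (using singletons $\{j\}$, $j\in N(i)$, and two-element independent sets as the intermediaries). The only cosmetic difference is in the converse direction, where you replace the paper's induction on the length of the zig-zag $\{k_1\}\subseteq F_1\supseteq F_2\subseteq\cdots\supseteq\{k_2\}$ by the observation that a pair of comparable independent sets restricts to a clique of $G^{(i)}$ on $N(i)$; the substance is the same.
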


\begin{proof} We set $\Delta=\Delta(G)$. By Theorem~\ref{insep}, it suffices to prove that $G_{\{i\}}(\Delta)$ is connected if and only if $G^{(i)}$ is connected for each $i$. First we note that  $V(G^{(i)})\subseteq [n]$ and $$V(G_{\{i\}}(\Delta))=\{F\subseteq [n]\:\; F \mbox{ is an independent set of } G \mbox{ and } F\cap V(G^{(i)})\neq \emptyset  \}.$$  Assume $G^{(i)}$ is connected. Given  $F_1,F_2\in V(G_{\{i\}}(\Delta))$,   there exist $k_1,k_2\in V(G^{(i)})$ with  $k_i\in F_i$ for $i=1,2$. Suppose that $k_1=k_2$. Then  $F_1, \{k_1\}, F_2$ is a path in $G_{\{i\}}(\Delta)$, and thus $F_1$ is connected to $F_2$.  Next suppose that $k_1\neq k_2$.  Since  $G^{(i)}$ is connected, there is a path $k_1=j_0, j_1, \ldots, j_s=k_2$ in $G^{(i)}$. Note that $\{j_{\ell},j_{\ell+1}\}\in V(G_{\{i\}}(\Delta))$ for all $\ell=0,\ldots,s-1$. Therefore, \[
F_1,\{k_1\},\{j_0,j_1\},\{j_1\},\{j_1,j_2\},\{j_2\},\ldots,\{j_{s-1},j_s\},\{k_2\},F_2
 \]
 is a path  in $G_{\{i\}}(\Delta)$, and so $F_1$ is connected to $F_2$. It follows that $G_{\{i\}}(\Delta)$ is connected.

Conversely, assume that $G_{\{i\}}(\Delta)$ is connected. Given  $k_1,k_2\in V(G^{(i)})$, there is a path $\{k_1\},F_1,F_2,\ldots,F_{2t-1},\{k_2\}$ in $G_{\{i\}}(\Delta)$. Hence
\[
\{k_1\}\subseteq F_1\supseteq F_2 \subseteq F_3\supseteq \ldots \subseteq F_{2t-1}\supseteq \{k_2\}.
\]
We use the induction on $t$ to show that there is a path from $k_1$ to $k_2$ in $G^{(i)}$. If  $t=1$ then $\{k_1,k_2\}\subseteq F_1$ and so $k_1$ is adjacent to $k_2$ (in $G^{(i)}$). For $t>1$, let $k_0\in F_2\cap V(G^{(i)})$.  Then $k_1$ is adjacent to $k_0$,  and by induction hypothesis there is a path in $G^{(i)}$  from $k_0$ to $k_2$. Hence  there is a path in $G^{(i)}$   from $k_1$ to $k_2$ and it follows that  $G^{(i)}$ is connected. \end{proof}

Considering  the proof of Theorem~\ref{combinatorially} one even shows that the graphs $G_{\{i\}}(\Delta)$ and $G^{(i)}$ have the same number of connected components.

\begin{Corollary}
\label{triangle}
 If $G$ contains no  triangle, then $G$ is inseparable.
\end{Corollary}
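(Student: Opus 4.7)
The plan is to reduce to Theorem~\ref{combinatorially}, which says that $G$ is inseparable if and only if $G^{(i)}$ is connected for every vertex $i\in[n]$. So it suffices to verify the connectivity of $G^{(i)}$ under the triangle-free hypothesis.

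Fix a vertex $i\in[n]$ with $N(i)\neq\emptyset$. I would argue that $G^{(i)}$ is in fact the complete graph on $N(i)$: take any two distinct vertices $j,k\in N(i)$. By definition of the neighborhood, $\{i,j\},\{i,k\}\in E(G)$. If $\{j,k\}$ were also an edge of $G$, the three vertices $i,j,k$ would form a triangle, contradicting the hypothesis. Hence $\{j,k\}\notin E(G)$, which, by definition of $G^{(i)}$ as the complement of $G_{N(i)}$, means $\{j,k\}\in E(G^{(i)})$. So every pair of distinct vertices of $G^{(i)}$ is joined by an edge, making $G^{(i)}$ a complete graph and therefore connected.

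The only residual case is when $N(i)=\emptyset$, i.e.\ $i$ is an isolated vertex of $G$. Here $G^{(i)}$ has no vertices and Theorem~\ref{combinatorially} does not directly apply, so I would handle this case separately: since $i$ is isolated, $\Delta(G)=2^{\{i\}}*\Delta(G\setminus\{i\})$, and Proposition~\ref{join} gives $T^1(\Delta(G))_{-\eb_i}=0$ because the $\eb_i$-graded piece of $T^1(\Delta(G\setminus\{i\}))[x_i]$ (the only summand possibly contributing) lives in non-negative $x_i$-degree. Combining the two cases, $T^1(\Delta(G))_{-\eb_i}=0$ for every $i$, and applying Theorem~\ref{combinatorially} (or Theorem~\ref{insep}) yields that $G$ is inseparable.

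There is no real obstacle here; the statement is essentially an immediate observation about triangle-freeness forcing $G^{(i)}$ to be complete. The only mildly technical point is the bookkeeping for isolated vertices, which is resolved cleanly through the join decomposition.
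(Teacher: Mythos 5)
Your proof is correct and is essentially the argument the paper intends: the corollary is stated without proof right after Theorem~\ref{combinatorially}, the point being exactly that triangle-freeness makes $G^{(i)}$ the complete graph on $N(i)$, hence connected. Your extra bookkeeping for isolated vertices (via the join decomposition) is harmless and slightly more careful than the paper, which silently ignores that degenerate case.
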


\bigskip
\noindent
{\it The conditions $(\alpha)$ and $(\beta)$.}
Let $G$ be a finite graph on the vertex set $[n]$ and $\Delta(G)$ be the independence complex of $G$.
Let $A$ be a subset of $[n]$.  Then $\link A$ of $\Delta(G)$ can be interpreted as the independence complex of a suitable graph if $A$ is an independent subset of $G$. In order to show this we introduce some notation.

The set
$$N(A)=\bigcup_{i\in A}N(i)$$
is called the {\em neighborhood} of $A$ (in $G$), and the set
$$N[A]=A\cup N(A)$$
is called the {\em closed neighborhood} of $A$ (in $G$).

Let $B\subset [n]$. The {\em induced subgraph} of $G$ with vertex set $B$, is the graph $G_{B}$ with edges $\{i,j\}\in E(G)$ and such that $i,j\in B$. An {\em induced cycle} of $G$ is a cycle of $G$ which is of the form $G_{B}$.
By $G\setminus A$ we denote the induced subgraph of $G$ on the vertex set $[n]\setminus A$.

\begin{Lemma}
\label{lem-graphLink}
Let $A\subseteq [n]$ be an  independent subset of $G$.
Then $\link_{\Delta(G)} A$ is the independence complex
of the graph $G\setminus N[A]$.
\end{Lemma}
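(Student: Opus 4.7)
The plan is to verify the set equality directly by unwinding both sides. By definition,
\[
\link_{\Delta(G)}A=\{F\subseteq[n]\:\; F\cap A=\emptyset,\; F\cup A\in\Delta(G)\},
\]
that is, $F\cap A=\emptyset$ and $F\cup A$ is an independent set of $G$. On the other hand, the independence complex of $G\setminus N[A]$ consists of those $F\subseteq [n]\setminus N[A]$ which contain no edge of $G$ (equivalently, no edge of $G\setminus N[A]$, since these are precisely the edges of $G$ with both endpoints in $[n]\setminus N[A]$). So the goal is to show that these two descriptions coincide.

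For the forward inclusion, I would take $F\in \link_{\Delta(G)}A$ and argue that $F\subseteq [n]\setminus N[A]$. The condition $F\cap A=\emptyset$ takes care of the $A$-part of $N[A]=A\cup N(A)$. If some $v\in F$ lay in $N(A)$, then $v$ would be adjacent in $G$ to some $a\in A$, and since $\{v,a\}\subseteq F\cup A$ this would contradict independence of $F\cup A$. Hence $F\cap N(A)=\emptyset$ as well, so $F\subseteq [n]\setminus N[A]$. Moreover, $F\subseteq F\cup A$ is independent in $G$, so $F$ is independent in $G\setminus N[A]$.

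For the reverse inclusion, I would take $F\subseteq [n]\setminus N[A]$ independent in $G\setminus N[A]$, equivalently, independent in $G$. Trivially $F\cap A=\emptyset$. To see that $F\cup A$ is independent in $G$, note that there are no edges inside $F$ (by hypothesis), no edges inside $A$ (since $A$ is assumed independent), and no edges between $F$ and $A$ (because $F\cap N(A)=\emptyset$). Hence $F\cup A\in \Delta(G)$, so $F\in\link_{\Delta(G)}A$. There is no real obstacle here; the only point requiring care is the book-keeping around $N[A]$ versus $N(A)$, which is handled by splitting into the two containments $F\cap A=\emptyset$ and $F\cap N(A)=\emptyset$ as above.
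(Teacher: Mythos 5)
Your proof is correct and follows essentially the same route as the paper's: both arguments unwind the definition of $\link_{\Delta(G)}A$ and of the independence complex of the induced subgraph $G\setminus N[A]$, with the key observations being that $F\cup A$ is independent if and only if $F$ is independent, $F$ avoids $N[A]$, and $A$ is independent (the hypothesis). The paper phrases this as a single chain of equivalences while you verify the two inclusions separately, but the content is identical.
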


\begin{proof}
Note that $F\in \link_{\Delta(G)} A$ if and only if
$F\subseteq [n]\setminus A$ and $F\cup A\in \Delta(G)$. The last condition is equivalent to saying that
$F\cup A$ does not contain any edge of $G$. Thus $F\in \link_{\Delta(G)} A$ if and only if
$F\subseteq [n]\setminus N[A]$
and $F$ does not  contain any edge of $G$. Since the set of edges of $G$  in $[n]\setminus N(A)$ is the same as the set of edges of $G$ in $G\setminus N(A)$, the desired conclusion follows.
\end{proof}

For a given subset  $B\subseteq [n]$,
we may easily express  the sets
$$\widetilde{N}_B(\Delta(G))\subseteq N_B(\Delta(G)) \subseteq [n]$$ in terms of $G$:
\[
N_B(\Delta(G))=\{F\subseteq [n]\:\; F\cap B=\emptyset,\;
F\text{ contains no edges of } G, \text{ but $F\cup B$ does}\},
\]
and
$
\widetilde{N}_B(\Delta(G))=
$
\[
\{F\in N_B(\Delta(G))\:\;  \text{there exists  $B'\subsetneq B$
 such that  $F\cup B'$  contains an edge of  $G$}\}.
\]

The following lemma lists some obvious properties of these sets.

\begin{Lemma}
\label{lem-graphNb} Let $B\subseteq [n]$. Then the following statements hold:

{\em (a)} If $|B|\geq 3$ or $|B|=2$ and $B$ is not an edge of $G$,
then $N_B(\Delta(G))=\widetilde{N}_B(\Delta(G))$.

{\em (b)} If $B$ is an edge, then $\emptyset\in N_B(\Delta(G))$,
and $\widetilde{N}_B(\Delta(G))=\emptyset$ if and only  $B$ is an isolated edge of  $G$,
i.e.\ it does not have a common vertex with  any other edge of $G$.
\end{Lemma}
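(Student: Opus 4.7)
The plan is to prove both parts by directly unfolding the definitions of $N_B(\Delta(G))$ and $\widetilde{N}_B(\Delta(G))$ and carrying out a small case analysis on how an edge of $G$ can sit inside $F\cup B$. The key observation throughout is that if $F\in N_B(\Delta(G))$ and $e=\{a,b\}$ is an edge contained in $F\cup B$, then because $F$ is itself independent we must have $e\cap B\neq \emptyset$.

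For part (a), I would start with an arbitrary $F\in N_B(\Delta(G))$, pick such a witnessing edge $e\subseteq F\cup B$, and split on $|e\cap B|$. If $|e\cap B|=1$, write $e=\{a,b\}$ with $a\in B$, $b\in F$, and take $B':=\{a\}$; this is a proper subset of $B$ whenever $|B|\geq 2$, and clearly $F\cup B'\supseteq e$. If $|e\cap B|=2$, then $e=B$, which forces $|B|=2$ and $B\in E(G)$; this is precisely the case excluded by the hypothesis of (a), so under the assumptions of (a) one can always take $B'$ properly contained in $B$ with $|B'|=2$ and $e\subseteq B'$ (possible because $|B|\geq 3$ in this branch). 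In either branch, $F\in \widetilde{N}_B(\Delta(G))$, so $N_B(\Delta(G))\subseteq \widetilde{N}_B(\Delta(G))$, and the reverse inclusion is tautological.

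For part (b), the membership $\emptyset\in N_B(\Delta(G))$ is immediate: $\emptyset\cap B=\emptyset$, $\emptyset$ carries no edges, and $\emptyset\cup B=B\in E(G)$. For the equivalence, write $B=\{u,v\}$. If $B$ is not isolated, choose another edge $e\in E(G)\setminus\{B\}$ meeting $B$; after relabelling, $e=\{u,w\}$ with $w\notin B$. Setting $F:=\{w\}$ gives $F\in N_B(\Delta(G))$, and $B':=\{u\}\subsetneq B$ witnesses $F\in \widetilde{N}_B(\Delta(G))$. Conversely, assume $B$ is an isolated edge and take any $F\in N_B(\Delta(G))$ together with a proper $B'\subsetneq B$ such that $F\cup B'$ contains an edge $e$. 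Since $|B'|\leq 1$, $e\not\subseteq B'$, and since $F$ is independent, $e\not\subseteq F$; thus $e$ meets both $B'$ and $F$. If $B'=\emptyset$ this already contradicts independence of $F$; if $B'=\{u\}$ (the case $B'=\{v\}$ is symmetric), then $e=\{u,b\}$ with $b\in F\setminus B$, so $e\neq B$ and $e$ shares the vertex $u$ with $B$, contradicting isolation. Hence no such $F$ exists and $\widetilde{N}_B(\Delta(G))=\emptyset$.

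There is no real obstacle here; the statement is a direct consequence of the definitions. The only point that requires a modicum of care is the bookkeeping in part (a) to ensure that the chosen $B'$ is strictly smaller than $B$, which is exactly where the numerical hypotheses ``$|B|\geq 3$'' and ``$|B|=2$ with $B\notin E(G)$'' are used to rule out the degenerate configuration $e=B$.
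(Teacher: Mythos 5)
The paper offers no proof of this lemma at all, introducing it merely as ``some obvious properties of these sets,'' and your direct unfolding of the definitions is correct and is clearly the intended argument: the case split on $|e\cap B|$ in (a) and the observation in (b) that any witnessing edge for $F\cup B'$ must straddle $B'$ and $F$ are exactly what is needed. The only blemish is the sentence asserting that $|e\cap B|=2$ forces $e=B$ (false when $|B|\geq 3$, where one only gets $e\subseteq B$), but since you immediately take $B'$ to be a two-element set containing $e$ and properly contained in $B$ in that branch, this is a slip of exposition rather than of logic.
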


Combining this lemma with Corollary~\ref{useful} we obtain

\begin{Corollary}
\label{cor-bBig}
Let $\bb\in \{0,1\}^{n}$ and let $B=\supp \bb$. Suppose that $|B|\geq 2$. Then $T^1(\Delta(G))_{-\bb}=0$
unless $B$ is an isolated edge in $G$. On the other hand, if $B$ is an isolated edge in $G$, then
$T^1(\Delta(G))_{-\bb}$ is one-dimensional.
\end{Corollary}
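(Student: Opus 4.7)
The plan is to read off the statement directly from Corollary~\ref{useful}(a) combined with Lemma~\ref{lem-graphNb}. Since $|B|\geq 2$, Corollary~\ref{useful}(a) identifies $T^1(\Delta(G))_{-\bb}$ with $\Lambda_B(\Delta(G))$, the space of maps $\lambda\colon N_B(\Delta(G))\to K$ that vanish on $\widetilde{N}_B(\Delta(G))$ and are constant on pairs $F\subseteq G$ in $N_B(\Delta(G))$. So the entire argument reduces to analyzing these two sets in the three cases provided by Lemma~\ref{lem-graphNb}.

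First I would dispose of the cases where $|B|\geq 3$, or $|B|=2$ and $B$ is not an edge of $G$. By Lemma~\ref{lem-graphNb}(a) the sets $N_B(\Delta(G))$ and $\widetilde{N}_B(\Delta(G))$ coincide, so the vanishing condition on $\widetilde{N}_B(\Delta(G))$ forces $\lambda\equiv 0$. Hence $\Lambda_B(\Delta(G))=0$ and $T^1(\Delta(G))_{-\bb}=0$.

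Next I would treat the remaining case, namely $|B|=2$ and $B$ is an edge of $G$. By Lemma~\ref{lem-graphNb}(b) we have $\emptyset\in N_B(\Delta(G))$. Because $\emptyset\subseteq F$ for every $F\in N_B(\Delta(G))$, the chain condition $\lambda(\emptyset)=\lambda(F)$ forces each $\lambda\in\Lambda_B(\Delta(G))$ to be constant on $N_B(\Delta(G))$. If $B$ is not an isolated edge, Lemma~\ref{lem-graphNb}(b) gives $\widetilde{N}_B(\Delta(G))\neq\emptyset$, so the constant value of $\lambda$ must be $0$ and again $T^1(\Delta(G))_{-\bb}=0$. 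If, on the contrary, $B$ is an isolated edge, then $\widetilde{N}_B(\Delta(G))=\emptyset$, so any constant function on $N_B(\Delta(G))$ lies in $\Lambda_B(\Delta(G))$, whence $\Lambda_B(\Delta(G))\cong K$ and $T^1(\Delta(G))_{-\bb}$ is one-dimensional.

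No step looks to be a real obstacle: once Corollary~\ref{useful} and Lemma~\ref{lem-graphNb} are in place, the only mild point is to notice that $\emptyset\in N_B(\Delta(G))$ together with the monotonicity constraint forces $\lambda$ to be \emph{globally} constant in the edge case, which is what collapses the computation into a single parameter.
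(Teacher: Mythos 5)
Your proof is correct and follows exactly the route the paper intends: the paper gives no separate argument for Corollary~\ref{cor-bBig} beyond the remark that it follows by combining Lemma~\ref{lem-graphNb} with Corollary~\ref{useful}, and your case analysis (using $N_B=\widetilde{N}_B$ in case (a), and $\emptyset\in N_B$ forcing $\lambda$ to be constant in the edge case) is precisely the intended verification.
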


Let $G$ be a graph on the vertex set $[n]$. Based on  Theorem~\ref{combinatorially} and Corollary~\ref{cor-bBig} we see that $\Delta(G)$ is $\emptyset$-rigid, (i.e., $T^1(\Delta(G))_{-\bb}=0$ for every $\bb\in \{0,1\}^n$) if and only if $G^{(i)}$ is connected for all $i\in [n]$ and $G$ contains no isolated edge. Combining this fact with Proposition~\ref{Proposition11} and Lemma~\ref{lem-graphLink} we obtain  the following combinatorial conditions  for a graph to be rigid:
the graph  $G$ is rigid if and only if for all independent sets $A\subseteq V(G)$ one has:
\begin{enumerate}
\item[($\alpha$)] $(G\setminus N[A])^{(i)}$ is connected for all $i\in [n]\setminus N[A]$;

\item[($\beta$)] $G\setminus N[A]$ contains no isolated edge.
\end{enumerate}

It is obvious from Corollary~\ref{triangle}  that any bipartite graph is inseparable and  so it satisfies the condition $(\alpha)$, since any induced graph of a bipartite graph is again bipartite.  But, by far, not all bipartite graphs  are rigid. For example  we have

\begin{Corollary}
\label{CM}
Let $G$ be a Cohen--Macaulay bipartite graph (i.e., $S/I(G)$ is Cohen--Macaulay). Then $G$ is not rigid.
\end{Corollary}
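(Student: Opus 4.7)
My plan is to invoke the Herzog--Hibi structure theorem for Cohen--Macaulay bipartite graphs to produce, for every vertex, an explicit independent set $A$ such that $\link_{\Delta(G)} A$ contains an isolated edge. Corollary~\ref{cor-bBig} will then yield a nonzero $T^1$-component in that link, which Proposition~\ref{Proposition11}(b) lifts to $T^1(\Delta(G))\neq 0$.

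By the Herzog--Hibi characterization of Cohen--Macaulay bipartite graphs, after relabeling one may write $V(G)=\{x_1,\ldots,x_n\}\cup\{y_1,\ldots,y_n\}$, and there is a partial order $\leq_P$ on $[n]$ with the property that $i\leq_P j$ implies $i\leq j$ in the natural order, such that $\{x_i,y_j\}\in E(G)$ if and only if $i\leq_P j$. In particular every $\{x_i,y_i\}$ is an edge of $G$, and
\[
N(y_i)=\{x_\ell : \ell\leq_P i\},\qquad N(x_k)=\{y_m : k\leq_P m\}.
\]

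Now I would fix any $j\in[n]$ and set
\[
A := \{y_i : i<_P j\}\cup\{x_k : j<_P k\}.
\]
First I would verify that $A$ is independent in $G$: an edge $\{y_i,x_k\}$ inside $A$ would require $k\leq_P i$, but $i<_P j<_P k$ gives $i<_P k$ by transitivity, a contradiction. Using the neighborhood formulas above together with transitivity of $\leq_P$, I would then compute
\[
N[A]=\{x_\ell,y_\ell : \ell\neq j,\ \ell \text{ is comparable to } j \text{ in } \leq_P\}.
\]
Hence $G':=G\setminus N[A]$ has vertex set $\{x_j,y_j\}\cup\bigcup_{\ell \text{ incomp.\ to } j}\{x_\ell,y_\ell\}$, and the only edge of $G'$ incident to $x_j$ or $y_j$ is $\{x_j,y_j\}$ itself, because every other vertex of $G'$ carries an index incomparable to $j$.

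Thus $\{x_j,y_j\}$ is an isolated edge of $G'$. By Lemma~\ref{lem-graphLink}, $\link_{\Delta(G)}A=\Delta(G')$, so Corollary~\ref{cor-bBig} applied with $B=\{x_j,y_j\}$ gives $T^1(\Delta(G'))_{-\bb}\neq 0$, where $\bb$ is the indicator vector of $B$. Proposition~\ref{Proposition11}(b) then transports this to $T^1(\Delta(G))_{\ab-\bb}\neq 0$, where $\ab$ is the indicator vector of $A$. Consequently $G$ is not rigid. The delicate point in this plan is the exact computation of $N[A]$: it relies essentially on the transitivity of the Herzog--Hibi order $\leq_P$, which is precisely what forces removing every index comparable to $j$ to leave the diagonal edge $\{x_j,y_j\}$ isolated in $G'$.
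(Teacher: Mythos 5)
Your proof is correct and follows essentially the same route as the paper: both invoke the Herzog--Hibi structure theorem and produce an independent set $A$ for which $G\setminus N[A]$ retains a diagonal edge $\{x_j,y_j\}$ as an isolated edge, then conclude via the isolated-edge criterion (Corollary~\ref{cor-bBig}, i.e.\ failure of condition~($\beta$)). The paper just makes the cheaper choice of taking $j$ minimal and $A$ equal to all of one side of the bipartition except $x_j$ (after first reducing to $G$ connected via Proposition~\ref{join}), which collapses $G\setminus N[A]$ to the single edge $\{x_j,y_j\}$; your $A$ works for every $j$ but rests on the same idea.
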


\begin{proof} By Proposition~\ref{join} we may assume that $G$ is connected. By \cite[Theorem 3.4]{HH} the graph $G$, after a suitable relabeling of its vertices, arises from a finite poset $P=\{p_1,\ldots,p_n\}$  as follows: $V(G)=\{p_1,\ldots,p_n, q_1,\ldots,q_n\}$ and $E(G)=\{\{p_i,q_j\}\:\; p_i\leq p_j\}$. We may assume that $p_1$ is a minimal element in $P$. Let $A=\{p_2,\ldots,p_n\}$. Then  $N[A]=\{p_2,\ldots,p_n,q_2,\ldots,q_n\}$, and $G\setminus N[A]=\{p_1,q_1\}$. It follows from ($\beta$)  that $G$ is not rigid.
\end{proof}

Recall that a vertex $v$ is called a {\em free vertex} if $\deg v=1$, and an edge $e$ is called a {\em leaf} if it has a free vertex. An edge $e$ of $G$ is called {\em branch}, if there exists a leaf $e'$  with $e'\neq e$ such that $e\sect e'\neq \emptyset$.

Let $e=\{i,j\}\in E(G)$. We denote by  $ N_0(e)$ the set $(N(i)\cup N(j))\setminus\{i,j\}$.

\medskip
In the next proposition we  present  sufficient conditions for graph to satisfy  $(\alpha)$ or ($\beta$).

\begin{Proposition}
\label{sufficient}
Let $G$ be a finite graph on the vertex set $[n]$.
\begin{enumerate}
\item[ (a)] Suppose that each vertex of a $3$-cycle in $G$ belongs to a leaf. Then  $G$ satisfies $(\alpha)$, and hence $G$ is inseparable. In particular, if $G$ does not contain any $3$-cycle, then $G$ is inseparable.
\item[(b)] Suppose that each edge of $G$ is a branch. Then $G$ satisfies $(\beta)$.
\item[ (c)]  Suppose that each edge of $G$ is a branch and each vertex of a $3$-cycle of $G$ belongs to a leaf. Then $G$ is rigid.
\end{enumerate}
\end{Proposition}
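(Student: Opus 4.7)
The plan is to invoke the combinatorial characterization stated just before the proposition: $G$ is rigid if and only if for every independent set $A \subseteq V(G)$ both $(\alpha)$ and $(\beta)$ hold. Parts (a) and (b) establish $(\alpha)$ and $(\beta)$ respectively for $G$ itself, and part (c) will follow at once by combining them.

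For (a), I would fix an independent set $A$, set $H := G \setminus N[A]$, pick $i \in V(H)$, and show that $H^{(i)}$ is connected. Split into two cases. If $i$ lies in no $3$-cycle of $H$, then $N_H(i)$ is an independent set of $H$; hence no edge of $H$ joins two distinct neighbors of $i$, so $H^{(i)}$ is the complete graph on $N_H(i)$, which is connected (vacuously when $|N_H(i)| \leq 1$). Otherwise $i$ belongs to a $3$-cycle of $H$, and therefore of $G$; since $\deg_G(i) \geq 2$, the hypothesis supplies a leaf of $G$ of the form $\{i,u\}$ with $u$ free in $G$. The crux is a short \emph{survival} argument: $u$ necessarily lies in $V(H)$. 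Indeed, if $u \in N[A]$ then either $u \in A$, which forces $i \in N(A) \subseteq N[A]$ and contradicts $i \in V(H)$; or $u \in N(A)$, in which case the unique neighbor of $u$ in $G$ being $i$ forces $i \in A \subseteq N[A]$, again contradicting $i \in V(H)$. So $u \in V(H)$, and $u$ remains a free vertex of $H$ attached only to $i$; hence $u$ is non-adjacent in $H$ to every other element of $N_H(i)$ and therefore adjacent in $H^{(i)}$ to every other vertex, yielding connectedness.

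For (b), again fix $A$, set $H := G \setminus N[A]$, and argue by contradiction: suppose $\{i,j\}$ is an isolated edge of $H$. Since every edge of $G$ is a branch, there is a leaf $e' \neq \{i,j\}$ of $G$ meeting $\{i,j\}$; by symmetry I may assume $e' = \{i,u\}$ with $u \neq j$ a free vertex of $G$. Exactly the same survival argument as in (a), applied to $i \in V(H)$, forces $u \in V(H)$. Then $u \in N_H(i) \setminus \{j\}$, contradicting the fact that $\{i,j\}$ was isolated in $H$.

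Part (c) is then immediate from the combined conclusions of (a) and (b) together with the characterization of rigidity. The main technical load is the short survival lemma for the free vertex $u$: it is the single engine driving both (a) and (b), and once it is cleanly formulated, the remaining verifications are essentially routine case analysis.
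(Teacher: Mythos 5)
Your proposal is correct and follows essentially the same route as the paper: both proofs of (a) and (b) hinge on exactly your ``survival'' argument for the free vertex $u$ of the leaf at $i$ (if $u\in A$ then $i\in N(A)\subseteq N[A]$, and if $u\in N(A)$ then $\deg u=1$ forces $i\in A$, either way contradicting $i\notin N[A]$). The only cosmetic difference is that you argue directly, exhibiting $u$ as a dominating vertex of $(G\setminus N[A])^{(i)}$, whereas the paper argues by contradiction from a disconnection of $N(i)\setminus N[A]$ into $B\cup C$ in part (a) and by showing every degree-one vertex of $N_0(e)$ would have to lie in $N[A]$ in part (b).
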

\begin{proof}
(a) Suppose that $(\alpha)$ is not satisfied. Then   there exists an independent set $A\subseteq [n]$ and $i\in [n]\setminus N[A]$ such that $N(i)\sect [n]\setminus N[A]=B\cup C$ with $B, C\neq \emptyset$ and $B\sect C=\emptyset$ and $\{j,k\}\in E(G\setminus N[A])\subseteq E(G)$ for all $j\in B$ and all $k\in C$. Since $B, C\neq \emptyset$ there exist $j\in B$ and  $k\in C$ such that $\{j,k\}\in E(G)$. Thus $i$ is a vertex of a $3$-cycle in $G$. By assumption  there exists a leaf  $\{i,t\}$ in $G$. Suppose that $t\in  N[A]$. If $t\in A$ then $i\in  N[A]$, a contradiction. Thus $t\in  N(A)$. Since $\deg t=1$ it follows that $i\in A$, again a contradiction. Therefore we see that $t\in [n]\setminus  N[A]$. This implies that $t\in B\cup C$. We may assume that $t\in B$. So $\{t,k\}\in E(G)$, a contradiction since $N(t)=\{i\}$.

It follows from Theorem~\ref{combinatorially} that $G$ is inseparable if it satisfies $(\alpha)$.
Suppose now that $G$ does not contain any $3$-cycle. Then, by the first part of the statement, $G$ satisfies condition $(\alpha)$, and so it is inseparable.
%In particular, $G^{(i)}$ is connected for all $i\in[n]$. Thus the assertion follows from Theorem~\ref{combinatorially}.

(b) Suppose that $G$ does not satisfy ($\beta$). Then there exists an independent set $A$ of the vertices of $G$ such that $G\setminus  N[A]$ contains an isolated edge, say $e=\{i,j\}$.   We show that $e$ is not a branch  of $G$ and so we get a contradiction. Let $v\in  N_0(e)$ and let $\deg v=1$. Then we have  either $N(v)=\{i\}$ or $N(v)=\{j\}$. Without loss of generality we may assume that $N(v)=\{i\}$. Since $e$ is an isolated edge in $G\setminus  N[A]$ we have $v\in   N[A]$. Suppose that $v\in A$. Then  $\{i\}=N(v)\subseteq  N(A)$, a contradiction. Thus  $v\in  N(A)$, and so there exists $t\in A$ such that $v\in N(t)$. Since $N(v)=\{i\}$ it follows that $t=i$. This implies that $i\in A$,  which is again a contradiction.   Hence $\deg v\geq 2$, as desired.

(c) follows from (a) and (b).
\end{proof}

\bigskip
\noindent
{\it Rigid graphs.}
The next two lemmata will help us to classify the rigid chordal  graphs and rigid graphs without induced cycles of length $4$,$5$ or $6$.

Recall that a graph $G$ is {\em chordal} if any cycle of length $\geq 4$ has chord. A chord of a cycle $C$ is an edge $\{i,j\}$ of $G$ with $i,j\in V(C)$ which is not an edge of $C$

\begin{Lemma}
\label{induced}
Let $G$ be a  rigid graph on the vertex set $[n]$, and let $A$ be an independent set of  vertices of $G$. Then $G\setminus  N[A]$ is rigid.
\end{Lemma}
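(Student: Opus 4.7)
The plan is to reduce this to a statement that has already been proved about simplicial complexes, namely that links of a rigid simplicial complex are rigid. The essential bridge is Lemma~\ref{lem-graphLink}, which identifies the independence complex of $G\setminus N[A]$ with the link of $A$ in $\Delta(G)$ whenever $A$ is independent in $G$.

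Concretely, first I would observe that since $A$ is an independent set of $G$, it is a face of $\Delta(G)$, so $\link_{\Delta(G)} A$ is defined. By Lemma~\ref{lem-graphLink} we have the identification
\[
\link_{\Delta(G)} A \;=\; \Delta(G\setminus N[A]).
\]
Second, since $G$ is assumed rigid, the Stanley--Reisner ring $K[\Delta(G)]$ is rigid. I would then invoke the general fact, which appears in the paper as a direct consequence of Proposition~\ref{Proposition11}(b) (and is also stated geometrically in Lemma~\ref{local}(c)), that every link of a rigid simplicial complex is again rigid. Applied to $A\in\Delta(G)$, this yields that $\link_{\Delta(G)} A$ is rigid, hence by the identification above $\Delta(G\setminus N[A])$ is rigid, which by definition means that the graph $G\setminus N[A]$ is rigid.

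There is essentially no obstacle here: the lemma is a translation of an already established statement about simplicial complexes into the language of graphs, and the only input needed beyond invoking Lemma~\ref{lem-graphLink} is that independent sets of $G$ are exactly the faces of $\Delta(G)$, so that $A$ indexes a legitimate link. I would keep the write-up to a few lines, citing Proposition~\ref{Proposition11}(b), Lemma~\ref{local}(c) and Lemma~\ref{lem-graphLink}.
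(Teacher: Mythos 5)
Your proof is correct, but it takes a shorter, more structural route than the paper does. The paper proves this lemma by working entirely at the level of the combinatorial characterization of rigidity via conditions $(\alpha)$ and $(\beta)$: it takes an arbitrary independent set $B$ of $G\setminus N[A]$, checks that $A\cup B$ is independent in $G$, verifies the set identity $N[A\cup B]=N[A]\cup N[B]$ so that $(G\setminus N[A])\setminus N[B]=G\setminus N[A\cup B]$, and then reads off $(\alpha)$ and $(\beta)$ for $G\setminus N[A]$ from the corresponding conditions for $G$. You instead collapse this to a single application of the fact that links of rigid complexes are rigid (Proposition~\ref{Proposition11}(b), equivalently Lemma~\ref{local}(c)), combined with the identification $\link_{\Delta(G)}A=\Delta(G\setminus N[A])$ from Lemma~\ref{lem-graphLink}. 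The two arguments rest on the same underlying fact --- the graph identity the paper verifies is exactly the graph-theoretic shadow of ``a link of a link is a link'' --- but yours avoids re-deriving it and is arguably cleaner. The only point worth being careful about (and which your write-up implicitly handles correctly) is the vertex-set convention: $\link_{\Delta(G)}A$ as defined in the paper carries the ghost vertices $N(A)$, which are absent from $\Delta(G\setminus N[A])$; this is harmless because such vertices are not in $[\link_{\Delta(G)}A]$ and by Proposition~\ref{Proposition11}(a) they contribute nothing to $T^1$, so rigidity is unaffected. Your proof is a valid and slightly more economical alternative.
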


\begin{proof}
Let $B$ be an independent set of  vertices of $G\setminus  N[A]$. Then $A\cup B$ is an independent set of vertices of $G$. Indeed, suppose that $\{i,j\}\in E(G)$ for some $i\in A$ and some $j\in B$. Then $j\in N(i)$ implies that $j\in  N(A)\subseteq  N[A]$. Since $ N[A]\sect B=\emptyset$ it  follows that $j\notin B$ which is a contradiction.

Clearly, $ N[A]\cup N[B]=N[A\cup B]$. Thus, $([n]\setminus  N[A])\setminus N[B]=[n]\setminus N[A\cup B]$. Since for any subset $C$ of $[n]$,  $G\setminus C$ is an induced subgraph of $G$, we have $(G\setminus  N[A])\setminus N[B]=G\setminus N[A\cup B]$. Since rigidity is characterized by $(\alpha)$ and $(\beta)$,  the statement follows.
%Suppose that $G\setminus  N[A]$ is not rigid. Suppose that condition $(\alpha)$  is not satisfied for  $G\setminus  N[A]$. Then there exists an independent set $b$ of the vertices of $G\setminus  N[A]$ such that $(G\setminus \bar{a\cup b})^{(i)}$ is disconnected for some $i\in V(G\setminus  N[A])$. It follows that $G$ is not rigid, a contradiction.
\end{proof}

Let $G$ be a   graph on the vertex set $[n]$. For each $e=\{i,j\}\in E(G)$ we define the set $O_G(e)$ as follows:
\begin{eqnarray*}
O_G(e)=\{v':\ v'\in \bigcup_{v\in  N_0(e)}N(v), \ N(v')\sect \{i,j\}=\emptyset\}.
\end{eqnarray*}

\begin{Lemma}
\label{inducedrigid}
Let $G$ be a  rigid graph on the vertex set $[n]$ which does not contain any induced $4$-cycle, and let $e=\{i,j\}$ be an edge of $G$ which is not a branch. Then $O_G(e)\neq \emptyset$.
\end{Lemma}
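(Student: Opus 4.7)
The plan is to argue by contradiction: assuming $O_G(e)=\emptyset$, I will exhibit a violation of rigidity using Theorem~\ref{combinatorially} or Corollary~\ref{cor-bBig}. The first move is to identify the connected component $H$ of $G$ that contains the edge $e$. The condition $O_G(e)=\emptyset$ says that for every $v\in N_0(e)$ and every $v'\in N(v)$ one has $v'\in N(i)\cup N(j)=\{i,j\}\cup N_0(e)$, so the set $\{i,j\}\cup N_0(e)$ is closed under taking neighbors in $G$ and must therefore equal $V(H)$. I will then decompose
\[
N_0(e)=A_1\cup B_1\cup C,
\]
where $A_1=N(i)\setminus(\{j\}\cup N(j))$, $B_1=N(j)\setminus(\{i\}\cup N(i))$, and $C=N(i)\cap N(j)$; this is a disjoint union. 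Since $e$ is not a branch, for every $v\in A_1\cup B_1$ we have $\deg_G v\ge 2$: otherwise $\{v,i\}$ or $\{v,j\}$ would be a leaf distinct from $e$ meeting $e$.

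Suppose first that $A_1\ne\emptyset$ (the case $B_1\ne\emptyset$ is symmetric) and pick $v\in A_1$. The key step, which is where the no-induced-$4$-cycle hypothesis enters, is to prove that $v$ has no neighbor in $B_1$: if $w\in N(v)\cap B_1$, then $i,v,w,j$ would form a cycle whose only two possible chords $\{i,w\}$ and $\{v,j\}$ are ruled out by the definitions of $B_1$ and $A_1$ respectively, giving an induced $4$-cycle in $G$. Hence $N(v)\subseteq\{i\}\cup A_1\cup C\subseteq\{i\}\cup N(i)$, so every vertex in $N(v)\setminus\{i\}$ is adjacent to $i$ in $G$. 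Consequently, in the complementary graph $G^{(v)}$ the vertex $i$ has no neighbors; combined with $|N(v)|=\deg_G v\ge 2$, this forces $G^{(v)}$ to be disconnected, contradicting rigidity via Theorem~\ref{combinatorially}.

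It remains to handle $A_1=B_1=\emptyset$, where $V(H)=\{i,j\}\cup C$. If $C=\emptyset$ then $H$ is the single edge $e$, which is therefore an isolated edge of $G$, and Corollary~\ref{cor-bBig} yields $T^1(\Delta(G))_{-(\eb_i+\eb_j)}\ne 0$, contradicting rigidity. Otherwise $C\ne\emptyset$, and since every $v\in C$ is adjacent to $j$, the vertex $j$ is isolated in $G^{(i)}$, whose vertex set $N(i)=\{j\}\cup C$ has at least two elements; hence $G^{(i)}$ is disconnected, once more contradicting rigidity. The main conceptual obstacle is spotting the induced $4$-cycle in the second paragraph; once that observation is in place, the remaining case analysis is routine.
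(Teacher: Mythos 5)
Your proof is correct. It arrives at the same contradiction as the paper --- an isolated vertex in some $G^{(w)}$, which violates rigidity via Theorem~\ref{combinatorially}, with the absence of induced $4$-cycles used to exclude edges between the ``private'' neighbours of $i$ and those of $j$ --- but it is organized differently. The paper's proof is a local vertex chase: it first shows every $v\in N_0(e)$ has a neighbour outside $\{i,j\}$, picks $v\in N(i)$, extracts a neighbour $v'\notin N(i)$ (else $i$ is isolated in $G^{(v)}$), forces $v'\in N(j)$ and then $v\in N(j)$ by excluding the $4$-cycle on $i,v,v',j$, and finally shows $j$ is isolated in $G^{(v')}$. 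You instead make the global observation that $O_G(e)=\emptyset$ traps the whole connected component of $e$ inside $\{i,j\}\cup N_0(e)$, partition $N_0(e)=A_1\cup B_1\cup C$, and use the induced-$4$-cycle argument once to kill all $A_1$--$B_1$ edges, so that any $v\in A_1$ (or $B_1$) immediately satisfies $N(v)\subseteq\{i\}\cup N(i)$ (resp.\ $\{j\}\cup N(j)$); the leftover case $A_1=B_1=\emptyset$ is dispatched directly (isolated edge, handled by Corollary~\ref{cor-bBig}, or $j$ isolated in $G^{(i)}$). Both arguments invoke the non-branch hypothesis, the $4$-cycle hypothesis and rigidity at the same logical junctures; yours buys a cleaner case structure and avoids the chase at the cost of the (easy) extra observation that the component is confined.
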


\begin{proof}
First we show that  for all $v\in  N_0(e)$ we have $N(v)\setminus\{i,j\}\neq \emptyset$. Note that since $G$ is rigid the edge $e$ is not isolated, and so $ N_0(e)\neq \emptyset$.  Suppose  that there exists $v\in  N_0(e)$ such that $N(v)\subseteq \{i,j\}$. Without loss of generality we may assume that $v\in N(i)$. Since $e$ is not a branch we have $\deg v\geq 2$. It follows that $N(v)=\{i,j\}$.
Therefore $G^{(v)}$ consists of two isolated vertices $i$ and $j$ which contradicts the fact that $G$ is rigid.

Now suppose that $O_G(e)=\emptyset$, i.e., for all $v\in  N_0(e)$ and for all $v'\in N(v)$ we have $N(v')\sect \{i,j\}\neq\emptyset$. Since $ N_0(e)\neq \emptyset$ we may assume that there exists $v\in N(i)$ with $v\neq j$. As shown above $N(v)\setminus \{i,j\}\neq \emptyset$.
Suppose that for all  $v'\in N(v)\setminus \{i,j\}$
 we have $i\in N(v')$. Then $i$ is an isolated vertex in $G^{(v)}$, a contradiction. Thus there exists
$v'\in N(v)\setminus \{i,j\}$ such that $v'\notin N(i)$. Hence $v'\in N(j)$ which  implies that $v\in N(j)$ because $G$ does not contain any induced $4$-cycle. Since $v'\in N(j)$ and $v'\neq i,j$ we have $v'\in  N_0(e)$. So $N(v'')\sect \{i,j\}\neq \emptyset$ for all $v''\in N(v')$. As shown above $N(v')\setminus \{i,j\}\neq \emptyset$. Suppose that there exists  $v''\in N(v')\setminus \{i,j\}$
 such that  $v''\notin N(j)$. Then $v''\in N(i)$. It follows that $G$ contains the induced $4$-cycle with vertices $i$, $j$, $v'$ and $v''$,  a contradiction. Consequently,
   $v''\in N(j)$  for all $v''\in N(v')\setminus \{i,j\}$. Then $j$ is an isolated vertex in $G^{(v')}$, a contradiction.
This completes the proof.
\end{proof}

\begin{Theorem}
\label{chordal}
Let $G$ be a   graph on the vertex set $[n]$ such that  $G$ does not contain any induced cycle of length $4$, $5$ or $6$. Then $G$ is   rigid  if and only if each edge of $G$ is a branch and each vertex of a $3$-cycle of $G$  belongs to a leaf.
\end{Theorem}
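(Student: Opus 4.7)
The ``if'' direction is immediate from Proposition~\ref{sufficient}(c). For the converse, I would argue by induction on $|V(G)|$ via a minimal counterexample. Suppose $G$ is a rigid graph with no induced $C_4$, $C_5$, or $C_6$ that violates the conclusion, of minimum order. Every proper induced subgraph $G \setminus N[A]$ arising from an independent set $A \neq \emptyset$ is rigid by Lemma~\ref{induced}, contains no induced short cycle, and is smaller, so by minimality satisfies the two combinatorial conditions. I would prove the two conclusions in turn.

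\emph{Every edge is a branch.} Suppose $e = \{i,j\}$ is not a branch. Condition $(\beta)$ with $A = \emptyset$ forbids $e$ from being isolated, so $N_0(e) \neq \emptyset$. Since $G$ has no induced $C_4$, Lemma~\ref{inducedrigid} supplies some $v' \in O_G(e)$: a vertex with $v' \in N(v)$ for some $v \in N_0(e)$ (say $v \in N(i)$) and $\{v',i\}, \{v',j\} \notin E(G)$. Then $G' = G \setminus N[\{v'\}]$ is rigid by Lemma~\ref{induced}, and $e$ is still an edge of $G'$ because $v' \notin N(i)\cup N(j)$. By the inductive hypothesis, $e$ is a branch in $G'$, so there is $w \in N_0^{G'}(e)$ with $\deg_{G'} w = 1$. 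WLOG $w \in N_{G'}(i)$, so the unique $G'$-neighbor of $w$ is $i$, and every other $G$-neighbor of $w$ lies in $\{v'\}\cup N(v')$. Since $w \notin N(v')$, the vertex $v'$ is not adjacent to $w$ in $G$. Because $e$ is not a branch in $G$, $\deg_G w \geq 2$, so there exists $u \in N_G(w) \cap N(v')$ with $u \notin \{i,j\}$ (the exclusion of $j$ uses $v' \notin N(j)$).

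This produces the closed walk $i - v - v' - u - w - i$. When $u \neq v$ its five vertices are pairwise distinct, and the only forced non-chords are $\{i,v'\}$ and $\{v',w\}$. A case analysis on the remaining potential chords $\{i,u\}$, $\{v,u\}$, $\{v,w\}$ produces an induced cycle of length $4$, $5$, or $6$ in $G$: if none is an edge, the walk itself is an induced $C_5$; if exactly $\{i,u\}$ is an edge, then $v - v' - u - i - v$ is an induced $C_4$ unless $\{v,u\}$ is also an edge, and continuing in this fashion one reaches a $C_4$ or $C_6$ in every subcase using the existing 3-cycle adjacencies and the constraints $\{v',i\}, \{v',j\}, \{v',w\} \notin E$. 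The degenerate case $u = v$ must be treated separately: here $N_G(w) = \{i,v\}$ is forced (or a fresh $u \in N(v') \setminus \{v\}$ can be selected), and applying Lemma~\ref{inducedrigid} to the edge $\{i,v\}$ together with the neighborhood of $v'$ again produces a forbidden induced cycle. Each case contradicts the hypothesis, proving (a).

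\emph{Every vertex of a $3$-cycle lies on a leaf.} Assuming (a), suppose $\{i,j,k\}$ is a 3-cycle with $i$ having no free neighbor. If $\deg_G i = 2$, then $N(i) = \{j,k\}$ and $\{j,k\} \in E(G)$, so $G^{(i)}$ is two isolated vertices, contradicting $(\alpha)$ with $A=\emptyset$. Hence $\deg_G i \geq 3$, and $i$ has a neighbor $\ell \notin \{j,k\}$, with $\deg_G \ell \geq 2$. By (a) applied to the branches $\{i,j\}, \{i,k\}, \{j,k\}$, both $j$ and $k$ carry free neighbors $w_j, w_k$ (since $i$ carries none). Consider the independent set $A = \{w_j, w_k\}$, giving $N[A] = \{w_j,w_k,j,k\}$; then $i \in V(G')$ and $\deg_{G'} i = \deg_G i - 2 \geq 1$. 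Analyzing $G^{(i)}$ and $(G \setminus N[\{w_j\}])^{(\ell)}$ using connectedness forced by $(\alpha)$, the absence of short induced cycles, and the constraint $\deg_G \ell \geq 2$ for every $\ell \in N(i)$, produces an induced $C_4$, $C_5$, or $C_6$ (the new neighbors of $i$ and their interactions with $j,k,w_j,w_k$ cannot simultaneously avoid all three forbidden lengths), contradicting the hypothesis.

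\emph{Main obstacle.} The principal difficulty is the subcase analysis in (a), especially handling the degenerate configuration $u = v$ and verifying that in every remaining subcase at least one of $C_4$, $C_5$, $C_6$ is actually induced (no unexpected chord shortens it below length $4$ or past $6$). This requires careful bookkeeping of which of the edges $\{i,u\}, \{v,u\}, \{v,w\}$ are present, together with repeated use of the exclusions $\{v',i\}, \{v',j\}, \{v',w\} \notin E(G)$ coming from the definition of $O_G(e)$.
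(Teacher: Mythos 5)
Your ``if'' direction and your use of Lemma~\ref{inducedrigid} to obtain $O_G(e)\neq\emptyset$ agree with the paper, but the core of the converse is missing and the route you sketch does not close. For the claim that every edge is a branch, the paper does not hunt for a forbidden induced cycle at all: it constructs an independent set $A\subseteq O_G(e)$ such that $e$ becomes an \emph{isolated} edge of $G\setminus N[A]$, violating $(\beta)$ directly. That construction rests on a step you do not have, namely that for adjacent $u,v\in O_G(e)$ the sets $N(u)\cap N_0(e)$ and $N(v)\cap N_0(e)$ are nested (this is precisely where the exclusion of induced $C_4$, $C_5$, $C_6$ enters); this permits choosing from each component of the graph induced on $O_G(e)$ the vertices whose traces on $N_0(e)$ are maximal, and one then verifies that $A$ is independent, that $O_{G\setminus N[A]}(e)=\emptyset$, that $e$ is still not a branch there, and concludes via Lemmas~\ref{induced} and \ref{inducedrigid}. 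Your substitute --- delete $N[v']$ for a single $v'\in O_G(e)$, invoke minimality to get a $G'$-leaf $w$ at $i$, and extract a forbidden cycle from the walk $i,v,v',u,w$ --- fails in at least one case you would have to handle: if $\{v,u\}$ and $\{v,w\}$ are edges while $\{i,u\}$ is not, the induced subgraph on $\{i,v,v',u,w\}$ (with the forced non-edges $\{i,v'\}$, $\{v',w\}$) contains only triangles; every $4$-, $5$- or $6$-cycle through these vertices has a chord, and nothing in your setup excludes this configuration, which is fully compatible with $w$ having degree one in $G'$ and with $e$ not being a branch in $G$. Since you yourself defer exactly this analysis as the ``main obstacle,'' the argument is not a proof.

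The second half has a similar defect. Taking $A=\{w_j,w_k\}$ removes $j$ and $k$ from the graph, which destroys the very triangle you want to exploit, and the concluding appeal to ``analyzing $G^{(i)}$ and $(G\setminus N[\{w_j\}])^{(\ell)}$ \dots produces an induced $C_4$, $C_5$, or $C_6$'' is unsupported. The paper's argument is short and needs no forbidden cycles here: since every edge $\{i,v\}$ with $v\in N(i)$ is a branch and $i$ has no free neighbour, every $v\in N(i)$ has a free neighbour $i_v$; taking $A=\{i_v:\ v\in N(i)\setminus\{j,k\}\}$, an independent set of degree-one vertices, removes from $G$ exactly the neighbours of $i$ other than $j$ and $k$, so that $(G\setminus N[A])^{(i)}$ consists of the two isolated vertices $j$ and $k$, contradicting $(\alpha)$. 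Both halves need to be redone along these lines.
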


\begin{proof}
 By using part (c) of Proposition~\ref{sufficient} it is enough to show that if a rigid graph  $G$  does not contain any induced cycle of length $4$, $5$ or $6$, then  each edge of $G$ is a branch and each vertex of a $3$-cycle of $G$  belongs to a leaf.

Suppose that $e=\{i,j\}\in E(G)$ is not a branch.  By Lemma~\ref{inducedrigid}, $O_G(e)\neq \emptyset$.  We claim that there exists $A\subseteq O_G(e)$ such that $A$ is independent in $G$ and $e$ is an isolated edge in $G\setminus  N[A]$. This will imply that $G$ is not rigid, a contradiction.
Let $G'$ be the induced subgraph  of $G$ on the vertex set  $O_G(e)$ and let $C_1, \ldots, C_m$ be   the connected components of $G'$. Let $u,v\in V(C_k)$ such that  $\{u,v\}\in E(G)$. We show that either  $N(u)\cap  N_0(e)\subseteq N(v)\cap N_0(e)$ or $N(v)\cap  N_0(e)\subseteq N(u)\cap  N_0(e)$.

Assume that $N(u)\cap  N_0(e)\not\subseteq N(v)\cap  N_0(e)$. Then there exists  $x\in N(u)\cap  N_0(e)$ such that $\{v,x\}$ is not an edge in $G$. Without loss of generality  we may assume that $i\in N(x)$.

Let $y\in N(v)\cap N_0(e)$, and first suppose that  $y\in N(i)$. Then we have the  $5$-cycle with vertices $i$, $x$, $u$, $ v$ and $y$.  Since $v,u\in O_G(e)$ it follows that $\{u,i\}, \{v,i\} \notin E(G)$ and since $\{v,x\}\notin E(G) $ it follows that $\{u,y\}\in E(G)$ because $G$ does not contain any induced cycle of length $4$ and $5$. Therefore $y\in N(u)\sect  N_0(e)$. On the other hand if  $y\in N(j)$, then we  have the $6$-cycle  with vertices $i$, $x$, $u$, $v$, $y$ and $j$. Note that  $\{v,x\},\{u,i\}, \{u,j\}, \{v,i\}, \{v,j\}\notin E(G)$. This implies that  $\{v,y\}\in E(G)$ since $G$ does not contain any induced cycle of length $4$, $5$ and $6$. Thus either  $N(u)\cap N_0(e)\subseteq N(v)\cap  N_0(e)$ or $N(v)\cap N_0(e)\subseteq N(u)\cap  N_0(e)$, as desired.

Now given $C_k$ we choose a maximal set $D_k=\{u_{1}, \ldots, u_{l}\}\subseteq V(C_k)$ with the property that the sets $N(u_{r})\sect  N_0(e)$ are pairwise different. After having defined the set $D_k$ for each $C_k$ we are ready to define the set $A$.

We let $A$ be the unique subset of $O_G(e)$ such that $A\sect C_k$ consists of all elements $u_r\in D_k$ with the property that $N(u_r)\sect  N_0(e)\not\subseteq N(u_s)\sect N_0(e)$ for all $u_s\in D_k$ with $s\neq r$.

In order to complete the proof we  show that $A$ is independent in $G$ and $e$ is an isolated edge in $G\setminus  N[A]$. Let $u,v\in a$ and assume that $\{u,v\}\in E(G)$. Then there exists $k$ such that $u,v\in D_k\subseteq V(C_k)$. Therefore either  $N(u)\cap  N_0(e)\subseteq N(v)\cap  N_0(e)$ or $N(v)\cap  N_0(e)\subseteq N(u)\cap  N_0(e)$. Thus by the choice of $A$, it follows that $u=v$, a contradiction. So $A$ is an independent set of $G$.

 Finally  we show that $e$ is an isolated edge of $G'':=G\setminus  N[A]$.  Note that for any $v\in  N_0(e)\sect V(G'')$ and for any $v'\in N(v)\setminus \{i,j\}$ we have $N(v')\sect \{i,j\}\neq \emptyset$. In fact, suppose that there exists $v_1\in N(v)$ such that $N(v_1)\sect \{i,j\}= \emptyset$. So $v_1\in V(C_k)$ for some $k$. If $v_1\in A$, then $v\in  N(A)\subseteq  N[A]$, a contradiction. Hence $v_1\notin A$.  Therefore, by the choice of $A$, there exists $v_2\in A$ such that $N(v_1)\sect N_0(e)\subseteq N(v_2)\sect  N_0(e)$. Since $v\in N(v_1)\sect  N_0(e)$ we have $v\in N(v_2)$ and so $v\in  N(A)\subseteq  N[A]$, a contradiction.  This shows that $O_{G''}(e)=\emptyset$.

 Suppose that $e$ is not  an isolated edge of $G''$, i.e., $ N_0(e)\sect V(G'')\neq\emptyset$. We observe that $e$ is not a branch in $G''$. Indeed,  if $e$ is a branch, then since  $e$ is not  isolated, there exists $v\in  N_0(e)\sect V(G'')$ such that degree of $v$ in $G''$ is one. We may assume that $v\in N(i)\setminus N(j)$. Since $e$ is not a branch in $G$ we have $N(v)\setminus \{i,j\}\neq \emptyset$ and for any $v'\in N(v)\setminus \{i,j\}$ we have $v'\in  N[A]$. This implies that  for any $v'\in N(v)$, $v'\in  N(A)$ because if $v'\in a$, then $v\in  N(A)\subseteq  N[A]$ and hence $v\notin V(G'')$, a contradiction. As seen in the previous paragraph, for any $v'\in N(v)\setminus \{i,j\}$ we have  $N(v')\sect \{i,j\}\neq \emptyset$. If $v'\notin N(i)$, then $v'\in N(j)$. Since $v\notin N(j)$ we will get the induced $4$-cycle with the vertices $i$, $v$, $v'$ and $j$, a contradiction. So $v'\in N(i)$ for any $v'\in N(v)$. It follows that $i$ is an isolated vertex in $G^{(v)}$, a contradiction. Thus $e$ is not a branch in $G''$.

Lemma~\ref{induced} implies that $G''$ is rigid and hence by Lemma~\ref{inducedrigid} it follows that $O_{G''}(e)\neq \emptyset$, a contradiction. So indeed $e$ is isolated in $G''$.

Now we prove that each vertex of a $3$-cycle of $G$  belongs to a leaf. Suppose that there exists $i\in [n]$, $i$ belongs to a $3$-cycle in $G$ and it does not belong to a leaf. So for all $v\in N(i)$ we have $\deg v\geq 2$.
Let $j$ and $k$ be  the two other vertices of this $3$-cycle. If $N(i)=\{j,k\}$, then $j$ and $k$ are isolated vertices of $G^{(i)}$,  contradicting  $(\alpha)$. So $N(i)\setminus \{j,k\}\neq \emptyset$.  Since each edge of the graph $G$ is a branch, for any $v\in N(i)$  the edge  $\{i, v\}$ is a branch. Since $i$ does not belong to a leaf it follows that any $v\in N(i)$ belongs to a leaf. Thus  for any $v\in N(i)$ there exists $i_v\in N(v)$ with $\deg i_v=1$.   Set $a=\bigcup_{v\in N(i)\setminus \{j,k\}}\{i_v\}$. Clearly, $A$ is an independent set of the vertices of $G$ and $j$, $k$ are two isolated vertices in $(G\setminus  N[A])^{(i)}$, a contradiction. Consequently, $i$ belongs to a leaf, as desired.
\end{proof}

 \begin{Corollary}
 \label{againchordal}
 Let $G$ be a chordal graph. Then $G$ is   rigid  if and only if each edge of $G$ is a branch and each vertex of a $3$-cycle of $G$  belongs to a leaf.
 \end{Corollary}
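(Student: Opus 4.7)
The plan is very short because this corollary is a direct specialization of Theorem~\ref{chordal}. First I would recall the definition: a graph $G$ is chordal if every cycle of length at least $4$ has a chord. Equivalently, $G$ contains no induced cycle of length $\geq 4$. In particular, $G$ contains no induced cycle of length $4$, $5$, or $6$.

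Consequently, every chordal graph satisfies the hypothesis of Theorem~\ref{chordal}. Applying that theorem verbatim yields the equivalence: $G$ is rigid if and only if each edge of $G$ is a branch and each vertex of a $3$-cycle of $G$ belongs to a leaf. There is no additional work, no auxiliary lemma is needed, and there is no main obstacle --- the corollary is essentially a relabeling of Theorem~\ref{chordal} under the stronger hypothesis of chordality.

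The only thing worth pointing out for the reader is why one might single out the chordal case at all, given that Theorem~\ref{chordal} is already stronger. The reason is that chordal graphs form a widely studied and easily recognizable class (with algorithmic characterizations such as perfect elimination orderings), so the corollary provides a clean combinatorial criterion for rigidity within a natural family of graphs. Thus the proof is simply a one-line deduction from Theorem~\ref{chordal}.
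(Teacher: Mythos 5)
Your proposal is correct and matches the paper's (implicit) argument exactly: a chordal graph has no induced cycle of length $\geq 4$, hence none of length $4$, $5$, or $6$, so Theorem~\ref{chordal} applies verbatim. The paper indeed treats this corollary as an immediate specialization and offers no further argument.
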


\begin{Corollary}
\label{forest}
Let $G$ be a graph on the vertex set $[n]$.
Suppose that  all cycles of $G$ have length $\geq7$ (which for example is the case when $G$ is a forest). Then $G$ is rigid  if and only if each edge of $G$ is a branch.
\end{Corollary}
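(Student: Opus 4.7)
The plan is to reduce the statement immediately to Theorem~\ref{chordal}. The hypothesis that all cycles of $G$ have length $\geq 7$ is stronger than the hypothesis of Theorem~\ref{chordal}: in particular, $G$ contains no induced cycle of length $4$, $5$, or $6$, because any such induced cycle would itself be a cycle of length less than $7$. Hence Theorem~\ref{chordal} applies, giving the characterization that $G$ is rigid if and only if each edge of $G$ is a branch and every vertex of a $3$-cycle belongs to a leaf.

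Next I would observe that the second condition in this characterization is vacuous. Indeed, a $3$-cycle is a cycle of length $3 < 7$, so by hypothesis $G$ contains no $3$-cycle at all. Therefore the quantifier ``each vertex of a $3$-cycle of $G$ belongs to a leaf'' ranges over the empty set and is automatically satisfied. This immediately yields the equivalence claimed in the corollary: $G$ is rigid if and only if each edge of $G$ is a branch.

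Finally, for the parenthetical remark about forests, I would just note that a forest contains no cycles whatsoever, so in particular all its cycles trivially have length $\geq 7$, and the hypothesis of the corollary applies. There is no real obstacle here; the entire argument is a direct specialization of Theorem~\ref{chordal}, and the only thing to check carefully is that ``all cycles have length $\geq 7$'' indeed forbids induced cycles of length $4$, $5$, $6$ as well as all $3$-cycles, which is automatic since an induced cycle is a cycle.
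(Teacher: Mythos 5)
Your proposal is correct and is essentially the paper's own argument: the paper likewise deduces the corollary from Theorem~\ref{chordal}, noting that the hypothesis on cycle lengths rules out induced $4$-, $5$- and $6$-cycles as well as all $3$-cycles, so the leaf condition is vacuous. You have merely spelled out these (easy) verifications in a bit more detail than the paper does.
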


\begin{proof}
Since $G$ does not contain any $3$-cycle, the statement follows from Theorem~\ref{chordal}.
\end{proof}

As another application we have
\begin{Corollary}
\label{rigidcycle}
Let $C$ be a cycle. Then $C$ is rigid if and only if  $C$ is a $4$-cycle or a $6$-cycle.
\end{Corollary}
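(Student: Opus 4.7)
The plan is to split the argument by the length $n$ of the cycle $C = C_n$, using the criterion that $C_n$ is rigid if and only if the combinatorial conditions $(\alpha)$ and $(\beta)$ hold for every independent set $A \subseteq V(C_n)$.

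First I would dispatch the easy cases. For $n \geq 7$ I invoke Corollary~\ref{forest} directly: every vertex of $C_n$ has degree $2$, so $C_n$ has no free vertex and therefore no leaf and no branch, so $C_n$ is not rigid. For $n = 3$, the triangle is chordal, so Corollary~\ref{againchordal} applies; each vertex of the $3$-cycle would need to sit on a leaf, but $C_3$ has no leaves, so $C_3$ is not rigid.

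Next I would handle $n = 5$ by exhibiting an independent set that violates $(\beta)$. Pick any vertex $v$ of $C_5$ and let $A = \{v\}$. Since $v$ has two neighbors in $C_5$ and there are five vertices in total, $C_5 \setminus N[v]$ consists of exactly the two vertices opposite to $v$, and these two are joined by the unique remaining edge of $C_5$. That edge is therefore isolated in $C_5 \setminus N[A]$, so $(\beta)$ fails and $C_5$ is not rigid.

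Finally, for the rigid cases $n = 4$ and $n = 6$, I would carry out a direct verification of $(\alpha)$ and $(\beta)$ for every independent set $A$. The independent sets of $C_4$ are $\emptyset$, the four singletons, and the two pairs $\{1,3\}, \{2,4\}$; those of $C_6$ are $\emptyset$, the six singletons, the non-adjacent pairs, and the two triples $\{1,3,5\}, \{2,4,6\}$. In each case the subgraph $C_n \setminus N[A]$ is either empty, a single vertex, a path of length at most $2$, or (only when $A = \emptyset$) $C_n$ itself. Condition $(\beta)$ is immediate in every case, since none of these graphs contains an isolated edge. For $(\alpha)$ the only nontrivial check is, for each vertex $i$ remaining in $C_n \setminus N[A]$, that the complement of the restriction of $C_n \setminus N[A]$ to the neighborhood of $i$ is connected; since that neighborhood has at most two vertices and any two vertices at cyclic distance $2$ in $C_4$ or $C_6$ are non-adjacent in $C_n$, this complement is either a single vertex or a single edge, hence connected.

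The main obstacle is not substantive; the whole argument is a short case analysis. The only mildly delicate point is the observation, for $n = 5$, that removing the closed neighborhood of one vertex leaves exactly an isolated edge---a parity phenomenon that distinguishes $C_5$ from $C_4$ and $C_6$ and that ultimately explains why the rigid cycles are precisely the even ones of length $4$ and $6$.
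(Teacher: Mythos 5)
Your proof is correct and follows essentially the same route as the paper: lengths $3$ and $\geq 7$ are dispatched via Theorem~\ref{chordal} (you use its corollaries), length $5$ is excluded by the same isolated-edge violation of $(\beta)$ with $A=\{v\}$, and lengths $4$ and $6$ are verified against $(\alpha)$ and $(\beta)$, the only difference being that the paper gets $(\alpha)$ for free from Proposition~\ref{sufficient}(a) (no $3$-cycles) while you check it by hand. The argument is sound as written.
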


\begin{proof}
Suppose that $|C|\neq 4,5,6$. Then by Theorem~\ref{chordal}, $C$ is not rigid.  Suppose now that $|C|=5$. Then $C\setminus  N[A]$ is an isolated edge, where  $A=\{i\}$ for some vertex $i$ of $C$, and hence the condition $(\beta)$ is not satisfied. So $C$ is not rigid also when $|C|=5$. In conclusion, $C$ is not rigid if $|C|\notin \{4,6\}$.

Next suppose that $|C|\in \{4,6\}$. Since $C$ does not contain a $3$-cycle, it follows that $C$ satisfies ($\alpha$) by Proposition~\ref{sufficient}(a).

 Note that for any  nonempty independent subset $A$ of $V(C)$, $C\setminus N[A]$ is either an empty graph (i.e., a graph containing no edge) or a path of length 3. Therefore,   the cycle $C$ also satisfies the condition ($\beta$). Hence $C$ is rigid.
\end{proof}

{\noindent \bf Acknowledgement:} We would like to express our deep gratitude to the referee for his/her careful reading and for his/her excellent advices, which improved the presentation of this paper considerably.


\begin{thebibliography}{}

\bibitem{AC2} K.~Altmann, J.A.~Christophersen,  \textit{Deforming Stanley-Reisner schemes}, Math.~Ann. {\bf 348}(3) (2010), 513--537.

\bibitem{AC1} K.~Altmann, J.A.~Christophersen, \textit{Cotangent cohomology of Stanley-Reisner rings}. Manuscripta Math. {\bf 115}(3) (2004), 361--378.


\bibitem{EHM}    V. Ene, J. Herzog, F. Mohammadi, \textit{Monomial ideals and
toric rings of Hibi type arising from a finite poset}, European Journal of Combinatorics
{\bf 32}(3) (2011),   404--421.

\bibitem{FGH} G.~Fl{\o}ystad, B.M.~Greve, J. Herzog, \textit{Letterplace and co-letterplace ideals of posets},
 http://arxiv.org/abs/1501.04523v1

\bibitem{Ei} D.~Eisenbud, \textit{Commutative Algebra with a View Toward Algebraic Geomtry}, Springer, 1994.


 \bibitem{H} J.~Herzog,  \textit{Deformation von Cohen-Macaulay Algebren,} Journal für die reine und angewandte Mathematik {\bf 318} (1980),  83--105.

\bibitem{HHBook}
J.~Herzog, and T.~Hibi, \textit{Monomial Ideals}, Graduate Text in Mathematics, Springer, 2011.

\bibitem{HH} J.~Herzog, T.~Hibi, \textit{Distributive lattices, bipartite graphs and Alexander duality},  J. Algebraic Combin. {\bf 22}(3) (2005), 289--302.


\bibitem{LS} S. Lichtenbaum, M. Schlessinger, \textit{The Cotangent Complex of a Morphism }, Trans.  Amer. Math. Soc. {\bf 128}(1) (1967), 41--70.

\bibitem{SBook}
     J.~ Stevens, \textit{Deformations of singularities}, Lecture Notes in Mathematics (1811), Berlin: Springer, 2003.



\end{thebibliography}
\end{document}